\documentclass[reqno,11pt, a4paper,toc=flat]{amsart} 
\usepackage{setspace}
\pagestyle{plain}
\usepackage[T1]{fontenc}
\usepackage{txfonts}

\usepackage[margin=3.0cm]{geometry}
\usepackage[utf8]{inputenc}
\usepackage[title]{appendix}
\usepackage{tikz}
\usepackage{tikz-cd}
\usepackage[default]{lato}
\usepackage[T1]{fontenc}   	
\usepackage{geometry}  
\usepackage{bold-extra}
\usefont{T1}{ptm}{b}{it}
\usepackage{graphicx}

\geometry{letterpaper}                   		 
\usepackage{graphicx}
\usepackage{amssymb}	
\usepackage{epsfig}	

\usetikzlibrary{positioning}
\usepackage[colorlinks=true,citecolor=blue,urlcolor=blue,linkcolor=blue]{hyperref}
\usepackage{graphicx}
\usepackage{url}

\usepackage{setspace}
\usepackage[T1]{fontenc}
\usepackage{txfonts}
\usepackage[margin=3.0cm]{geometry}
\textheight 27.7cm
\textwidth 20.0cm
\parindent 0.0cm
\parskip 0.3cm
\usepackage{epsfig}
\def\scfig #1 #2 {\resizebox{#2}{!}{\includegraphics{#1}}}
\usepackage{graphics}
\usepackage{color}

\setlength{\hoffset}{0mm} 
\setlength{\voffset}{5mm} 
\setlength{\textwidth}{35 cm}
\setlength{\textheight}{25cm}
					
\usepackage{color,soul}
\usepackage{color}
				
\usepackage{tikz-cd}					
\usepackage{tikz}
\usetikzlibrary{matrix}

\usepackage{amsmath,amsthm,amscd,amsfonts,amssymb,euscript,amsthm,amsfonts,amssymb,amscd}
\usepackage{graphics}
\usepackage{tikz-cd}
\usepackage{pdfpages}
\usepackage{mathtools}

\usepackage[widespace]{fourier}

\pagenumbering{arabic}
\usepackage{color}
\usepackage{hyperref} \hypersetup{colorlinks=true,linkcolor=blue, citecolor=blue} 
\usepackage{fancyhdr}
\usepackage{fullpage}
\usepackage{curves}
\linespread{1.45}
\sloppy
\parskip 0.15cm

\usepackage{pdfpages}
\usepackage{mathtools} 

\usepackage{frcursive}

\usepackage[all]{xypic}
\usepackage{latexsym}
\usepackage{color}
\theoremstyle{plain}

\pagestyle{plain}

\numberwithin{equation}{section}

\newcommand{\spec}{{\rm Spec}\,}

\newtheorem{thm}[subsection]{Theorem}
\newtheorem{prop}[subsection]{Proposition}

\newtheorem{lema}[subsection]{Lemma}
\newtheorem{coro}[subsection]{Corollary}
\theoremstyle{definition}
\newtheorem{rema}[subsection]{Remark}
\newtheorem{defe}[subsection]{Definition}

\begin{document}

\input{amssym.def}

\title{A Torelli Type theorem for Nodal curves} \author{Suratno Basu and Sourav Das}
\address{SRM University, AP}
\email{suratno.b@srmap.edu.in}
\address{TIFR, Mumbai}
\email{sdas@math.tifr.res.in}
\date{16.6.2021}
\maketitle

\begin{abstract}
The moduli space of Gieseker vector bundles is a compactification of moduli of vector bundles on a nodal curve.
This moduli space has only normal crossing singularity and it provides a flat degeneration.
We prove a Torelli type theorem for a nodal curve using the moduli space of stable Gieseker vector bundles of fixed rank (strictly greater than $1$) and
fixed degree such that rank and degree are co-prime.
\end{abstract}

\section{Introduction}
Let $X$ be a compact connected Riemann surface of genus $g_{_X}$. The classical Torelli theorem says that
the isomorphism class of $X$ is uniquely determined by the isomorphism class of the Jacobian $Pic^0(X)$,
together with its canonical principal polarisation $\Theta$.

Suppose $g_{_X}\geq 2$. Given a line bundle $L$ on $X$,
let $M_{_{n,L}}(X)$ denote the moduli space of semi-stable vector bundles $E$ over $X$ of rank $n$ with
fixed odd determinant $\wedge^nE = L$. The Torelli problem for the moduli space $M_{_{n,L}}(X)$- namely the question whether the
isomorphism class of $M_{_{n,L}}(X)$ determines the isomorphism class of $X$- was studied by several authors.
If $g_{_X}\geq 2$ and $\tt{g.c.d}(rank(E),deg(\wedge^nE))=1$ then the
the Torelli problem for $M_{_{n,L}}(X)$ was solved by Mumford-Newstead \cite{DP} and Tyurin \cite{A} (in the case of rank $2$)
and Narasimahn-Ramanan \cite{MS} (for any rank).
Later assuming the genus $g_{_X}\geq 3$ Kouduvakis-Pantev \cite[Theorem E]{AT} completely solved the Torelli problem from $M_{_{n,L}}(X)$
without assuming the condition $\tt{g.c.d}(rank(E),deg(\wedge^nE))=1$.
We can also formulate Torelli problems for the various moduli spaces of semi stable parabolic bundles with fixed determinant in similar manner. %In particular
%we can ask whether the isomorphism class of a moduli space of semi stable parabolic bundles with fixed parabolic type determines the isomorphism class of the
%curve $X$
In \cite{VIS}, Balaji et al. show that the isomorphism class of certain moduli space of
rank $2$ parabolic bundles with fixed determinant determines the isomorphism class of $X$ with marked points.
Recently, in \cite{DT II}, Alfaya and Gomez have generalised
this result for all rank and degree and for genus strictly greater than $3$.

In this article, we turn our attention to a Torelli type problem for a certain very interesting moduli space associated to a nodal curve, namely
the moduli space of stable Gieseker bundles. There are Torelli type theorems known for some moduli spaces associated to a nodal curve. In particular
in \cite{Y}, Namikawa and in \cite{J}, Carlson proved Torelli type theorem for curves with nodal singularity using the generalised Jacobian with an assumption
that the normalization of the nodal curve is not hyper-elliptic. In \cite{UI}, Bhosle and Biswas have proved a Torelli type theorem for a rational nodal curve
(i.e., arithmetic genus$=1$)
using the moduli of stable torsion-free sheaves. In \cite{S}, a Torelli type theorem was proved for certain moduli space of rank $2$ stable torsion-free sheaves
with fixed odd determinant over a reducible curve with two smooth components meeting at a node.
The moduli of semi-stable vector bundles over an irreducible nodal curve is not compact.
The moduli space of semi-stable Gieseker bundles provides a natural compactification of the moduli of semi-stable vector bundles on the nodal curve and
has been extensively studied
by several authors. The closed points of this moduli space correspond to (equivalence classes of) certain vector bundles (known as Gieseker bundles)
over certain semi-stable models of the nodal curve.
We ask the question whether this moduli space remembers the structure of the nodal curve, namely whether the isomorphism class this
moduli space determine the isomorphism class of the nodal curve. There is another approach to compactify the moduli of semi-stable vector bundles over an irreducible nodal curve
using semi-stable torsion-free sheaves.
However, there are some difficulties in proving Torelli type theorem using the moduli of semi-stable torsion-free sheaves over higher genus irreducible nodal curves.
As such, to the best of our knowledge, no Torelli type theorem is known for the moduli space of stable torsion-free sheaves of fixed rank and degree over
irreducible nodal curves.
In this paper, we obtain a Torelli type theorem for the moduli space of stable Gieseker vector bundles.
The main result of this paper is the following:

\begin{thm}
Let $Y$ and $Y'$ are two irreducible projective nodal curves (with single node) of genus $g\geq 2$ with single nodes $p$ and $p'$.
Let $\mathcal{M}_Y$ and $\mathcal{M}_{Y'}$ be the moduli space of stable Gieseker vector bundles of rank $2$ and
degree $d$ and $(2,d)=1$. If $\mathcal{M}_Y\cong \mathcal{M}_{Y'}$, then $Y\cong Y'$.

Suppose $Y$ and $Y'$ are curves of genus strictly greater than $3$. Let $\mathcal{M}_{_{Y,n,d}}$ and $\mathcal{M}_{_{Y',n,d}}$ be the moduli space of stable Gieseker vector bundles of rank $n$ and
degree $d$ and $(n,d)=1$. If $\mathcal{M}_{_{Y,n,d}}\cong \mathcal{M}_{_{Y',n,d}}$, then $Y\cong Y'$.
\end{thm}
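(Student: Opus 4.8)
The plan is to extract from $\mathcal{M}_Y$ a moduli space of parabolic bundles on the normalization of $Y$ at the two points lying over the node, and then to invoke the parabolic Torelli theorem. Write $\pi\colon\widetilde Y\to Y$ for the normalization, with $\pi^{-1}(p)=\{q_1,q_2\}$, and similarly for $Y'$. The structural input I would use --- a description of the Gieseker moduli space near its boundary, in the spirit of the constructions of Gieseker and of Nagaraj--Seshadri --- is that $\mathcal{M}_Y$ is a reduced projective variety whose singular locus $\partial\mathcal{M}_Y$ is precisely its normal-crossing locus, and that $\partial\mathcal{M}_Y$ parametrizes Gieseker bundles supported on the semistable model $\widetilde Y\cup_{q_1,q_2}\mathbb{P}^1$ obtained by inserting a single rational bridge at the node. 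Restricting such a bundle to $\widetilde Y$ and recording, at $q_1$ and $q_2$, the flag cut out there by the Harder--Narasimhan sub-bundle of the restriction to the bridge $\mathbb{P}^1$ exhibits $\partial\mathcal{M}_Y$ as the total space of a fibre bundle
\[
\rho\colon\partial\mathcal{M}_Y\longrightarrow\mathcal{N}_{\widetilde Y}
\]
with rational fibres (a $\mathbb{P}^1\times\mathbb{P}^1$-type surface when $n=2$), where $\mathcal{N}_{\widetilde Y}$ is a moduli space of rank $n$ parabolic bundles on $\widetilde Y$, of the degree induced by $d$, carrying a one-dimensional quasi-parabolic flag at each of $q_1$ and $q_2$ and a fixed admissible weight. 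Passing to the normalization $\nu\colon\widetilde{\mathcal{M}_Y}\to\mathcal{M}_Y$, which is smooth and projective, the preimage $\widehat\partial:=\nu^{-1}(\partial\mathcal{M}_Y)$ is a smooth divisor inheriting the fibre-bundle structure over $\mathcal{N}_{\widetilde Y}$, and $\mathcal{M}_Y$ is reconstructed from $\widetilde{\mathcal{M}_Y}$ by regluing along $\nu|_{\widehat\partial}$.

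First I would show that an isomorphism $\Phi\colon\mathcal{M}_Y\xrightarrow{\sim}\mathcal{M}_{Y'}$ induces an isomorphism $\mathcal{N}_{\widetilde Y}\cong\mathcal{N}_{\widetilde Y'}$. Since the singular locus and the normalization of a variety are intrinsic, $\Phi$ restricts to an isomorphism $\psi\colon\partial\mathcal{M}_Y\xrightarrow{\sim}\partial\mathcal{M}_{Y'}$ and lifts to $\widetilde\Phi\colon\widetilde{\mathcal{M}_Y}\xrightarrow{\sim}\widetilde{\mathcal{M}_{Y'}}$ carrying $\widehat\partial$ to $\widehat\partial{}'$. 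It remains to see that $\psi$ respects the fibrations $\rho$ and $\rho'$. For this I would characterize $\rho$ intrinsically, in either of two equivalent ways: as the contraction of a distinguished $K$-negative extremal face of the Mori cone of $\partial\mathcal{M}_Y$, whose defining family of minimal rational curves --- the ``bridge lines'' --- together with its variety of minimal rational tangents is canonically attached to $\partial\mathcal{M}_Y$; or in terms of the two conormal line bundles along $\partial\mathcal{M}_Y$ arising from the two local branches of $\mathcal{M}_Y$, a suitable tensor combination of which is the $\rho$-pullback of an ample class on $\mathcal{N}_{\widetilde Y}$ and so has Iitaka fibration $\rho$. Either description is preserved by $\psi$ (respectively by $\widetilde\Phi$), so $\psi$ descends to the sought isomorphism $\overline\psi\colon\mathcal{N}_{\widetilde Y}\xrightarrow{\sim}\mathcal{N}_{\widetilde Y'}$.

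Next I would apply the parabolic Torelli theorem --- \cite{VIS} in the rank $2$ case, which holds for $g\ge2$, and \cite{DT II} in the general case, which holds for $g>3$ --- to the parabolic moduli spaces $\mathcal{N}_{\widetilde Y}$ and $\mathcal{N}_{\widetilde Y'}$, possibly after cutting down along the canonical, Albanese-type determinant morphism to reduce to the fixed-determinant situation treated there. This yields an isomorphism of pointed curves $(\widetilde Y,\{q_1,q_2\})\cong(\widetilde Y',\{q_1',q_2'\})$, the pair of marked points being recovered as an unordered pair. Finally, an irreducible one-nodal curve is recovered from its normalization together with the two points over the node simply by identifying those two points transversally, so $Y\cong\widetilde Y/(q_1\sim q_2)$ canonically; hence the isomorphism of pointed normalizations descends to $Y\cong Y'$, which completes the proof.

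The step I expect to be the main obstacle is the middle one: proving that the abstract isomorphism $\psi$ of the (normalized) boundary divisors is compatible with the projections to the parabolic moduli spaces. Because moduli of parabolic bundles are themselves unirational and carry many rational curves, one cannot simply declare the fibres of $\rho$ to be the only rational curves present; the argument must genuinely isolate the bridge-line class by a numerical or VMRT characterization, or reconstruct $\rho$ from the intrinsic conormal data of the normal-crossing locus as above. One must also check that the flag type --- one step at two points --- and the weights produced by the Gieseker construction lie within the hypotheses of the cited parabolic Torelli theorems, and keep track of the harmless ordering ambiguity between $q_1$ and $q_2$. The remaining ingredients --- the boundary stratification of $\mathcal{M}_Y$ with its fibre-bundle structure over $\mathcal{N}_{\widetilde Y}$, and the identification $Y\cong\widetilde Y/(q_1\sim q_2)$ --- are routine.
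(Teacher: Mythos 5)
Your overall strategy --- pass to the singular locus, identify it with a moduli space of parabolic bundles on the normalization, and invoke the parabolic Torelli theorems of [VIS] and [DT II] --- is the right one, but your execution stops one level too early in the singular stratification, and this is exactly where the unresolved gap sits. You work with the first singular stratum $\partial\mathcal M_Y$ and try to present it as a fibre bundle with rational fibres over a parabolic moduli space $\mathcal N_{\widetilde Y}$ with one-step flags; you then need the induced isomorphism $\psi$ to respect the two fibrations, and you candidly admit that isolating the ``bridge lines'' by a VMRT or conormal-bundle argument is the main obstacle. That step is not carried out, so the proof is incomplete as written. Moreover the structural claim it rests on is imprecise: by the stratification (Proposition \ref{Strat}) the singular locus contains \emph{all} classes $(Y_k,\mathcal E)$ with $k\geq 1$, and even on the open stratum $k=1$ the splitting type $\mathcal O^{n-b}\oplus\mathcal O(1)^{b}$ of $\mathcal E$ on the bridge varies, so $\partial\mathcal M_Y$ does not fibre over a single parabolic moduli space with a fixed one-step flag type; the flag data extracted from the Harder--Narasimhan filtration on the bridge jumps across these pieces.

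The paper's proof avoids all of this by iterating the singular locus to the very bottom: $\mathcal M^0\supset\mathcal M^1\supset\dots\supset\mathcal M^n$, where $\mathcal M^{r+1}=\mathrm{Sing}(\mathcal M^r)$. The deepest stratum $\mathcal M^n$ is smooth and parametrizes Gieseker bundles on $Y_n$, where the restriction to every $\mathbb P^1$ in the chain is forced to be $\mathcal O^{n-1}\oplus\mathcal O(1)$ (Remark \ref{type}); propagating one-dimensional kernels along the chain then produces \emph{full} flags at $p_1,p_{n+1}$, and Theorem \ref{main1} shows $\mathcal M^n$ is \emph{isomorphic} to $FL_n(\mathcal U_{p_1})\times_{\mathcal M_{VB}}FL_n(\mathcal U_{p_2})$ --- an isomorphism, not a fibration, so there is nothing for the induced map to fail to preserve: any isomorphism $\mathcal M_Y\cong\mathcal M_{Y'}$ automatically restricts to an isomorphism of the deepest strata because singular loci are intrinsic. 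Your reduction to fixed determinant via the determinant morphism is essentially the paper's argument (they use simple connectedness of the fixed-determinant fibre to force $Det'\circ\Phi$ to be constant), and your final regluing step $Y\cong\widetilde Y/(q_1\sim q_2)$ is fine. To repair your proof you should either supply the extremal-contraction/VMRT argument in full, or, more economically, replace the first singular stratum by the most singular one and use the direct isomorphism with the full-flag parabolic moduli space.
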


The strategy of the proof is the following :\\
The moduli spaces $\mathcal{M}_Y$ and $\mathcal{M}_{Y'}$ have normal crossing singularity.
Any isomorphism between $\mathcal{M}_Y$ and $\mathcal{M}_{Y'}$ induces an isomorphism between their singular loci and
this isomorphism induces an isomorphism between the singular loci of the singular loci and so on.
Proceeding in this manner we get an isomorphism between the most singular loci of $\mathcal{M}_Y$ and $\mathcal{M}_{Y'}$.
Let us denote by $\pi:X\rightarrow Y$ and $\pi':X'\rightarrow Y'$
the normalizations of $Y$ and $Y'$ and let $\{p_1, p_2\}$ and $\{p'_1, p'_2\}$ be the pre-images of $p$ and $p'$
under the normalization maps.
Then we identify the most singular loci of $\mathcal{M}_Y$ and $\mathcal{M}_{Y'}$
with the moduli spaces of stable parabolic vector bundles on the normalizations $X$
and $X'$ equipped with full flags at $\{p_1, p_2\}$ and $\{p'_1, p'_2\}$.
Therefore we get an isomorphism between these moduli spaces of parabolic vector bundles.
This isomorphism further induces an isomorphism between the moduli spaces of parabolic vector bundles with fixed determinants.
Then using the Torelli theorem for moduli spaces of parabolic vector bundles, as obtained by \cite{VIS} and \cite{DT II}, we achieve an isomorphism $\phi:X\rightarrow X'$ which maps the divisor $\{p_1, p_2\}$ to $\{p'_1, p'_2\}$.
This induces an isomorphism between $Y$ and $Y'$.
In a recent preprint of \cite{SAI}, Basu et al. a Torelli type theorem for certain
Moduli spaces of rank $2$ Gieseker bundles with fixed odd determinant over a nodal curve is obtained with an
extra assumption that the normalization of the nodal curve is non-hyperelliptic. Their technique is different from this paper.

\subsubsection*{Acknowledgement} We thank Dr. Krishanu Dan for helpful discussions. We thank the referee for his/her valuable comments/suggestions
which helped us to improve the exposition of this article.

\section*{Notations and Conventions}

We fix some notations and conventions which will be used freely in this paper.

\begin{itemize}

\item All schemes will be defined over the complex numbers $\mathbb{C}$.

\item We will denote by $Y$ a nodal curve with a single node $p$ and by $q: X\rightarrow Y$ the normalization of $Y$.
We will denote by $Y_k$ a Gieseker curve of length $k$ (\ref{gis}) for $k\geq 1$ and $Y_0:=Y$.

\item We fix a pair $(n,d)$, of integers where $n > 0$ and $d$ arbitrary, where $n$ will be the rank and $d$ the degree of the bundles.
Throughout the paper we assume that $\text{g.c.d}(n,d) = 1$.
\item We say that a variety $X$ has normal crossing singularity at a point $x\in X$ if the analytic local ring $\widehat{\mathcal O}_{_{X,x}}\cong \frac{k[|x_1,\dots,x_{r+1},x_{r+2},\dots, x_n|]}{x_1\cdots x_{r+1}}$ for some positive integer $r$. Furthermore, if every point of $X$ satisfies the condition then we call $X$ to be a variety with normal crossing singularity.
\item Let $X$ be a smooth quasiprojective variety with a divisor $D$. We say that $D$ is a normal crossing divisor of $X$ if for any point $d\in D$, the divisor $D$ is defined by the equation $x_1\cdots x_r$ in the analytic local ring $\widehat{\mathcal O}_{_{X,d}}\cong \mathbb C[|x_1,\dots, x_r, x_{r+1,\dots, x_n}|]$, with respect to some choice of analytic local coordinates $\{x_1,\dots, x_r,x_{r+1},\dots, x_n\}$ of $X$ at the point $d$.

\end{itemize}

\section{Basic Definitions and results}

Let $Y$ be a nodal curve of genus $g$ with a single node $p$.
Let $q:X\rightarrow Y$ be the normalization map. Note $X$ is a smooth curve of genus $g-1$.
We denote the points in the inverse image of $p$ under the map $q$ by $p_{_1}$ and $p_{_2}$ and by $D$
the divisor $\{p_{_1},p_{_2}\}$.

\begin{defe}\label{gis}{Gieseker Curves:}
\begin{enumerate}
\item We call a scheme $R$ a chain of projective lines if
$R =\cup_{_{i=1}}^{^k} R_{_i}$, $R_{_i}\cong \mathbb{P}^{^1}, R_{_i}\cap R_{_j}$ (for distinct $i,j$) is a single point
if $|i-j|=1$ and otherwise empty. We call $k$ the length of $R$.

\vspace{1cm}
\begin{tikzpicture}[overlay, xshift= 1cm,scale=0.70]
\draw node[yshift=-1ex]{$p_1$}(0,0) -- (3,.5) node[yshift=1ex]{}; \draw (2,.5) -- (5,0)node[yshift=-1ex]{}; \draw (4,0) -- (7,.5); \draw[dotted] (7,.25) -- (8,.25);\draw (8,0) -- node[yshift=1.5ex]{}(11,.5); \draw (10,.5) -- (13,0)node[yshift=-1ex]{}; \draw (12,0) -- (15,.5) node[yshift=1ex]{$p_{k+1}$};
\end{tikzpicture}
\vspace{1cm}

\item A Gieseker curve $Y_{_k}$ is a union of $X$ with the chain $R$ of $\mathbb{P}^1$s of length $k$ such that $X\cap R=\{p_{_1}, p_{_{k+1}}\}$ and $p_{_1},p_{_{k+1}}$ are two smooth points in the extremal irreducible components of $R$. Set $Y_0:=Y$.
\end{enumerate}
\end{defe}

\begin{rema}\label{Rem111}
Note that there is a morphism $\pi_k: Y_k\rightarrow Y$ which is isomorphism outside $R$ and the image of $R$ is the node $p$.
\end{rema}

\begin{defe}{Family of Gieseker curves: }\label{Modi} Let $T$ be any scheme over $\mathbb C$.
Then a family of Gieseker curves over $T$ is a family of curves $\mathcal X_T$ with a morphism $\mathcal X_T\rightarrow Y\times T$
such that for all $t\in T$ the morphism $\mathcal X_{_{T,t}}\rightarrow Y\times t$ is same as $\pi_{_k}$
for some integer $k$. We also say that $ \mathcal X_T$ is a modification of $Y\times T$.
\end{defe}

\begin{defe}{Gieseker bundle:} \label{DEF}
\begin{enumerate}
\item Let $\mathcal E$ be a vector bundle of rank $n$ on $R$. One knows that
$\mathcal E|_{R_i} =\oplus_{j=1}^n \mathcal{O}(a_{ij} ), a_{ij} \in \mathbb{Z}$. We say that $\mathcal E$ is positive
if $a_{_{ij}}\geq 0$ for all $i,j$. We say that $\mathcal E$ is strictly positive if it is positive and for every $i$,
there is a $j$ such that $a_{ij}>0$. We say $\mathcal E$ is standard
if it is positive and $a_{ij}\leq 1$ for all $i, j$ and strictly standard if, moreover, it is strictly positive.
\item A Gieseker vector bundle $\mathcal E$ of rank $n$ is a vector bundle on $Y_k$ such that $\mathcal E|_{_R}$ is a strictly positive bundle and
${\pi_k}_*\mathcal E$ is a torsion-free sheaf on the nodal curve $Y$.
\item We call a vector bundle on a chain $R$ of $\mathbb{P}^1$'s a Gieseker vector bundle
if it is the restriction of a Gieseker vector bundle on some Gieseker curve whose chain of $\mathbb P^1$'s is $R$.

\end{enumerate}
\end{defe}

\begin{rema}\label{type}
For any Gieseker vector bundle $\mathcal E$ of rank $n$ over
$Y_k$, with $0\leq k\leq n$, we have $\pi_{k~~*}\mathcal{E}$ is a torsion free sheaf. This property is equivalent to the fact
that $H^0(R, \mathcal{E}|_{_R}(-p_{_1}-p_{_2}))=0$ (see \cite[Proposition 5]{DC}). Using this property
we can derive that if $\mathcal E$ is a Gieseker vector bundle of rank $n$ over $Y_n$ then
$\mathcal E|_{R_i}\simeq \mathcal{O}^{n-1}\oplus \mathcal{O}(1)$, $i=1,\cdots, n$ (see \cite[Proposition 5, (ii) and B]{DC}). Also notice that if $\mathcal E$ is a Gieseker vector bundle of rank $n$ on a Gieseker curve $Y_k$ then $0\leq k\leq n$.
\end{rema}

Let $T$ be any scheme. By a family of Gieseker vector bundles parametrised by $T$, we mean a vector bundle $\mathcal E_{T}$ over $\mathcal X_T$ such that $\mathcal E_T|_{\mathcal {X}_{T,t}}$ is a Gieseker vector bundle for each $t\in T$.

\begin{defe}\label{EqGie}
Two families of Gieseker vector bundles $\mathcal E_T$ and $\mathcal E'_T$ over two families of Gieseker curves $\mathcal{X}_T$ and $\mathcal{X}'_T$
respectively are said to be equivalent if there exists a $T$-isomorphism $\phi_T$ with
\begin{center}
\begin{tikzcd}
\mathcal{X}_T\arrow{rr}{\phi_T}\arrow{dr} && \mathcal{X}'_T \arrow{dl}\\
& Y\times T
\end{tikzcd}
\end{center}
commutative and a line bundle $L$ on $\mathcal{X}'_T$, pulled back from $T$ such that $\mathcal E_T\cong \phi_T^*(\mathcal E'_T\otimes L)$.
\end{defe}

\begin{defe}\label{stabilityDEF}
A Gieseker vector bundle $(Y_k, \mathcal E)$ is called stable if the torsion-free sheaf $(\pi_k)_* \mathcal E$ is stable.
\end{defe}

\begin{defe}\label{G}We define a functor $G$ of stable Gieseker vector bundles
(we drop $(n,d)$ from the notation):
$$
G: \tt{Sch}/\mathbb{C}\rightarrow \tt{Sets}
$$
\begin{equation}\label{Func201}
T\mapsto \left\{
\begin{array}{@{}ll@{}}
\text{equivalence classes of stable} \\
\text{Gieseker vector bundles of}\\
\text{ rank $r$ and degree $d$ over} T
\end{array}\right\}
\end{equation}
\end{defe}

\subsection{The moduli space of stable Gieseker vector bundles:}\label{Step 2}
There exists an irreducible, reduced projective scheme $G(n,d)$ which represents the functor $G$ (\ref{Func201}). Furthermore, $G(n,d)$ has normal crossing
singularities. We refer to $G(n,d)$ as the moduli space of stable Gieseker bundles of rank $n$ and degree $d$. The closed points of
$G(n,d)$ correspond to the equivalence classes of pairs $(Y_k, \mathcal E)$, where $Y_k$ is a Gieseker curve, $0\leq k\leq \text{rank}=n$ and $\mathcal E$ is a stable Gieseker
vector bundle on $Y_k$ of rank $n$ and degree $d$.

We shall very briefly summarize the main steps of the construction of $G(n,d)$. We refer to \cite[Proposition 6, Proposition 7]{DC} for the construction of the moduli space. We begin with recalling the definition of the Gieseker functor $\mathcal G$ for convenience.

Let us choose and fix an ample line bundle $\mathcal O_Y(1)$ on the nodal curve $Y$. Let us choose a large integer $m$, such that for any stable torsion-free sheaf $\mathcal F$ of rank $n$ and degree $d$ the sheaf $\mathcal F\otimes \mathcal O_Y(m)$ is globally generated and $H^1(Y, \mathcal F\otimes \mathcal O_Y(m))=0$ (It is possible because all the stable torsion-free sheaves of fixed rank and degree on $Y$ lies in a bounded family).

\begin{defe}\label{GieFunc}
Let $\mathcal{G} = \mathcal{G}(n, d)$ be the functor (called the Gieseker functor) defined as follows:

\begin{center}
$\mathcal G: \tt{Sch/\mathbb C}\rightarrow \tt{Sets}$
\end{center}

$\mathcal G(T)= ~~\text{set of closed subschemes} ~~\Delta\hookrightarrow Y\times T \times Gr(m,n)$ such that

\begin{enumerate}
\item the induced projection map $ \Delta\rightarrow T \times Gr(m, n)$ is a closed immersion. We denote by $\mathcal E$ the rank $n$ vector bundle on $\Delta$ which is the pull-back of the tautological rank $n$ quotient bundle on $Gr(m, n)$,
\item the projection $\Delta \rightarrow T$ is a flat family of curves $\Delta_t (t \in T)$ such that $\Delta_t$ is a curve of the form $Y_k$. Besides, the canonical map $\Delta_t \rightarrow Y$ is the map $\pi_k: Y_k(= \Delta_t) \rightarrow Y$ that we have been considering,
\item the vector bundle $\mathcal E_t$ on $\Delta_t$ is of degree $d$ (and rank $n$) with $d = m+ n(g- 1)$.

\item By the definition of $\mathcal E$, we get a quotient representation $\mathcal{O}^m_{_{\Delta_t}}\rightarrow \mathcal E_t$,
and we assume that this induces an isomorphism
$H^0(\mathcal{O}^m_{_{\Delta_t}})\xrightarrow{\cong} H^0(\mathcal E_t)$
In particular, dim $H^0(\mathcal E_t)=m$. It follows that
$H^1(\mathcal E_t) = 0$.
\end{enumerate}
\end{defe}

The main steps of the construction can be summarized as follows:

\begin{enumerate}
%\item[Step 1:] The closed points of the moduli space are pairs $(Y_k, E_k)$, where $Y_k$ is a Gieseker curve, $0\leq k\leq \text{rank}=r$ and $E$ is a Gieseker vector bundle on $Y_k$.
\item[Step 1:] It is shown in (\cite[Definition 5, Proposition 6]{DC}) that the Gieseker functor $\mathcal{G}$ is represented by
a $PGL(m)$-invariant open subscheme $\mathcal Y$ of the Hilbert scheme of curves $Hilb^P(Y\times Gr(m,n))$, where $P$ is the Hilbert polynomial of a Gieseker vector bundle of rank $n$ and degree $d$. Furthermore, $\mathcal{Y}$
is irreducible and has normal crossing singularities.

\item[Step 2:] Let $\mathcal R$ be a suitable Quot scheme for torsion-free sheaves of rank $n$ and degree $d$ on the nodal curve $Y$ (\cite[Page 179]{DC}), which contains all the stable torsion-free sheaves. There is a proper birational morphism $\theta: \mathcal Y\rightarrow \mathcal R$ (\cite[Proposition 10]{DC}).
Let $\mathcal R^{st}$ denote the open subscheme containing the stable torsion-free sheaves.
Consider $\mathcal Y^{st}:=\theta^{-1}(\mathcal R^{st})$. Then it can be seen that $\mathcal Y^{st}$ is a
$PGL(m)$-invariant subscheme and $PGL(m)$ acts freely on $\mathcal Y^{st}$.
Then it is shown in (\cite[Page 179,180]{DC}) that the quotient $\mathcal Y^{st}\parallelslant PGL(m)$ exists as a projective scheme and represents
the functor $G$ (\ref{G}). Since $PGL(m)$ acts freely on $\mathcal Y^{st}$ the quotient also acquires the normal crossing singularity.
\end{enumerate}

We recall the following definition from \cite[Appendix: Local theory, III]{DC}:

\begin{defe} We define a functor
$$
\mathcal{G}': \tt{Sch/\mathbb C}\rightarrow \tt{Sets}
$$
\begin{equation}
T\mapsto \left\{
\begin{array}{@{}ll@{}}
\text{ Isomorphism classes of }~~\text{Gieseker curves $\Delta\rightarrow T$ over}~~T
\end{array}\right\}
\end{equation}
\end{defe}

We shall now briefly outline the main steps of the proof of the fact that $\mathcal Y$ has normal crossing singularities.
\begin{enumerate}

\item{Step 1:} Let $\widehat{\mathcal G}'_k$ denote the functor of Artin local rings whose $T$ valued points, for any Artin local scheme $T$, is the set of isomorphism classes of Gieseker curves over $T$, whose closed fiber is isomorphic to the Gieseker curve $Y_k$. In \cite[Appendix: Local theory, IV]{DC}, it is shown that $\widehat{\mathcal{G}'_k}\cong \frac{k[[t_1,....,t_k]]}{t_1\dots t_k}$ i.e., there exists a miniversal effective family of Gieseker curves over $\frac{k[[t_1,....,t_k]]}{t_1\dots t_k}$. Here $t_i=0$ is the equation of the $i$-th node, i.e, in a local deformation where $t_i=0$ the $i$-th node is not deformed, it remains as a node. On the other hand, $t_i\neq 0$ implies otherwise.

\item{Step 2:} The natural transformation $\mathcal{G}\rightarrow \mathcal G'$ is formally smooth.
The Gieseker-functor $\mathcal G$ is represented by the variety $\mathcal Y$ (\cite[Proposition 6]{DC}).
Therefore, from step 1 it follows that $\mathcal Y$ is a variety with only normal crossing singularity.

\end{enumerate}

\subsection{The relative construction}\label{RelCons}

Let us choose a family of curves
\begin{equation}\label{Curve}
\mathcal X\rightarrow S
\end{equation}
over a discrete valuation ring $S$ such that
\begin{enumerate}
\item the generic fiber is isomorphic to a smooth projective curve
\item the special fiber is isomorphic to the nodal curve $Y$
\item the total space $\mathcal X$ is regular.
\end{enumerate}

The existence of such a family follows from \cite[Theorem B.2 and Corollary B.3, Appendix B]{M}.

We recall the following facts:
\begin{enumerate}
\item There exists a family of varieties $\mathcal Y^{\tt{st}}_S$ over $S$ such that the fiber over the closed point is isomorphic to $\mathcal Y^{\tt{st}}$ and it is a normal crossing divisor in $\mathcal Y^{\tt{st}}_S$ (\cite[Proposition 8]{DC}).
\item There exists a family of varieties $G(n,d)_S$ over $S$ such that the fiber over the closed point is isomorphic to $G(n,d)$ and it is a normal crossing divisor in $G(n,d)_S$. In fact, $G(n,d)_S$ is a GIT quotient (relative to $S$) of $\mathcal Y^{\tt{st}}_S$ by a free action of the group $PGL(m)$. We refer to \cite[Theorem 2]{DC} for further details.
\end{enumerate}

\section{A stratification of the moduli space of stable Gieseker vector bundles}

\begin{lema}\label{NormalCrossings}
Let $X$ be a smooth variety with a normal crossing divisor $X^0$. Consider the following stratification of $X^0$ by the successive singular loci i.e., the natural stratification
\begin{equation}
X^0\supset X^1\supset\dots\supset X^i\supset \dots \supset X^n\supset X^{n+1}=\emptyset
\end{equation}
where $X^{i+1}:=$ the singular locus of $X^i$, for each $0\leq i\leq n$.
Let us denote by $\pi: \tilde X^0\rightarrow X^0$ the normalization morphism. Then for every $0\leq i\leq n$,
\begin{equation}
X^i=\{x\in X^0~~| ~~\text{cardinality of the set}~~\pi^{-1}(x)\geq i+1\}
\end{equation}
\begin{equation}
\text{and}~~X^i\setminus X^{i+1}=\{x\in X~~| ~~\text{cardinality of the set}~~\pi^{-1}(x)= i+1\}
\end{equation}

Moreover, the variety $X^{i+1}$ is a Zariski-closed subvariety of $X^i$ of pure codimension $1$, if non-empty. The variety $X^n$ is a smooth variety of codimension $n$ in $X^0$. We call $X^n$ the most singular locus of $X^0$.
\end{lema}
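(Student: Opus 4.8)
The plan is to reduce the whole statement to an explicit computation in the analytic local model of a normal crossing divisor and then globalize, since both the formation of successive singular loci and the normalization are compatible with passing to analytic (or \'etale) neighbourhoods. So fix a point $x_0\in X^0$ and let $N=\dim X$. By definition of a normal crossing divisor we may choose analytic local coordinates $x_1,\dots,x_N$ of $X$ at $x_0$ so that $X^0$ is cut out near $x_0$ by $x_1\cdots x_r$, where $r$ is the number of analytic branches of $X^0$ at $x_0$. For $x$ in this neighbourhood put $S(x):=\{\,j\le r:\ x_j(x)=0\,\}$, and for $J\subseteq\{1,\dots,r\}$ write $V(x_J):=\bigcap_{j\in J}\{x_j=0\}$; then the analytic branches of $X^0$ through $x$ are exactly the $V(x_j)$ with $j\in S(x)$, so $X^0$ has $|S(x)|$ branches at $x$. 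Since the normalization of a reduced variety is the disjoint union of the normalizations of its irreducible components, and the normalization of such a component has, over $x$, exactly as many points as the number of analytic branches of that component at $x$, summing gives $|\pi^{-1}(x)|=|S(x)|$ for every $x$ in the neighbourhood (here one uses $\operatorname{im}(\pi)=X^0$).

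The core is the following local claim, proved by induction on $i$: in this neighbourhood
\begin{equation*}
X^i=\bigcup_{\substack{J\subseteq\{1,\dots,r\}\\ |J|=i+1}}V(x_J).
\end{equation*}
For $i=0$ this is just $X^0=V(x_1\cdots x_r)$. Assume it for $i$. Each $V(x_J)$ with $|J|=i+1$ is an irreducible component of this local model of $X^i$: they all have dimension $N-i-1$, and $V(x_J)\subseteq V(x_{J'})$ forces $J'\subseteq J$, hence $J'=J$ once $|J'|=|J|$. Consequently $x$ lies on at least two of these components precisely when $|S(x)|\ge i+2$. If $x$ lies on two distinct irreducible components of $X^i$, then $\mathcal O_{X^i,x}$ has at least two minimal primes, so it is not a domain and hence not regular, giving $x\in\operatorname{Sing}(X^i)$. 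Conversely, if $|S(x)|=i+1$, then every $V(x_J)$ with $J\not\subseteq S(x)$ avoids a neighbourhood of $x$ (it lies in some $\{x_{j_0}=0\}$ with $x_{j_0}(x)\ne 0$), so near $x$ the set $X^i$ is the single smooth sheet $V(x_{S(x)})$, and $x$ is a smooth point. Therefore $X^{i+1}=\operatorname{Sing}(X^i)=\{\,x:\ |S(x)|\ge i+2\,\}=\bigcup_{|J|=i+2}V(x_J)$, which completes the induction.

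Combining the two previous paragraphs, for every $i$ and every $x$ near $x_0$ one has $x\in X^i\iff|S(x)|\ge i+1\iff|\pi^{-1}(x)|\ge i+1$, and likewise $x\in X^i\setminus X^{i+1}\iff|\pi^{-1}(x)|=i+1$; since $x_0$ was arbitrary this proves the two displayed identities of the statement. The remaining assertions are read off from the local model: $X^{i+1}=\operatorname{Sing}(X^i)$ is Zariski-closed in $X^i$; locally $X^i$ is a union of coordinate subspaces of dimension $N-i-1$ and $X^{i+1}$ a union of coordinate subspaces of dimension $N-i-2$, each contained in one of the former, so $X^{i+1}$ has pure codimension $1$ in $X^i$ whenever non-empty, and $X^i$ has codimension $i$ in $X^0$; finally, since $X^{n+1}=\emptyset$, the variety $X^n$ has empty singular locus and is closed, hence smooth, and of codimension $n$ in $X^0$ by the same local description.

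I expect the main obstacle to be the inductive step identifying $X^i$ with the arrangement of coordinate subspaces: for $i\ge 1$ the variety $X^i$ is no longer a normal crossing divisor, so the hypothesis cannot be re-applied and one must argue directly, via the two elementary facts used above — a point on two distinct irreducible components of a variety is singular, and a point lying (locally) on a single smooth sheet of a finite union of smooth subvarieties is smooth — while checking that the $V(x_J)$ genuinely are the local irreducible components. The secondary point requiring care is the identity $|\pi^{-1}(x)|=|S(x)|$, which relies on the decomposition of the normalization of a reducible variety through its components and on identifying fibre cardinality of a normalization with the number of analytic branches.
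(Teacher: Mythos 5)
Your proof is correct. Note, however, that the paper does not actually write out an argument here: its entire proof is the single line ``follows from \cite[Lemma 2.1]{Br}'', deferring to Conrad's handout. What you have produced is a self-contained version of essentially that argument: pass to the analytic local model $V(x_1\cdots x_r)$, identify $|\pi^{-1}(x)|$ with the number of analytic branches $|S(x)|$ (using that normalization commutes with completion / decomposes over components), and show by induction that $X^i$ is locally the union of the coordinate subspaces $V(x_J)$ with $|J|=i+1$, with the singular locus detected by the dichotomy ``two or more local components through $x$ $\Rightarrow$ non-domain local ring $\Rightarrow$ singular'' versus ``one smooth sheet $\Rightarrow$ smooth point''. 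The two points you flag as delicate are exactly the right ones, and both are handled correctly: the $V(x_J)$ really are the local irreducible components (equidimensional, pairwise incomparable), and singularity/smoothness can be tested on the completed local ring, so the analytic-local computation of $\operatorname{Sing}(X^i)$ is legitimate even though for $i\geq 1$ the stratum is no longer a normal crossing divisor and the hypothesis cannot be reapplied. The only thing your write-up buys beyond the paper is independence from the external reference; conversely, the citation route is shorter but leaves the reader to check that Conrad's statement (formulated for the strata of a normal crossings scheme) covers all the assertions of the lemma, in particular the codimension and smoothness claims, which your local model reads off directly.
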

\begin{proof}
The proof follows from \cite[Lemma 2.1]{Br}.
\end{proof}

Let us fix a family of curves $\mathcal X$ over a discrete valuation ring $S$ as in subsection \ref{RelCons}. As we have discussed in the same subsection, there exists a family of varieties $G(n,d)_S$ over $S$ such that the fiber over the closed point is isomorphic to $G(n,d)$ and it is a normal crossing divisor in $G(n,d)_S$. Therefore, using Lemma \ref{NormalCrossings}, we see that the natural stratification of the moduli of stable Gieseker vector bundles
\[
\mathcal M^0:=G(n,d)\supset \mathcal M^1\supset \dots\supset \mathcal M^n \supset \mathcal M^{n+1}:=\emptyset,
\]
where $\mathcal M^{r+1}:=$the singular locus of $\mathcal M^r$ is for every $0\leq r\leq n$ has the following property:
\[
\mathcal M^i=\{x\in \mathcal M^0~~| ~~\text{cardinality of the set }~~\pi^{-1}(x)\geq i+1\}~~\text{ for every }~~0\leq i\leq n.
\]

\begin{prop}\label{Strat}
The closed points of $\mathcal M^r$ correspond to the equivalence classes of stable Gieseker bundles $(Y_k, \mathcal E)$, where $n\geq k\geq r$. In particular, $\mathcal M^n$ is a smooth projective variety of dimension $n^2(g-1)+1-n$, whose closed points correspond to the equivalence classes of stable Gieseker vector bundles $(Y_n, \mathcal E)$ of rank $n$ and degree $d$.
\begin{proof}%(\textsf{of the proposition \ref{Strat}})
Let us first recall that the moduli space $\mathcal M^0:=G(n,d)$ is a GIT quotient of $\mathcal Y^{\tt{st}}$ by the free action of $PGL(m)$ (subsection \ref{Step 2}). Moreover, $\mathcal Y^{\tt{st}}$ is a normal crossing divisor in the family of varieties $\mathcal Y^{\tt{st}}_S$ (subsection \ref{RelCons}). Therefore, from lemma \ref{NormalCrossings}, it follows that the stratification of $\mathcal{Y^{\tt{st}}}$ by its successive singular loci:
\begin{equation}
\mathcal{Y}^{0,\tt{st}}=\mathcal Y^{\tt{st}}\supset \dots \supset \mathcal{Y}^{n,\tt{st}}\supset \mathcal{Y}^{n+1,\tt{st}}:=\emptyset,
\end{equation}

has the property that $\mathcal Y^{i+1,\tt{st}}$ is a closed subvariety of $\mathcal Y^{i,\tt{st}}$ of pure codimension $1$, if non-empty and
\begin{equation}\label{Final}
\mathcal Y^{i,\tt{st}}=\{y\in \mathcal Y^{\tt{st}}| ~~\text{cardinality of }~~\pi^{-1}(y)\geq i+1\},
\end{equation}
where $\pi: \tilde{\mathcal Y^{\tt{st}}}\rightarrow \mathcal Y^{\tt{st}}$ is the normalization morphism.

Now since the variety $\mathcal Y$ represents the functor $\mathcal G$ there is a universal curve $\Delta \hookrightarrow Y\times \mathcal Y\times Gr(m,n)$ and
the singular locus of the projection $\Delta\rightarrow \mathcal Y$ is a scheme defined by the vanishing of the first fitting ideal of $\Omega^1_{_{\Delta/\mathcal Y}}$ and in fact it is the normalization of $\mathcal Y$(\cite[proof of Theorem 4.9]{I}).
Moreover, a $S$-valued point of $\tilde{\mathcal Y}$ is a $S$-valued point of $\mathcal Y$ i.e.,
$\Delta \hookrightarrow Y \times S\times Gr(m,n)$ plus a section $S \rightarrow \Delta$ of the projection $p : \Delta \rightarrow S$,
meeting $\Delta$ in the singular locus of the morphism $p$ (\cite[proof of Theorem 4.9]{I}).
It follows that a fibre of the morphism $\pi: \tilde{\mathcal Y}\rightarrow \mathcal Y$ over a given point $y:=(Y_k, \mathcal E)$ of $\mathcal Y$ is the set of triples $\{(Y_k, \mathcal E, \tilde{y})\}$, where $\tilde{y}$ is a marked node of $Y_k$. Notice that the set $\pi^{-1}(Y_k, \mathcal E)$ has cardinality $k+1$, since $Y_k$ has exactly $k+1$ nodes. Since $\mathcal Y^{\tt{st}}$ is an open subset of $\mathcal Y$ the normalization of $\mathcal Y^{\tt{st}}$ is just the preimage of it under the normalization $\tilde{\mathcal Y}\rightarrow \mathcal Y$. Therefore from equation \ref{Final}, it follows that a closed point of $\mathcal Y^{k,\tt{st}}$ corresponds to an element $(Y_i, \mathcal E)\in \mathcal Y^{\tt{st}}$, where $i\geq k$. Since the length $k$ of the Gieseker curve $Y_k$ must be bounded by the rank (remark \ref{type}) therefore it follows that $\mathcal Y^{k,\tt{st}}=\emptyset$ for all $k>n$. In particular, $\mathcal Y^{n,\tt{st}}$ is a smooth quasi-projective variety.

From this description it is also clear that the closed subvarieties $\mathcal Y^{k,\tt{st}}$ are also stable under the action of $PGL(m)$. Now since the good quotient $\mathcal{M}^0:=\mathcal{Y}^{0,\tt{st}}\parallelslant PGL(m)$ exists and $\mathcal Y^{0,\tt{st}}\rightarrow \mathcal M^0$ is a principal $PGL(m)$ bundle,
the good quotients $\mathcal{M}^k:=\mathcal{Y}^{k,\tt{st}}\parallelslant PGL(m)$ also exist. Moreover,
the singular locus of $\mathcal{M}^k$ is $\mathcal{M}^{k+1}$ and the closed points of $M^k$ correspond to the equivalence classes of stable Gieseker bundles $(Y_i, \mathcal E)$, where $i\geq k$.
In particular, $\mathcal M^n$ is a smooth projective variety whose closed points are equivalence classes of stable Gieseker bundles $(Y_n, \mathcal E)$.
\end{proof}

\end{prop}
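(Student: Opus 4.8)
The plan is to first prove the assertion for the Hilbert-scheme model $\mathcal Y^{\tt{st}}$ and then transport it across the free $PGL(m)$-action down to $\mathcal M^0=G(n,d)$. Recall from subsection~\ref{RelCons} that $\mathcal Y^{\tt{st}}$ sits as a normal crossing divisor in the smooth total space $\mathcal Y^{\tt{st}}_S$, so Lemma~\ref{NormalCrossings} applies verbatim to the pair $(\mathcal Y^{\tt{st}}_S,\mathcal Y^{\tt{st}})$. It produces the stratification $\mathcal Y^{\tt{st}}=\mathcal Y^{0,\tt{st}}\supset\mathcal Y^{1,\tt{st}}\supset\cdots$ by successive singular loci, with each $\mathcal Y^{i+1,\tt{st}}$ of pure codimension one in $\mathcal Y^{i,\tt{st}}$ when nonempty, the bottom stratum smooth, and -- crucially -- the branch-counting description $\mathcal Y^{i,\tt{st}}=\{y:\#\pi^{-1}(y)\ge i+1\}$, where $\pi\colon\widetilde{\mathcal Y^{\tt{st}}}\to\mathcal Y^{\tt{st}}$ is the normalization. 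Everything then comes down to computing these fibres modularly.

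For that, I would identify the normalization itself. Since $\mathcal Y$ represents the Gieseker functor $\mathcal G$, it carries a universal curve $\Delta\hookrightarrow Y\times\mathcal Y\times Gr(m,n)$, and by the argument of \cite[proof of Theorem~4.9]{I} the normalization $\widetilde{\mathcal Y}$ is the relative singular locus of $\Delta\to\mathcal Y$, i.e. the closed subscheme cut out by the first Fitting ideal of $\Omega^1_{\Delta/\mathcal Y}$, whose points are pairs consisting of a point of $\mathcal Y$ together with a section of $\Delta$ meeting the relative singular locus. Over a closed point $(Y_k,\mathcal E)$ such a section is precisely the choice of one of the $k+1$ nodes of $Y_k$, so $\#\pi^{-1}(Y_k,\mathcal E)=k+1$. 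Restricting to the open locus $\mathcal Y^{\tt{st}}$, whose normalization is the preimage of $\mathcal Y^{\tt{st}}$ in $\widetilde{\mathcal Y}$, the branch-counting description gives that a closed point of $\mathcal Y^{k,\tt{st}}$ is an $(Y_i,\mathcal E)$ with $i\ge k$; since $0\le k\le n$ for every rank-$n$ Gieseker bundle (Remark~\ref{type}), this forces $\mathcal Y^{k,\tt{st}}=\emptyset$ for $k>n$ and makes $\mathcal Y^{n,\tt{st}}$ the smooth bottom stratum whose points are the $(Y_n,\mathcal E)$.

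Finally I would descend to the quotient and count dimensions. The loci $\mathcal Y^{k,\tt{st}}$ are $PGL(m)$-invariant, as the group changes only the quotient presentation $\mathcal O^m\twoheadrightarrow\mathcal E$ and not the underlying $(Y_k,\mathcal E)$, so their good quotients $\mathcal M^k$ exist; and because $\mathcal Y^{0,\tt{st}}\to\mathcal M^0$ is a principal $PGL(m)$-bundle, forming singular loci commutes with passing to the quotient, whence $\mathcal M^{k+1}=\mathrm{Sing}(\mathcal M^k)$ and $\mathcal M^k$ is exactly the locus of classes $(Y_i,\mathcal E)$ with $i\ge k$, with $\mathcal M^n$ smooth and projective. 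For the dimension, $\mathcal M^0=G(n,d)$ is birational to the moduli of stable rank-$n$ degree-$d$ bundles on the genus-$g$ nodal curve $Y$ and so has dimension $n^2(g-1)+1$, while each step of the stratification drops dimension by exactly one by the pure-codimension-one clause of Lemma~\ref{NormalCrossings}, giving $\dim\mathcal M^n=n^2(g-1)+1-n$. The main obstacle is the middle step: one must be certain that the normalization of the Hilbert-scheme model is precisely ``Gieseker bundle plus a marked node'', which rests on the deformation-theoretic Fitting-ideal description of $\widetilde{\mathcal Y}$ and on matching the number of local analytic branches at a point with the number of nodes of the corresponding curve $Y_k$, and then on checking that this identification is compatible with restricting to the stable locus and with forming the GIT quotient.
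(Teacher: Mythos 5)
Your proposal follows the paper's own proof essentially step for step: apply Lemma~\ref{NormalCrossings} to the normal crossing divisor $\mathcal Y^{\tt{st}}\subset\mathcal Y^{\tt{st}}_S$, identify the normalization of $\mathcal Y$ with the relative singular locus of the universal curve via the Fitting-ideal description from \cite[proof of Theorem 4.9]{I} so that branches over $(Y_k,\mathcal E)$ correspond to the $k+1$ marked nodes, and then descend through the free $PGL(m)$-action. The only addition is your explicit dimension count at the end, which the paper's proof leaves implicit; the argument is correct and the approach is the same.
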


%To prove the proposition we need the following lemma about a smooth variety with a normal crossing divisor.

\section{The parabolic vector bundle associated to a Gieseker vector bundle}
In this section, we will identify the "most singular locus" of $\mathcal{M}^0$ with the moduli space of certain vector bundles
with full-flag parabolic structures. We need the following basic lemma. %It must be well known but we could not find
%any direct and complete reference of it.

\begin{lema}\label{Sec}
Let us fix an integer $n\geq 2$. Let $T$ be a smooth variety and $\mathcal{X}_T$ be a modification (\ref{Modi}) of $Y\times T$ over $T$
such that for all $t\in T$ the fibre $\mathcal{X}_{T,t}$ is isomorphic to the fixed curve $Y_n$. Then
\begin{enumerate}
\item Let $X\rightarrow Y$ be the normalization. Then $X\times T$ is the
the normalization of $Y\times T$ and it is an irreducible component of $	\mathcal{X}_T$
\item $\mathcal{X}_T$ has only normal crossing singularity and also every irreducible component of $\mathcal{X}_T$ is smooth and flat over $T$
\item there are $n+1$ disjoint sections $P_1,...,P_{n+1}$ of $\mathcal X_T\rightarrow T$ which are
the nodal loci of $ \mathcal{X}_T$ and the singular loci of
the morphism $ \mathcal{X}_T\rightarrow T$.
\item $\mathcal{X}_T$ has $n+1$ smooth irreducible components.
There is one component $\mathcal{X}^0$ which is the normalization of $Y\times T$
and therefore isomorphic to $X\times T$. There are $n$ other irreducible
components $ \mathcal{X}^1,\dots, \mathcal{X}	^n$ each of
which is a $ \mathbb{P}^1$-bundle over $T$.
Also $\mathcal{X}^0\cap\mathcal{X}^1$, $\mathcal{X}^0\cap\mathcal{X}^n$
and $\mathcal{X}^i\cap\mathcal{X}^{i+1}$ for $1\leq i\leq n-1$ are the nodal loci of $\mathcal{X}_T$ and
they are all isomorphic to $T$ under the projection map to $T$(follows from ($3$)).
\end{enumerate}
\end{lema}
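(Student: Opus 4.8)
The plan is to derive the whole statement from the local deformation theory of Gieseker curves recalled in Subsection~\ref{Step 2}. The key preliminary fact, which I would establish first, is that a family $\mathcal X_T$ all of whose fibres equal the fixed curve $Y_n$ is \emph{étale locally trivial} over $T$. To see this, fix $t_0\in T$ and set $\widehat{\mathcal O}:=\widehat{\mathcal O}_{T,t_0}$, a complete regular local domain since $T$ is smooth. By the local theory of \cite[Appendix]{DC} there is a miniversal effective family of Gieseker curves with closed fibre $Y_n$ over $B=\operatorname{Spec}k[[t_1,\dots,t_n]]/(t_1\cdots t_n)$; the locus in $B$ over which the fibre is still isomorphic to $Y_n$ is exactly the closed point $V(t_1,\dots,t_n)$, because $t_i\neq 0$ deforms, hence contracts, the $i$-th node and produces some $Y_j$ with $j<n$. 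Versality yields a morphism $\varphi\colon\operatorname{Spec}\widehat{\mathcal O}\to B$ pulling the miniversal family back to $\mathcal X_T\times_T\operatorname{Spec}\widehat{\mathcal O}$; were $\varphi^{\sharp}(t_i)\neq 0$ for some $i$, then at a point of $T$ near $t_0$ where this function does not vanish the fibre of $\mathcal X_T$ would fail to be $Y_n$, a contradiction. Hence $\varphi$ factors through the closed point of $B$ and $\mathcal X_T\times_T\operatorname{Spec}\widehat{\mathcal O}\cong Y_n\times\operatorname{Spec}\widehat{\mathcal O}$; by Artin approximation this gives an étale neighbourhood $U\to T$ of $t_0$ with $\mathcal X_T\times_T U\cong Y_n\times U$ over $U$.

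Granting this, $\mathcal X_T$ has only normal crossing singularities, since étale locally it is $Y_n\times U$ with $U$ smooth and $Y_n$ nodal; this is the first half of (2). Next I would single out the unique irreducible component $\mathcal X^0$ of $\mathcal X_T$ dominating $Y\times T$. Étale locally $\mathcal X^0$ is the single irreducible piece $X\times U$ of $Y_n\times U$, so $\mathcal X^0$ is smooth; moreover $\mathcal X^0\to Y\times T$ is proper and birational (it is an isomorphism over the dense open $(Y\smallsetminus p)\times T$), so $\mathcal X^0$ is the normalisation of $Y\times T$. As $X\times T$ is normal, being smooth over $\mathbb C$, and finite and birational over $Y\times T$, we conclude $\mathcal X^0\cong X\times T$, which is (1) together with the first assertion of (4).

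Now consider the singular locus $\Sigma$ of the morphism $\mathcal X_T\to T$: étale locally it is the $n+1$ nodes of $Y_n$ times $U$, so $\Sigma\to T$ is finite étale of degree $n+1$; likewise the irreducible components of $\mathcal X_T$ form a finite étale cover $\Gamma\to T$ of degree $n+1$, whose monodromy acts through automorphisms of the dual graph of $Y_n$, an $(n+1)$-cycle on the vertices $X,R_1,\dots,R_n$. This monodromy fixes the vertex $X$, because $\mathcal X^0$ is intrinsically characterised; and under the isomorphism $\mathcal X^0\cong X\times T$ the two edges of the cycle incident to $X$ correspond to the two distinct sections $\{p_1\}\times T$ and $\{p_2\}\times T$, so the monodromy fixes those edges as well. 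Since the only automorphism of a cycle fixing a vertex together with an incident edge is the identity, the monodromy is trivial, and both covers split. We therefore obtain $n+1$ pairwise disjoint sections $P_1,\dots,P_{n+1}$ of $\mathcal X_T\to T$ cutting out $\Sigma$ --- pairwise disjoint because the $n+1$ nodes of each fibre are distinct --- and $n+1$ irreducible components, each of which is étale locally a product and hence smooth and flat over $T$, completing (2). Labelling these components $\mathcal X^0,\mathcal X^1,\dots,\mathcal X^n$ around the cycle, starting from the edge over $p_1$, one gets $P_1=\mathcal X^0\cap\mathcal X^1$, $P_{i+1}=\mathcal X^i\cap\mathcal X^{i+1}$ for $1\leq i\leq n-1$, and $P_{n+1}=\mathcal X^n\cap\mathcal X^0$; this is (3).

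Finally, for $1\leq i\leq n$ the component $\mathcal X^i\to T$ is smooth and proper with every geometric fibre isomorphic to $\mathbb P^1$ (namely the component $R_i$ of $Y_n$) and carries the two disjoint sections $P_i,P_{i+1}$; a smooth proper morphism with $\mathbb P^1$-fibres admitting a section is a Zariski-locally trivial $\mathbb P^1$-bundle, in fact of the form $\mathbb P(\mathcal O_T\oplus L_i)$ for a line bundle $L_i$ on $T$. The intersection loci $\mathcal X^0\cap\mathcal X^1$, $\mathcal X^i\cap\mathcal X^{i+1}$ for $1\le i\le n-1$, and $\mathcal X^n\cap\mathcal X^0$ are by construction the sections $P_j$, each isomorphic to $T$ under the projection, which is the last assertion of (4). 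The step that carries the weight is the first paragraph: one has to read off from \cite[Appendix]{DC} the precise shape of the miniversal base and the fact that the stratum parametrising $Y_n$ is a reduced point, and then upgrade the resulting formal triviality to an étale-local statement over the smooth base $T$. Once that is done, the combinatorial rigidity of the $(n+1)$-cycle dual graph of $Y_n$ forces everything else.
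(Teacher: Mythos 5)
Your proof is correct and shares its essential pivot with the paper's: because every fibre of $\mathcal X_T\to T$ is $Y_n$, no node is ever smoothed, so in Schlessinger's local description $\widehat{\mathcal O}_{\mathcal X_T,N_i}\cong \widehat{\mathcal O}_{T,t}[[x,y]]/(xy-a_i)$ the equation $a_i$ of each node must vanish. The two arguments then diverge. The paper stays with this local computation (which already gives normal crossings and flatness of the components) and imports two external results for the rest: Teissier's simultaneous normalization \cite{B} --- applicable since $T$ is smooth and the fibres have constant geometric genus --- to get smoothness of the irreducible components, and \cite{SP} for the $n+1$ disjoint nodal sections. You instead upgrade the local statement to \'etale-local triviality $\mathcal X_T\times_TU\cong Y_n\times U$ via the miniversal family of \cite{DC} and Artin approximation, after which smoothness of the components, the $\mathbb P^1$-bundle structure, and the existence of the sections all follow from the rigidity of the $(n+1)$-cycle dual graph of $Y_n$: the monodromy of the component- and node-covers fixes the vertex $X$ and the two distinguished sections $\{p_1\}\times T$, $\{p_2\}\times T$ over the node of $Y$, hence is trivial, so both covers split. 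Your route is more self-contained and yields a stronger intermediate statement; the paper's is shorter because it delegates the analytic content to \cite{B} and \cite{SP}. The one step you should tighten is the deduction $\varphi^{\sharp}(t_i)=0$: an element of $\widehat{\mathcal O}_{T,t_0}$ is only a formal function, so ``a point of $T$ near $t_0$ where it does not vanish'' is not literally available; argue instead on Artinian quotients or at the generic point of $T$ (where the fibre is still $Y_n$ with all $n+1$ nodes rational over $K(T)$, because of the modification structure). The paper's own proof is no more explicit at the corresponding step, so this is a presentational point rather than a gap.
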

\begin{proof}
Suppose $\eta=\spec~K$ be the generic point of $T$. Then $\mathcal{X}_{\eta}$ is isomorphic to the curve $Y_n$ defined over $K$.
Therefore using flatness of the map $\mathcal{X}_{T}\rightarrow T$, we see that $\mathcal{X}_{T}$ has $n+1$ irreducible components.
We denote them by $\mathcal{X}_0,\dots, \mathcal{X}_n$.

Let $t$ be a closed point in $T$ and let $p_i$ be a node in $\mathcal{X}_{t}$.
Then we see that we have a flat deformation of the node $p_i$ given by $\mathcal{X}_{T}$
over the analytic local ring $\widehat{\mathcal{O}}_{_{T,t}}$ of $T$ at $t$.

From Schessinger's theory it follows that given any flat deformation $Z$ of $Y_n$ over $Spec ~A$
(where $A$ is an Artinian $k$-algebra), there are $a_1,....,a_n\in A$ so that at the $i$-th node $N_i=R_i\cap R_{i+1}$ we have $\widehat{\mathcal{O}}_{_{Z,N_i}}\cong \frac{A[|x,y|]}{(xy-a_i)}$. The $a_i$ are determined upto a unit, so we refer to $a_i=0$ as the equation of the $i$-th node.

Using this it is easy to see that the analytic local ring $\widehat{\mathcal{O}}_{_{\mathcal{X}_{T},N_i}}=\frac{\widehat{\mathcal{O}}_{_{T,t}}[|x,y|]}{xy}$ and therefore it is formally smooth over $\frac{\mathbb{C}[|x,y|]}{xy}$, because $T$ is assumed to be smooth. Hence we conclude that the singularities of $\mathcal{X}_{T}$ are normal crossing. Therefore the irreducible components of $\mathcal{X}_{T}$ are flat over $T$ i.e., the morphisms $\mathcal{X}^i\rightarrow T$ are flat. Since these morphisms are also proper therefore they are faithfully flat.

Since $T$ is smooth and all fibres of $\mathcal{X}_T\rightarrow T$ have the same geometric genus we can apply
\cite[Theorem 1.3.2]{B} to normalise simultaneously the fibres of the map $\mathcal{X}_T\rightarrow T$.
We obtain a commutative diagram:

\begin{center}
\begin{tikzcd}
\tilde{\mathcal{X}_T}\arrow{rr}\arrow{dr} && \mathcal{X}_T \arrow{dl}\\
& T
\end{tikzcd}
\end{center}

where $\tilde{\mathcal{X}_T}\rightarrow T$ is a smooth projective family of curves and for each $k$-rational point $t \in T$
the induced morphism $\tilde{\mathcal{X}_{T,t}}\rightarrow \mathcal{X}_{T,t}$ is the normalization for all $t\in T$.
It follows that $\tilde{\mathcal{X}_T}$ is smooth and it is the normalization of $\mathcal{X}_T$.
Moreover $\tilde{\mathcal{X}_T}$ is the disjoint union of the irreducible components $\mathcal{X}^i$ of $\mathcal{X}_T$
and $\mathcal{X}^i\rightarrow T$ is smooth for all $i$. This proves (2).

Let $\mathcal{X}^0$ be the irreducible component whose fibre over every point is $X$. We have proved that $\mathcal{X}^0$ is smooth.
Therefore we conclude that $\mathcal{X}^0$ is the normalization of $Y\times T$ and hence it is isomorphic to $X\times T$.
This proves (1).

Now consider again the curve $\mathcal{X}_{\eta}$. It has $n+1$ distinct nodes $p_1,....p_{n+1}$. Using \cite[Lemma 50.20.1,
Section 30.10]{SP} and the fact that every fibre of $\mathcal{X}_T$ has $n+1$ distinct nodes we conclude $(3)$ and $(4)$.
\end{proof}

\subsection{Construction of the flag:}
Throughout this subsection, we assume $T$ is a smooth variety.
Let $\mathcal{R}_T$ be a family of smooth curves of genus $0$ over $T$ i.e.,
$\pi:\mathcal{R}_T\rightarrow T$ is flat and every fibre is isomorphic to $\mathbb{P} ^1$.
Also assume that we are given two disjoint sections $p_1$ and $p_2$.
Let $\mathcal{E}$ be a vector bundle on $\mathcal{R}_T$ such that restriction to every fibre is Gieseker vector bundle (Definition\ref{DEF}).
Given any sub-sheaf $V_{_{1}}$ of $\mathcal{E}_{_{p_1}}$ we will produce a natural sub-sheaf of $\mathcal{E}_{p_2}$.

Let us consider the following diagram:

\begin{equation}\label{4.1}
\begin{tikzcd}
& R^0\pi_*\mathcal{E}\arrow{dl}{q_1}\arrow{dr}{q_2}\\
\mathcal{E}_{p_1} && \mathcal{E}_{p_2}
\end{tikzcd}
\end{equation}
By checking at each geometric point $t\in T$, we see that the two arrows are surjective
(this is because the vector bundle $\mathcal{E}_{_t}$ is strictly standard $\forall t\in T$).

Consider the sub-sheaves $q_2(q_1^{-1}(V_{_1}))$ and $q_1(q_2^{-1}(V_{_2}))$ of $\mathcal{E}_{p_2}$ and $\mathcal{E}_{p_1}$
respectively. If $V_1$ is a saturated subsheaf of $\mathcal E_{p_1}$ then $q_2(q_1^{-1})(V_1)$
is also saturated subsheaf of $\mathcal E_{p_2}$. Similarly, if $V_2$ is a saturated subsheaf of $\mathcal E_{p_2}$
then $q_1(q_2^{-1})(V_2)$
is also saturated subsheaf of $\mathcal E_{p_1}$.

\begin{lema}\label{Int}
Let $\mathcal R$ be a chain of $\mathbb{P}^1$'s of length $n$ (definition \ref{gis}). Let $\mathcal E$ be a Gieseker vector bundle on $\mathcal R$.

Consider the following diagram:

\begin{equation}
\begin{tikzcd}
& H^0(R_{i+j}, \mathcal{E}|_{_{R_{i+j}}})\arrow{dl}{q_{i+j}}\arrow{dr}{q_{i+j+1}}\\
\mathcal{E}_{p_{i+j}} && \mathcal{E}_{p_{i+j+1}}
\end{tikzcd}
\end{equation}

Let us define $V_{i,i+j}:=q_{i+j-1}q_{i+j}^{-1}(V_{i,i+j-1})$, which is a subspace of $\mathcal E_{p_{i+j}}$, where $V_{i,i}=0$ is the $0$ subspace of $\mathcal E_{p_{i}}$.

Then $q_{i+j}^{-1}(V_{i,i+j})\cap q_{i+j+1}^{-1}(0)=0$.
\end{lema}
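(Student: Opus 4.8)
The plan is to prove this by contradiction, reducing it to the one property that characterises a Gieseker bundle on a chain: by Definition \ref{DEF}(3) together with the torsion--freeness criterion of Remark \ref{type}, a Gieseker bundle on a chain of length $n$ admits no nonzero section vanishing at its two extremal marked points, i.e. $H^0(\mathcal R,\mathcal E|_{\mathcal R}(-p_1-p_{n+1}))=0$, where $p_1\in R_1$ and $p_{n+1}\in R_n$. So I would take a nonzero $s\in H^0(R_{i+j},\mathcal E|_{R_{i+j}})$ with $s(p_{i+j})\in V_{i,i+j}$ and $s(p_{i+j+1})=0$, and build from it a global section of $\mathcal E|_{\mathcal R}$ that vanishes at $p_1$ and at $p_{n+1}$; such a section must vanish identically, forcing $s=0$.

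First I would unwind the recursive definition of $V_{i,i+j}$: by construction $V_{i,m+1}$ is precisely the set of values at $p_{m+1}$ of sections of $\mathcal E$ over the component $R_m$ whose value at $p_m$ lies in $V_{i,m}$. Peeling this off from $m=i+j-1$ down to $m=i$, the hypothesis $s(p_{i+j})\in V_{i,i+j}$ yields sections $t_m\in H^0(R_m,\mathcal E|_{R_m})$ for $i\le m\le i+j-1$ with $t_{i+j-1}(p_{i+j})=s(p_{i+j})$, with $t_m(p_{m+1})=t_{m+1}(p_{m+1})$ for $i\le m\le i+j-2$, and with $t_i(p_i)\in V_{i,i}=\{0\}$. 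Since $t_i,\dots,t_{i+j-1},s$ agree pairwise at the nodes $p_{i+1},\dots,p_{i+j}$ they share, they patch to a section $\sigma$ of $\mathcal E$ over the subchain $R_i\cup\dots\cup R_{i+j}$ with $\sigma|_{R_{i+j}}=s$, $\sigma(p_i)=0$ and $\sigma(p_{i+j+1})=0$. Because $\sigma$ vanishes at the two boundary nodes $p_i$ and $p_{i+j+1}$, it glues with the zero section on $R_1\cup\dots\cup R_{i-1}$ and on $R_{i+j+1}\cup\dots\cup R_n$ (the extension being vacuous when $i=1$ or $i+j=n$) to a global $\widehat\sigma\in H^0(\mathcal R,\mathcal E|_{\mathcal R})$ with $\widehat\sigma(p_1)=\widehat\sigma(p_{n+1})=0$. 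Hence $\widehat\sigma\in H^0(\mathcal R,\mathcal E|_{\mathcal R}(-p_1-p_{n+1}))=0$, so $s=\widehat\sigma|_{R_{i+j}}=0$, the desired contradiction.

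I expect the only delicate point to be the index bookkeeping in these two steps rather than any hard geometry. One must check carefully that the sections $t_m$ extracted from the recursion really do agree at all the intermediate nodes $p_{i+1},\dots,p_{i+j}$ --- this is exactly how the chain $V_{i,i}\subset V_{i,i+1}\subset\cdots$ was manufactured --- and, crucially, that the glued section $\sigma$ vanishes at \emph{both} outer nodes $p_i$ and $p_{i+j+1}$, since it is these two vanishings that legitimise the extension by zero and place $\widehat\sigma$ inside $H^0(\mathcal R,\mathcal E|_{\mathcal R}(-p_1-p_{n+1}))$. Everything else --- patching compatible sections on the components of a nodal chain, and the vanishing of that $H^0$ --- is either routine or already granted.
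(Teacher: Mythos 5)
Your argument is correct. The paper offers no written proof of this lemma beyond a citation to ``admissibility'' (\cite[Proposition 5]{DC}, \cite[Definition 3.2]{I}), and what you have done is exactly unwind that citation: admissibility of a Gieseker bundle on the chain is the vanishing $H^0(\mathcal R,\mathcal E|_{\mathcal R}(-p_{1}-p_{n+1}))=0$ recorded in Remark \ref{type}, and your construction --- peeling the recursion back to sections $t_i,\dots,t_{i+j-1}$ with $t_i(p_i)\in V_{i,i}=0$, patching them with $s$ along the intermediate nodes, and extending by zero to the rest of the chain so that the result lies in $H^0(\mathcal R,\mathcal E|_{\mathcal R}(-p_1-p_{n+1}))$ --- is the right reduction to that vanishing; the boundary cases $i=1$ and $i+j=n$ are handled correctly. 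Two minor points: you have silently (and correctly) read the recursion as $V_{i,m+1}=q_{m+1}\bigl(q_m^{-1}(V_{i,m})\bigr)$, i.e.\ with the index order of Proposition \ref{flag1} rather than the transposed order printed in the statement of the lemma, which as written does not typecheck; and it is worth saying explicitly that a section of $\mathcal E|_{\mathcal R}$ is the same thing as a tuple of sections on the components $R_1,\dots,R_n$ agreeing at the nodes, which is what legitimises both the patching and the extension by zero. With those remarks, your proof is a complete substitute for the paper's citation.
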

\begin{proof}
Follows from the definition of admissibility (\cite[Proposition 5]{DC}, \cite[Definition 3.2]{I}).
\end{proof}
\begin{lema}\label{Lem b}
Let $\mathcal{R}_T$ be a family of smooth curves of genus $0$ over a smooth variety $T$ i.e.,
$\pi:\mathcal{R}_T\rightarrow T$ is flat and every fibre is isomorphic to $\mathbb{P} ^1$.
Let $p_1$ and $p_2$ be two disjoint sections of $\pi$.
Let $\mathcal{E}$ be a vector bundle on $\mathcal{R}_T$ such that restriction to every fibre is Gieseker vector bundle.
Let $V_{_1}$ be a subbundle of $\mathcal E_{_{p_{_1}}}$
such that $q_{_1}^{-1}(V_1)\cap q_{_2}^{-1}(0)=(0)$. Then $q_{_2}(q_{_1}^{-1})(V_{_1})$ is a
subbundle of $\mathcal E_{_{p_{_2}}}$.
\end{lema}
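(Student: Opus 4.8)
The plan is to run the argument fibrewise over $T$ and to show that the quotient sheaf $q_2(q_1^{-1}(V_1))$ on the section $p_2(T)\cong T$ has locally free quotient, which is exactly the condition for it to be a subbundle. First I would recall the setup of \eqref{4.1}: since every fibre $\mathcal E_t$ is strictly standard, both $q_1$ and $q_2$ are surjective, and $R^1\pi_*\mathcal E=0$ so $R^0\pi_*\mathcal E$ is locally free of rank $\dim H^0(\mathbb P^1,\mathcal E_t)$ by cohomology and base change; in fact $R^0\pi_*\mathcal E\to \mathcal E_{p_1}$ and $R^0\pi_*\mathcal E\to\mathcal E_{p_2}$ commute with base change to any point $t\in T$. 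The kernel of $q_1$ is a subbundle of $R^0\pi_*\mathcal E$ (its formation commutes with base change because $q_1$ is surjective with locally free target and source), hence $q_1^{-1}(V_1)$ is a subbundle of $R^0\pi_*\mathcal E$ once $V_1\subset\mathcal E_{p_1}$ is a subbundle. So the only issue is the image under $q_2$.

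The key point is the hypothesis $q_1^{-1}(V_1)\cap q_2^{-1}(0)=(0)$, i.e. $\ker q_2\cap q_1^{-1}(V_1)=0$ as sheaves on $T$. I would argue that this fibrewise injectivity, together with the fact that all the sheaves involved are locally free and their formation commutes with base change, forces $q_2|_{q_1^{-1}(V_1)}\colon q_1^{-1}(V_1)\to \mathcal E_{p_2}$ to be injective \emph{on every fibre}; a map of vector bundles on $T$ that is injective on every fibre is a subbundle inclusion, so its image $q_2(q_1^{-1}(V_1))$ is a subbundle of $\mathcal E_{p_2}$ with locally free quotient. Concretely: $q_1^{-1}(V_1)$ is a subbundle of $R^0\pi_*\mathcal E$, its restriction to a fibre $t$ is $q_{1,t}^{-1}(V_{1,t})$ (base change), and on that fibre $q_{2,t}$ restricted to it is injective precisely by the hypothesis read at $t$. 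Hence the rank of the image of $q_2|_{q_1^{-1}(V_1)}$ is constant in $t$, equal to $\operatorname{rk}V_1$, and by the constant-rank criterion the image is a subbundle.

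The step I expect to be the main obstacle is the base-change bookkeeping: one must be careful that $\ker q_2$, $q_1^{-1}(V_1)$, and their intersection all commute with restriction to a point, so that the abstract sheaf-theoretic hypothesis genuinely translates into fibrewise injectivity of a single bundle map. This is where $R^1\pi_*\mathcal E=0$ and the surjectivity of $q_1,q_2$ (coming from strict standardness of $\mathcal E_t$) are essential — they guarantee $\ker q_1$ and $\ker q_2$ are subbundles whose formation is compatible with base change, so that $q_1^{-1}(V_1)$ is a subbundle and $(q_1^{-1}(V_1))_t=q_{1,t}^{-1}(V_{1,t})$. Once this compatibility is in place, the statement reduces to the elementary fact that a morphism of vector bundles over a reduced (indeed smooth) base which is injective on each fibre has image a subbundle, and the proof concludes. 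I would also remark that the symmetric statement, with the roles of $p_1$ and $p_2$ interchanged, follows by the same argument.
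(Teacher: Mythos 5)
Your proposal is correct and follows essentially the same route as the paper: both identify $q_1^{-1}(V_1)$ as a subbundle of $R^0\pi_*\mathcal E$ and use the trivial intersection with $q_2^{-1}(0)$ to conclude that the image under $q_2$ is a subbundle. The only cosmetic difference is that the paper finishes by exhibiting the quotient $\mathcal E_{p_2}/q_2(q_1^{-1}(V_1))\cong R^0\pi_*\mathcal E/\bigl(q_1^{-1}(V_1)\oplus q_2^{-1}(0)\bigr)$ as a vector bundle, whereas you invoke fibrewise injectivity and the constant-rank criterion.
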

\begin{proof}
Since $V_{_1}$ is a subbundle of $\mathcal E_{_{p_1}}$, $q_{_1}^{-1}(V_{_1})$ is a subbundle of $R^0\pi_*\mathcal E$.
From Lemma \ref{Int}, it follows that $q_{_1}^{-1}(V_{_1})\cap q_{_2}^{-1}(0)=0$.
So $q_1^{-1}(V_1)\oplus q_2^{-1}(0)$ is a subbundle of $R^0\pi_*\mathcal E$.
Therefore $q_2(q_1^{-1}(V_1)) $ is a subbundle of $\mathcal E_{p_2}$,
because $\frac{\mathcal E_{p_2}}{q_2(q_1^{-1}(V_1))}\cong \frac{R^0\pi_*\mathcal E}{q_1^{-1}(V_1)\oplus q_2^{-1}(0)}$,
which is of course a vector bundle.
\end{proof}
\begin{rema}\label{Inj}
To compute the rank of the subbundles $q_2(q_1^{-1}(V_1))$ and $q_1(q_2^{-1}(V_2))$ it is enough to compute them for each $t\in T$.
Therefore let us assume that we have a Gieseker vector bundle on $\mathbb{P}^1$
and we have two subspaces $V_1\subset \mathcal E_{p_1}$ and $V_2\subset \mathcal E_{p_2}$.
Then the ranks or dimension computation of $q_2(q_1^{-1}(V_1))$ and $q_1(q_2^{-1}(V_2))$ follows from \cite[Lemma 1, (ii)]{DC}.
For instance, if $\mathcal{E}\cong \mathcal{O}^{\oplus a }\oplus \mathcal{O}(1)^{\oplus b } $ and
$ V_{_{1 }}\subseteq \mathcal{O}_{_{p_1}}^{\oplus a }$ or $V_{_{2}}\subseteq \mathcal{O}_{_{p_2}}^{\oplus a }$
then the sub-spaces $q_2(q_1^{-1}(V_1))$ and $q_1(q_2^{-1}(V_2))$ are of rank $\text{dim}~V_{_1}+b$ or $\text{dim}~V_{_2}+b$
accordingly. More generally, if the projections
$ V_{_{1 }}\rightarrow \mathcal{O}_{_{p_1}}^{\oplus a }$ or $V_{_{2}}\rightarrow \mathcal{O}_{_{p_2}}^{\oplus a }$
are injective then the sub-spaces are of rank $\text{dim}~V_{_1}+b$ or $\text{dim}~V_{_2}+b$ accordingly.
\end{rema}

\begin{prop}\label{flag1}
Let $T$ be a smooth variety. Let $\mathcal X_T$ be a family of Gieseker curves such that
for every $t\in T$ the fibre $\mathcal X_{_{T,t}}\cong Y_n$ for some positive integer $n$.
Then given any Gieseker vector bundle $\mathcal E$ of rank $n$ over $\mathcal X_T$
there is a natural family of vector bundle over $X\times T$ with full-flag parabolic structures
along the two sections $p_1$ and $p_{n+1}$ of the projection morphism $X\times T\rightarrow T$.
\end{prop}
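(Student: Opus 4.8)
The plan is to use Lemma \ref{Sec} to decompose the Gieseker curve family and then repeatedly apply the sheaf-theoretic constructions of Lemma \ref{Lem b} and Remark \ref{Inj} to propagate a flag along the chain of $\mathbb{P}^1$s. By Lemma \ref{Sec}, since $T$ is smooth and every fibre of $\mathcal X_T\to T$ is isomorphic to $Y_n$, the total space $\mathcal X_T$ has $n+1$ smooth irreducible components: the distinguished component $\mathcal X^0\cong X\times T$ (the normalization of $Y\times T$), and $\mathbb{P}^1$-bundles $\mathcal X^1,\dots,\mathcal X^n$ over $T$ forming a chain, glued along $n+1$ disjoint sections $P_1,\dots,P_{n+1}$. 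The sections $P_1$ (where $\mathcal X^0$ meets $\mathcal X^1$) and $P_{n+1}$ (where $\mathcal X^0$ meets $\mathcal X^n$) are identified, via the isomorphism $\mathcal X^0\cong X\times T$, with the two sections $p_1\times T$ and $p_{n+1}\times T$ of $X\times T\to T$ (here $p_1,p_{n+1}$ are the two points of $X$ over the node $p$, cf.\ the curve $Y_n$ in Definition \ref{gis}). The output vector bundle on $X\times T$ will simply be $\mathcal E|_{\mathcal X^0}$; the content is the construction of the two full flags in the fibres $(\mathcal E|_{\mathcal X^0})_{p_1}$ and $(\mathcal E|_{\mathcal X^0})_{p_{n+1}}$.

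Next I would build the flag at one end, say along $P_{n+1}$, by walking through the chain. Let $\mathcal R_T^{(i)}\to T$ denote the $\mathbb{P}^1$-bundle $\mathcal X^i$ with its two gluing sections $P_i, P_{i+1}$ (for $1\le i\le n$, with $P_{n+1}$ closing up against $\mathcal X^0$), and let $\mathcal E^{(i)}:=\mathcal E|_{\mathcal X^i}$. By Remark \ref{type}, each $\mathcal E^{(i)}$ restricted to a fibre is $\mathcal O^{n-1}\oplus\mathcal O(1)$, i.e.\ strictly standard, so the evaluation maps $q_i^{(i)}\colon R^0\pi_*\mathcal E^{(i)}\to \mathcal E^{(i)}_{P_i}$ and $q_{i+1}^{(i)}\colon R^0\pi_*\mathcal E^{(i)}\to \mathcal E^{(i)}_{P_{i+1}}$ of diagram \eqref{4.1} are surjective. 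Starting from the $0$ subspace $V_{1,1}=0$ in the fibre over $P_1$ (identified with the end $p_1$), I define inductively $V_{1,i+1}:=q_{i+1}^{(i)}\bigl((q_i^{(i)})^{-1}(V_{1,i})\bigr)$ as a subsheaf of $\mathcal E_{P_{i+1}}$. By Lemma \ref{Int} the hypothesis $(q_i^{(i)})^{-1}(V_{1,i})\cap (q_{i+1}^{(i)})^{-1}(0)=0$ holds at each $i$, so Lemma \ref{Lem b} guarantees that each $V_{1,i+1}$ is a \emph{subbundle} of $\mathcal E_{P_{i+1}}$; and by Remark \ref{Inj}, since the subspace on which we start injects into the $\mathcal O^{n-1}$-part and the $\mathcal O(1)$-summand contributes exactly $1$ to the rank at each step, the rank jumps by exactly $1$ as we cross each component, so $\operatorname{rk} V_{1,i}=i-1$. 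After $n$ steps we arrive at a subbundle of $\mathcal E_{P_{n+1}}=(\mathcal E|_{\mathcal X^0})_{p_{n+1}}$ of rank $n-1$. To get the whole flag rather than just its top step, I run the same process but starting from $V_{1,j}=0$ placed after traversing only components $\mathcal X^1,\dots,\mathcal X^{j-1}$; equivalently, the partial compositions $q_{n+1}\circ (q_n)^{-1}\circ\cdots\circ q_{j+1}\circ (q_j)^{-1}(0)$ for $j=1,\dots,n+1$ give a nested chain $0=F_0\subset F_1\subset\cdots\subset F_n=\mathcal E_{P_{n+1}}$ of subbundles with $\operatorname{rk}F_i=i$; nestedness follows because enlarging the starting subspace enlarges the image. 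Running the mirror-image argument from $P_{n+1}$ backward along $\mathcal X^n,\dots,\mathcal X^1$ to $P_1$ produces the full flag in $\mathcal E_{P_1}=(\mathcal E|_{\mathcal X^0})_{p_1}$.

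Finally I would check naturality and functoriality. Naturality in $T$ is immediate: all the operations used — $R^0\pi_*$, pullback of fibrewise-surjective evaluation maps, preimage and image of subbundles — commute with base change $T'\to T$ because the relevant higher direct images vanish on each fibre (again by strict standardness and Remark \ref{type}, $H^1$ of each $\mathcal E^{(i)}$ on a fibre vanishes), so $R^0\pi_*$ is compatible with base change and the flags pull back correctly. Naturality under the equivalence relation of Definition \ref{EqGie} is also routine: an isomorphism $\phi_T\colon\mathcal X_T\xrightarrow{\sim}\mathcal X_T'$ over $Y\times T$ must carry $\mathcal X^0$ to $\mathcal X'^0$ (it is the unique component dominating $Y$) and the chain to the chain preserving the gluing sections, while tensoring by a line bundle pulled back from $T$ does not change any of the subbundles constructed; hence the parabolic bundle on $X\times T$ is well-defined up to the obvious twist. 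The main obstacle I anticipate is not any single step but the bookkeeping in the inductive rank computation: one must verify at each crossing that the subspace being carried across lands in the $\mathcal O^{n-1}$-part of the next component (so that Remark \ref{Inj}'s injectivity hypothesis applies and the rank increments by exactly one and never stalls), which is precisely where the admissibility/strict-standardness from Remark \ref{type} and Lemma \ref{Int} must be invoked carefully and uniformly along the whole chain.
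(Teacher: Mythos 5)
Your proposal follows essentially the same route as the paper: decompose $\mathcal X_T$ via Lemma \ref{Sec}, transport zero subspaces started at each intermediate node across the chain by the maps $q_{j+1}q_j^{-1}$, and invoke Lemma \ref{Int}, Lemma \ref{Lem b} and Remark \ref{Inj} to conclude that the results are nested subbundles whose ranks increase by one at each crossing, yielding the two full flags (pulled back to $\mathcal E|_{X\times T}$ at $p_1$ and $p_{n+1}$). The only slip is the claim that after $n$ steps the transported subbundle has rank $n-1$: by your own formula $\operatorname{rk}V_{1,i}=i-1$ it has rank $n$ at $P_{n+1}$, i.e.\ it is the top step of the flag, exactly as in the paper's $G_1=W_{p_{n+1}}$.
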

\begin{proof}
We have shown that $\mathcal X_T=X\times T\cup R_T$, where $R_T$ is a family of chains of $\mathbb{P}^1$'s parametrised by $T$ (Lemma \ref{Sec}). Given a Gieseker vector bundle $\mathcal E$ over $\mathcal X_T$, restrict the vector bundle on $X\times T$ and denote it by $V$.
Let us denote the restriction $\mathcal E|_{_{R_T}}$ by $W$. Then the vector bundle $\mathcal E$ is represented by a quadruple $(V, W, \phi_1, \phi_{n+1})$, where $\phi_1:V_{p_1}\xrightarrow{\cong} W_{p_1}, \phi_{n+1}:V_{p_{n+1}}\xrightarrow{\cong} W_{p_{n+1}}$. From Lemma \ref{Sec}, it follows that there are other $n-1$ number of
disjoint sections $p_2,\dots, p_{n}$ of the morphism $\mathcal{R}_T\rightarrow T$. Moreover, $R_T=\cup_{i=1}^{i=n}R_i$, where $R_i$ is a family of smooth rational curves over $T$ and $R_i\cap R_{i+1}=p_{i+1}$.

We will show that there are two full-flags of subbundles $F_{\bullet}$ and
$G_{\bullet}$ of the vector bundles $W_{p_1}$ and $W_{p_{n+1}}$ respectively. The inverse images of $\phi_1^{-1}(F_{\bullet})$ and
$\phi_{n+1}^{-1}(G_{\bullet})$ are the desired flags of $V_{p_1}$ and $V_{p_{n+1}}$ respectively.

Let us first construct the flag $G_{\bullet}$. Consider the following diagram :

\begin{equation}
\begin{tikzcd}
& R^0\pi_*W|_{_{R_{i}}}\arrow{dl}{q_{i}}\arrow{dr}{q_{i+1}}\\
W_{p_{i}} && W_{p_{i+1}}
\end{tikzcd}
\end{equation}

where $\pi$ is the restriction of the morphism $\pi: R_T\rightarrow T$ to $R_i$.

For $1\leq i\leq j\leq n$, we define $V_{i, j+1}:=q_{j+1}q_{j}^{-1}(V_{i,j})$ inductively with the initial condition that $V_{i,i}=$ the $0$ subbundle of $W_{p_i}$. Set $G_i:=V_{i,n+1}$. From Lemma \ref{Int}, \ref{Lem b} and Remark \ref{Inj},
it follows that $G_i$ is a subbundle of $W_{p_{n+1}}$ and $0\subset G_n\subset \dots\subset G_{2}\subset G_1=W_{p_{n+1}}$ is a full-flag of subbundles of $W_{p_{n+1}}$.

To construct the flag $F_{\bullet}$, for $1\leq j\leq i\leq n$, we define $V_{i, j}:=q_{j}q_{j+1}^{-1}(V_{i,j+1})$ inductively
by defining $V_{i,i}$ to be the $0$ subbundle of $W_{p_i}$. Set $F_i:=V_{n+2-i,1}$. From Lemma \ref{Int}, \ref{Lem b} and Remark \ref{Inj}, it follows that $F_i$ is a subbundle of $W_{p_{1}}$ and $0\subset F_n\subset \dots\subset F_{2}\subset F_1=W_{p_{1}}$ is a
full-flag of subbundles of $W_{p_{1}}$.
\end{proof}

\begin{rema}\label{IP}
In particular, the above proposition shows that given a rank $r$ Gieseker vector bundle $\mathcal E$ on $Y_n$ with $\mathcal E|_{R_i}=\mathcal{O}^{n-1}\oplus \mathcal{O}(1)$,
$i=1,\cdots n$, there is a canonical quasi-parabolic structure with full flags on $\tilde{E}:=\mathcal E|{_{X}}$ at the points $p_1$ and $p_{n+1}$.
\end{rema}

Now consider a triple $(V, F_{\bullet},G_{\bullet})$,
where $V$ is a vector bundle on the curve $X$, $F_{\bullet}$ is a full flag of $V_{p_1}$,
and $G_{\bullet}$ is a full-flag of $V_{p_{n+1}}$.
We say that such a triple is isomorphic to another such triple $(V', F'_{\bullet},G'_{\bullet})$
if there exists an isomorphism $\phi$ of $V$ and $V'$ which respects the flag structures i.e.,
these two triples are isomorphic as quasi-parabolic bundles.

\begin{lema}\label{mor}
If $(Y_{_n}, \mathcal{E})$ and $(Y_{_n}, \mathcal{E}')$ are equivalent (Definition \ref{EqGie}) as
Gieseker vector bundles then $(\widetilde{E}, F_{\bullet},G_{\bullet})$ and $(\widetilde{E}', F_{\bullet}',G_{\bullet}')$
are isomorphic, where $F_{\bullet}$, $G_{\bullet}$ (resp. $F_{\bullet}'$,$G_{\bullet}'$) are defined in Proposition \ref{flag1}.
\end{lema}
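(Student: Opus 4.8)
The statement asserts that equivalence of Gieseker bundles on $Y_n$ descends to isomorphism of the associated quasi-parabolic bundles on $X$. The plan is to unwind both notions of equivalence in terms of the quadruple description $(V, W, \phi_1, \phi_{n+1})$ introduced in the proof of Proposition \ref{flag1}, and to check that the flag-construction there is functorial. First I would recall that, since the fibre $Y_n$ has $X$ as its (unique) component of positive genus and $R$ as its chain of $\mathbb{P}^1$'s, any $T$-isomorphism $\phi_T: \mathcal X_T \to \mathcal X'_T$ over $Y\times T$ in Definition \ref{EqGie} must carry $X\times T$ to $X'\times T$ (here, with $Y=Y'$, to $X\times T$ again) and the chain $R_T$ to $R'_T$; moreover it commutes with the maps $\pi_n$, so it fixes $X$ pointwise on the generic fibre and hence restricts to the identity (or at worst an automorphism over $Y$) on the component $X$ — in the setting of the lemma, where the two Gieseker bundles live over the \emph{same} curve $Y_n$, I would simply take $T=\operatorname{Spec}\mathbb{C}$, so $\phi$ is an automorphism of $Y_n$ commuting with $\pi_n$, forcing $\phi|_X = \mathrm{id}_X$ and $\phi$ permuting/fixing the chain.

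Next, equivalence of $(Y_n,\mathcal E)$ and $(Y_n,\mathcal E')$ means $\mathcal E \cong \phi^*(\mathcal E'\otimes L)$ for a line bundle $L$ pulled back from the point, i.e. $L$ is trivial, so $\mathcal E \cong \phi^*\mathcal E'$ as vector bundles on $Y_n$. Restricting to $X$ and using $\phi|_X=\mathrm{id}$, this yields an isomorphism $\psi: \widetilde E \xrightarrow{\ \cong\ } \widetilde E'$ of vector bundles on $X$. Restricting instead to $R$ gives a compatible isomorphism $W \cong \phi^*W'$ on the chain. The key point is then that the isomorphism $\mathcal E\cong\phi^*\mathcal E'$ identifies the gluing data: under $\psi$ and the chain-isomorphism, the clutching maps $\phi_1,\phi_{n+1}$ match $\phi_1',\phi_{n+1}'$, and — because $\phi$ respects the nodal sections $p_1,\dots,p_{n+1}$ — the diagrams \eqref{4.1} (the spaces $R^0\pi_*(W|_{R_i})$ with the evaluation maps $q_i,q_{i+1}$) are carried isomorphically to the primed ones. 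Since the subspaces $V_{i,j}$, and hence the flags $F_\bullet$, $G_\bullet$ on $W_{p_1}$, $W_{p_{n+1}}$, are defined purely by iterating $q_{j\pm1}q_j^{-1}$ from the zero subspace, this functoriality forces $\psi(F_\bullet)=F'_\bullet$ and $\psi(G_\bullet)=G'_\bullet$ after transporting along the clutching isomorphisms; pulling back by $\phi_1,\phi_{n+1}$ then shows $\psi$ takes the parabolic flags of $\widetilde E$ at $p_1,p_{n+1}$ to those of $\widetilde E'$. Thus $(\widetilde E, F_\bullet, G_\bullet)\cong(\widetilde E', F'_\bullet, G'_\bullet)$ as quasi-parabolic bundles.

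I expect the only genuine subtlety to be the bookkeeping in the first step: verifying that $\phi$ must fix the component $X$ and act compatibly on the chain, so that "restrict to $X$" is well-defined and does not introduce an extra automorphism of $X$ that would have to be tracked through the flags. Once that normalization is in place, the rest is a diagram chase: the flag construction in Proposition \ref{flag1} is manifestly natural in the pair $(W,\{q_i\})$, so any isomorphism of Gieseker bundles respecting the nodes induces an isomorphism of the output triples. I would write the proof as: (i) reduce $L$ to trivial and $\phi$ to the identity on $X$; (ii) extract $\psi:\widetilde E\cong\widetilde E'$ and the matching chain data; (iii) invoke naturality of the $V_{i,j}$ under the induced isomorphism of the diagrams in \eqref{4.1} and Lemma \ref{Int}; (iv) conclude $\psi$ is an isomorphism of quasi-parabolic bundles.
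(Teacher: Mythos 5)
Your proposal is correct and follows essentially the same route as the paper: reduce to an automorphism $\sigma$ of $Y_n$ over $Y$ fixing $X$ (with $L$ trivial), restrict the isomorphism $\mathcal E\cong\sigma^*\mathcal E'$ to each $R_i$, and use the compatibility of the evaluation maps $q_i,q_{i+1}$ with the induced maps on $H^0(R_i,\mathcal E|_{R_i})$ to propagate, inductively from the zero subspace $V_{i,i}$, the identification $V_{i,j}\cong V'_{i,j}$ and hence of the flags $F_\bullet,G_\bullet$. The paper makes your ``naturality of the $V_{i,j}$'' step explicit as an induction on $j$ via the commutative diagram relating $q_i,q_{i+1}$ to $q_i',q_{i+1}'$, but the content is the same.
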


\begin{proof}
Consider the following commutative diagram:
\begin{equation}
\begin{tikzcd}
\mathcal E\arrow{r}{\phi}\arrow{d}& \mathcal E'\arrow{d}\\
Y_n \arrow{r}{\sigma}& Y_n
\end{tikzcd}
\end{equation}
Here $\phi$ is an isomorphism between $\mathcal E$ and $\mathcal E'$ and $\sigma$ is an automorphism of $Y_n$ fixing the sub curve $X$.

We restrict the diagram to $R_i$, the $i$-th $\mathbb P^1$:
\begin{equation}
\begin{tikzcd}
\mathcal E|_{R_i}\arrow{r}{\phi}\arrow{d}& \mathcal E|_{R_i}'\arrow{d}\\
R_i \arrow{r}{\sigma}& R_i
\end{tikzcd}
\end{equation}

Notice that the automorphism $\sigma$ fixes the two points $p_i$ and $p_{i+1}$.

Let us denote by $\tilde{\phi}$ the morphism $H^0(R_i, \mathcal E|_{R_i})\rightarrow H^0(R_i, \mathcal E'|_{R_i})$ induced by the morphism $\phi$. We also denote by $\phi_{p_i}$ the evaluation of the morphism $\phi$ at the point $p_i$ for $i=1,\dots, n+1$.

We have the following commutative diagram:

\begin{equation}
\begin{tikzcd}
& H^0(R_{i}, \mathcal{E}|_{_{R_{i}}})\arrow{dl}{q_{i}}\arrow{dddr}{q_{i+1}} \arrow{rrrr}{\tilde{\phi}} &&&& H^0(R_{i}, \mathcal{E}'|_{_{R_{i}}})\arrow{dl}{q'_{i}}\arrow{dddr}{q'_{i+1}}\\
\mathcal{E}_{p_{i}}\arrow{rrrr}{\phi_{p_i}} &&&& \mathcal{E}'_{p_{i}}\\ \\
&&\mathcal{E}_{p_{i+1}}\arrow{rrrr}{\phi_{p_{i+1}}} &&&& \mathcal{E}'_{p_{i+1}}
\end{tikzcd}
\end{equation}

Notice that if $V$ is any subspace of $\mathcal E_{p_i}$ then we have an induced isomorphism
\begin{equation}\label{flag111}
\phi_{p_{i+1}}: q_{i+1}q_i^{-1}(V)\rightarrow q_{i+1}'q_i'^{-1}(\phi_{p_i}(V))
\end{equation}

For $1\leq i\leq j\leq n$ let $V_{{i, j+1}}$ denote the intermediate vector spaces constructed in Proposition \ref{flag1} for the Gieseker vector bundle $\mathcal E$ and let $V'_{{i, j+1}}$ denote the intermediate vector spaces constructed in Proposition \ref{flag1} for the Gieseker vector bundle $\mathcal E'$.

Notice in the construction of the flag in proposition \ref{flag1}, $V_{_{i,i}}\xrightarrow{\phi_{_{p_{i}}}} V'_{_{i,i}}$ is an isomorphism, because $V_{_{i,i}}$ and $V'_{i,i}$ are both zero vector spaces. Therefore from the equation \ref{flag111}, it follows that $V_{_{i,i+1}}\xrightarrow{\phi_{_{p_{i+1}}}} V'_{_{i,i+1}}$ is an isomorphism. From this it follows that $V_{_{i,i+2}}\xrightarrow{\phi_{_{p_{i+2}}}} V'_{_{i,i+2}}$ is an isomorphism and iterating in this manner finally we see that $V_{_{i,n+1}}\xrightarrow{\phi_{_{p_{n+1}}}} V'_{_{i,n+1}}$ is an isomorphism. Therefore the flag (constructed in proposition \ref{flag1}) at the point $p_{n+1}$ corresponding to $\mathcal E|_{_{R}}$ is mapped to the flag at the point $p_{n+1}$ corresponding to $\mathcal E'|_{_{R}}$ by the isomorphism $\phi_{p_{n+1}}$.

Similarly the flag (constructed in proposition \ref{flag1}) at the point $p_{1}$ corresponding to $\mathcal E|_{_{R}}$ is mapped isomorphically to the flag at the point $p_{1}$ corresponding to $\mathcal E'|_{_{R}}$ under the isomorphism $\phi_{p_1}$.

Notice that the induced isomorphism between $\tilde{E}$ and $\tilde{E}'$ naturally agrees with the isomorphism
of $\mathcal{E}|_{_R}$ and $\mathcal{E}'|_{_R}$ at the meeting points $p_{_1}$ and $p_{_{n+1}}$. Therefore from the equation \ref{flag1} it follows that if two Gieseker bundles are equivalent then the induced parabolic triples are also isomorphic.
\end{proof}

\section{Automorphism of Gieseker vector bundles:}

\begin{lema}\label{Uni}
Up to isomorphism there is only one strictly standard rank $n$ vector bundle $\mathcal{E}$ on a chain $R$ of length $n$.
\end{lema}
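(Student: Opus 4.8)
The plan is to pin down precisely what data a strictly standard bundle $\mathcal{E}$ on the chain $R = R_1 \cup \dots \cup R_n$ carries and show there is no room for moduli. Recall from Remark \ref{type} that a Gieseker vector bundle of rank $n$ on $Y_n$ restricts on each $R_i$ to $\mathcal{O}^{n-1}\oplus\mathcal{O}(1)$; since ``strictly standard'' means positive, each $a_{ij}\in\{0,1\}$, and strictly positive means each $R_i$ gets at least one $\mathcal{O}(1)$-summand, the torsion-freeness condition $H^0(R,\mathcal{E}|_R(-p_1-p_{n+1}))=0$ forces exactly one $\mathcal{O}(1)$ on each $R_i$, i.e. $\mathcal{E}|_{R_i}\cong\mathcal{O}^{n-1}\oplus\mathcal{O}(1)$ for every $i$. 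So the local model on each component is fixed; what could vary is the gluing data at the $n-1$ interior nodes $p_2,\dots,p_n$.

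First I would set up the gluing description: $\mathcal{E}$ is determined by the bundles $\mathcal{E}|_{R_i}$ together with isomorphisms of the fibres at each interior node $p_{i+1}$, namely $\mathcal{E}|_{R_i}(p_{i+1}) \xrightarrow{\sim} \mathcal{E}|_{R_{i+1}}(p_{i+1})$, up to the automorphism groups of the $\mathcal{E}|_{R_i}$. Thus the moduli of such $\mathcal{E}$ is a quotient of a product of copies of $\mathrm{GL}_n$ (the gluings) by a product of the automorphism groups $\mathrm{Aut}(\mathcal{O}^{n-1}\oplus\mathcal{O}(1))$ acting on the two ends of each node. The key point is that these automorphism groups are large enough to kill all the gluing parameters, \emph{subject to} the admissibility/torsion-freeness constraint: the gluing at $p_{i+1}$ is constrained by Lemma \ref{Int} (the condition $q_{i+j}^{-1}(V_{i,i+j})\cap q_{i+j+1}^{-1}(0)=0$), which is exactly what says the distinguished hyperplanes (the kernels of evaluation at the two nodes of $R_i$) are in general position. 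I would verify that the relevant automorphism group acts transitively on the set of admissible gluings at each node, proceeding one node at a time from $R_1$ to $R_n$: having normalized the gluings at $p_2,\dots,p_{i}$, the residual automorphisms of $\mathcal{E}|_{R_i}$ fixing the already-normalized structure at $p_i$ still act on the gluing at $p_{i+1}$, and I would check this action is transitive on admissible data by an explicit count using Remark \ref{Inj} (the rank formula $\dim V_1 + b$ for images under $q_2 q_1^{-1}$).

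An alternative, and probably cleaner, route is cohomological: show $H^1(R,\mathcal{E}nd(\mathcal{E}))=0$ for a strictly standard $\mathcal{E}$, or more precisely that the deformations of $\mathcal{E}$ as a bundle on a fixed $R$ that preserve strict standardness are unobstructed and the moduli is a point. One computes $\mathcal{E}nd(\mathcal{E})|_{R_i} = (\mathcal{O}^{n-1}\oplus\mathcal{O}(1))\otimes(\mathcal{O}^{n-1}\oplus\mathcal{O}(-1)\oplus\text{(dual)}) $, which on $\mathbb{P}^1$ has summands $\mathcal{O}$, $\mathcal{O}(1)$, $\mathcal{O}(-1)$ only, so $H^1(R_i,\mathcal{E}nd(\mathcal{E})|_{R_i})=0$; then a Mayer--Vietoris argument over the chain, using that the restriction maps to the nodes are surjective on $H^0$ (the two arrows in \eqref{4.1} are surjective), gives $H^1(R,\mathcal{E}nd(\mathcal{E}))=0$. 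Combined with connectedness of the relevant moduli and the fixed numerical type, this forces uniqueness.

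The main obstacle I anticipate is the bookkeeping around the admissibility condition: strict standardness is not an open condition invariant under arbitrary deformation of the bundle on $R$, so one cannot simply invoke $H^1=0$ for the full endomorphism sheaf without care — one must restrict to deformations keeping each $\mathcal{E}|_{R_i}$ of the form $\mathcal{O}^{n-1}\oplus\mathcal{O}(1)$ \emph{and} keeping the node-kernels transverse, and check that the obstruction space for this constrained problem still vanishes (or, in the group-quotient picture, that the transitivity at each node genuinely uses up exactly the available automorphisms and leaves nothing behind). Handling the two extremal components $R_1$ and $R_n$, where the marked points $p_1$ and $p_{n+1}$ live and only one gluing is attached, requires a slightly separate (easier) argument.
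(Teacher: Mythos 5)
Your proposal takes a genuinely different route from the paper. The paper's proof is essentially a citation: by \cite[Lemma 5.4]{VPD}, every such $\mathcal{E}$ decomposes canonically as $\oplus_{i=1}^{n}\mathcal{L}_i$ with $\mathcal{L}_i$ of multidegree $(0,\dots,0,1,0,\dots,0)$, and since the dual graph of $R$ is a tree, a line bundle on $R$ is determined up to isomorphism by its multidegree; hence the summands, and therefore $\mathcal{E}$, are pinned down. Your two routes --- transitivity of the automorphism groups on admissible gluing data, and rigidity via $H^1(R,\mathcal{E}nd(\mathcal{E}))=0$ together with irreducibility of the parameter space of gluings --- both amount to re-deriving the content of that citation from scratch. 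The citation buys brevity; your first route buys a self-contained argument that produces exactly the parabolic/Borel picture the paper needs anyway in Lemma \ref{3.8} and Proposition \ref{Immersion}. You are also right that the splitting type $\mathcal{O}^{n-1}\oplus\mathcal{O}(1)$ on each $R_i$ is not a consequence of ``strictly standard'' alone (as literally defined, $\mathcal{O}(1)^{\oplus n}$ on every component qualifies) but of the torsion-freeness/admissibility of a Gieseker bundle as in Remark \ref{type}; the paper's statement carries the same implicit hypothesis.

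That said, in both of your routes the decisive computation is deferred, and it is the entire content of the lemma, so let me record why it closes. For the gluing route, the clean induction is on the length of the chain rather than node-by-node normalization with ``residual'' automorphisms (the latter gets entangled because the two evaluations of $\mathrm{Aut}(\mathcal{E}|_{R_i})$ at its two nodes share their Levi part). Admissibility of $\mathcal{E}$ restricts to admissibility on $R_1\cup\dots\cup R_{n-1}$ (extend a bad section by zero on $R_n$), so by induction that restriction is the standard model. The image of $\mathrm{Aut}$ of the standard bundle on the subchain, evaluated at $p_n$, is the full Borel stabilizing the flag $V_{\bullet,n}$ of Proposition \ref{flag1} (a dimension count using $\mathrm{Hom}(\mathcal{L}_i,\mathcal{L}_j)$ as in Lemma \ref{3.8} gives exactly $n+n(n-1)/2$); the image of $\mathrm{Aut}(\mathcal{O}^{n-1}\oplus\mathcal{O}(1))$ at $p_n$ is the maximal parabolic stabilizing the line $\mathcal{O}(1)_{p_n}$; and admissibility at the last node is precisely $\phi_n^{-1}(\mathcal{O}(1)_{p_n})\not\subset V_{1,n}$, i.e.\ membership in the unique open $(B\times P)$-orbit, so Bruhat gives transitivity. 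For the cohomological route you additionally need the surjectivity of $\oplus_i H^0(R_i,\mathcal{E}nd(\mathcal{E})|_{R_i})\to\oplus_{j}\mathcal{E}nd(\mathcal{E})_{p_{j+1}}$ (which again uses admissibility, since each evaluation map misses the $\mathrm{Hom}(\mathcal{O}(1),\mathcal{O})$ directions), plus irreducibility of the space of gluings and openness of orbits of rigid points, before ``connectedness forces uniqueness.'' Either way the plan is sound; it is just considerably longer than the decomposition the paper imports.
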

\begin{proof}
We know from \cite[Lemma 5.4]{VPD} that $\mathcal{E}=\oplus_{i=1}^{n} \mathcal{L}_{_i}$,
where $\mathcal{L}_{_i}$ is $\mathcal{O}(1)$ on $R_{_i}$ and $\mathcal{O}$ elsewhere.
If $\mathcal{E}'$ is another strictly standard rank $n$ vector bundle on $R$ then $\mathcal{E}'=\oplus_{i=1}^{n} \mathcal{L}'_{_i}$,
where $\mathcal{L}'_{_i}$ is $\mathcal{O}(1)$ on $R_{_i}$ and $\mathcal{O}$ elsewhere.
It is easy to check that $\mathcal{L}_{_i}\cong \mathcal{L}'_{_i}$. Therefore $\mathcal{E}\cong \mathcal{E}'$.
\end{proof}

\begin{rema}\label{Iden}
The lemma \ref{Uni} implies that, for simplicity, we can fix one such bundle on the chain $R$ and
we can also assume that all connecting fibre identifications are some fixed elements in the tori
with respect to the basis corresponding to the direct sum decomposition i.e., in the
tori $\times_{i=1}^{i=n} Isom(\mathcal{L}_i|_{R_j,p_{j+1}}, \mathcal{L}_i|_{R_{j+1},p_{j+1}})$ for all $1\leq j\leq n-1$.
\end{rema}

\begin{lema}\label{3.8}
Let $\mathcal{E}$ be a rank $n$ Gieseker bundle on the chain $R$ of $\mathbb{P}^1$s of length $n$.
Let $p_1$ and $p_{n+1}$ be two points on the extremal irreducible components which are also smooth points of $R$.
We decompose $\mathcal{E}=\mathcal{L}_{_1}\oplus \dots \oplus \mathcal{L}_{_{n}}$.
We consider two copies of $GL_n$ namely $Aut(\mathcal{E}_{_{p_1}})$ and $Aut(\mathcal{E}_{_{p_{n+1}}})$ and fix a maximal torus $T$ namely $\times_{_{i=1}}^{n} Aut(\mathcal{L}_{_{i}})$. Let us consider the following flag at $p_1$:
$$
\mathcal E_{_{p_1}}\supset \mathcal L_{1,p_1}\oplus\dots\oplus\mathcal L_{n-1,p_1}\supset\dots\supset \mathcal L_{1,p_1}\supset 0
$$
and the following flag at $p_{n+1}$:
$$
\mathcal E_{_{p_{n+1}}}\supset \mathcal L_{2,p_{n+1}}\oplus\dots\oplus\mathcal L_{n,p_{n+1}}\supset\dots\supset \mathcal L_{n,p_{n+1}}\supset 0
$$
We define $B^+$ to be the Borel subgroup of $Aut(\mathcal E_{p_1})$ to be the stabilizer of the flag defined above. Similarly, we define $B^-$ to be the Borel subgroup of $Aut(\mathcal E_{p_{n+1}})$ to be the stabilizer of the flag defined above.

Then the morphism given by the evaluation of the automorphism at
the two extremal points $Aut(\mathcal{E})\rightarrow Aut(\mathcal{E}_{p_1})\times Aut(\mathcal{E}_{_{p_{n+1}}})$ is injective.
There is a natural projection $q:B^+\times B^-\rightarrow T\times T$. The image of the morphism
$Aut(\mathcal{E})\rightarrow Aut(\mathcal{E}_{p_1})\times Aut(\mathcal{E}_{_{p_{n+1}}})$ is $q^{-1}(\Delta)$,
where $\Delta$ is the diagonal of $T\times T$.
\end{lema}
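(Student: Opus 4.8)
The plan is to analyze the automorphism group of the strictly standard Gieseker bundle $\mathcal E = \mathcal L_1 \oplus \dots \oplus \mathcal L_n$ on the chain $R$ by restricting an automorphism to each component $R_i \cong \mathbb P^1$ and tracking compatibility at the gluing nodes. First I would record that on $R_i$ we have $\mathcal E|_{R_i} \cong \mathcal O^{n-1}\oplus \mathcal O(1)$, where the $\mathcal O(1)$-summand is $\mathcal L_i|_{R_i}$, and that $\mathrm{Aut}(\mathcal O^{n-1}\oplus\mathcal O(1))$ acts on $H^0(R_i,\mathcal E|_{R_i})$. Since a section of $\mathcal O$ on $\mathbb P^1$ is a constant, an automorphism of $\mathcal E$ is determined on $R_i$ by a constant $GL_n$-matrix (acting on the constant parts of the $\mathcal O$-summands) together with the datum of how it acts on $H^0(\mathcal O(1))$, and these must match the constants via the evaluation maps $q_i, q_{i+1}$ at the two endpoints $p_i,p_{i+1}$ of $R_i$. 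Concretely, $\phi|_{R_i}$ is specified by its two endpoint evaluations $\phi_{p_i}\in\mathrm{Aut}(\mathcal E_{p_i})$, $\phi_{p_{i+1}}\in\mathrm{Aut}(\mathcal E_{p_{i+1}})$, and I would show that the pair $(\phi_{p_i},\phi_{p_{i+1}})$ that arises from some $\phi|_{R_i}$ is exactly the pairs that stabilize the partial flag induced by $\mathcal L_i|_{R_i}$ being the positive summand — i.e. $\phi_{p_i}$ and $\phi_{p_{i+1}}$ must both preserve the hyperplane $\bigoplus_{j\neq i}\mathcal L_{j}$ and induce the same scalar on the line $\mathcal L_i$ and the same map on the quotient by that hyperplane. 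This is the local calculation at a single $\mathbb P^1$; it is essentially \cite[Lemma 1]{DC} / Remark \ref{Inj} applied with $V$ the relevant coordinate subspace.

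Next I would assemble these local constraints over the whole chain. An automorphism $\phi$ of $\mathcal E$ is the same as a tuple $(\phi_{p_1},\dots,\phi_{p_{n+1}})$ with $\phi_{p_j}\in\mathrm{Aut}(\mathcal E_{p_j})=GL_n$ such that for each $i$ the pair $(\phi_{p_i},\phi_{p_{i+1}})$ is admissible for the $\mathbb P^1$ component $R_i$ in the sense just described; injectivity of $\mathrm{Aut}(\mathcal E)\to\mathrm{Aut}(\mathcal E_{p_1})\times\mathrm{Aut}(\mathcal E_{p_{n+1}})$ then amounts to showing the whole tuple is determined by its first and last entries, which follows because each interior $\phi_{p_{i+1}}$ is forced by $\phi_{p_i}$ up to the chain propagation (a section of $\mathcal O(1)$ on $\mathbb P^1$ is determined by its values at two points, so $\phi|_{R_i}$ and hence $\phi_{p_{i+1}}$ is determined by $\phi_{p_i}$ together with the bundle structure). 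Unwinding the flag conventions of the statement: the component $\mathcal L_i$ carrying the $\mathcal O(1)$ on $R_i$, as $i$ runs from $1$ to $n$, produces precisely the nested chain of hyperplanes defining $B^+\subset\mathrm{Aut}(\mathcal E_{p_1})$ (stabilizer of $\mathcal L_1\subset\mathcal L_1\oplus\mathcal L_2\subset\cdots$) when read from the $p_1$ end, and the chain defining $B^-\subset\mathrm{Aut}(\mathcal E_{p_{n+1}})$ when read from the $p_{n+1}$ end. Hence the image lands in $B^+\times B^-$.

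To pin down the image as $q^{-1}(\Delta)$, I would argue both containments. For "$\subseteq$": given $\phi\in\mathrm{Aut}(\mathcal E)$, its restriction to the diagonal torus data is governed by how $\phi$ scales each line bundle $\mathcal L_i$ globally on the component where $\mathcal L_i\cong\mathcal O(1)$; but the torus quotients $B^+\to T$ and $B^-\to T$ both read off the diagonal entries $(\lambda_1,\dots,\lambda_n)$ where $\lambda_i$ is the scalar by which $\phi$ acts on $\mathcal L_i$, and since $\phi$ acts on the single line bundle $\mathcal L_i$ on all of $R$ by one scalar, the two images in $T$ coincide — so $q(\phi_{p_1},\phi_{p_{n+1}})\in\Delta$. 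For "$\supseteq$": given $(b^+,b^-)\in B^+\times B^-$ with matching torus parts, I would construct $\phi$ component by component, using on each $R_i$ the fact that a compatible pair of endpoint automorphisms with agreeing behaviour on the $\mathcal O(1)$-line and the $\mathcal O$-complement extends to an automorphism of $\mathcal O^{n-1}\oplus\mathcal O(1)$ over $\mathbb P^1$ — here the matching-on-the-torus condition is exactly what guarantees the interior gluing propagates consistently down the chain (the scalar on $\mathcal L_i$ transported from the $p_1$-side agrees with the one prescribed from the $p_{n+1}$-side).

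The main obstacle I anticipate is bookkeeping: verifying that the two flag conventions in the statement (the "descending" flag $\mathcal L_1\oplus\cdots\oplus\mathcal L_{n-1}\supset\cdots\supset\mathcal L_1$ at $p_1$ versus $\mathcal L_2\oplus\cdots\oplus\mathcal L_n\supset\cdots\supset\mathcal L_n$ at $p_{n+1}$) are precisely the ones cut out by the positivity pattern of $\mathcal E$ along the chain, and then checking that the single unipotent/torus degree of freedom lost at each interior node is accounted for so that the image is cut out by exactly the one equation $q(b^+,b^-)\in\Delta$ rather than something smaller. This is where the admissibility condition of Lemma \ref{Int} does the real work, ensuring $q_i^{-1}(\cdot)\cap q_{i+1}^{-1}(0)=0$ so that the propagation of subspaces (and hence of the automorphism data) along the chain is an isomorphism at each stage and no collapsing occurs.
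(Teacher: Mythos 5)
Your overall strategy (analyze $\mathrm{Aut}(\mathcal E|_{R_i})$ on each $\mathbb P^1$ and assemble along the chain) could in principle be made to work, but two of your key local claims are wrong, and one of them contradicts the very statement you are proving. First, the subspace preserved by an automorphism of $\mathcal E|_{R_i}\cong\mathcal O^{n-1}\oplus\mathcal O(1)$ is the \emph{line} $\mathcal L_i|_{R_i}=\mathcal O(1)$, not the hyperplane $\oplus_{j\neq i}\mathcal L_j$: one has $Hom(\mathcal O(1),\mathcal O)=H^0(\mathcal O(-1))=0$ while $Hom(\mathcal O,\mathcal O(1))=H^0(\mathcal O(1))\cong\mathbb C^2\neq 0$, so the trivial summand is not stable. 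The correct local statement is that $\phi_{p_i}$ and $\phi_{p_{i+1}}$ both preserve $\mathcal L_{i}$, induce the \emph{same} scalar on $\mathcal L_i$ and the same constant matrix on the quotient $\mathcal E/\mathcal L_i$, while the two components $(\mathcal E/\mathcal L_i)_{p}\to\mathcal L_{i,p}$ at $p=p_i$ and $p=p_{i+1}$ are completely free and \emph{independent} of each other, precisely because a section of $\mathcal O(1)$ takes arbitrary, unrelated values at two distinct points. Second, and fatally, your injectivity argument asserts that $\phi|_{R_i}$, hence $\phi_{p_{i+1}}$, is determined by $\phi_{p_i}$; this does not follow from "a section of $\mathcal O(1)$ is determined by its values at two points" (you need both points), and it cannot be true: if $\phi_{p_{n+1}}$ were a function of $\phi_{p_1}$, the image of $\mathrm{Aut}(\mathcal E)$ in $\mathrm{Aut}(\mathcal E_{p_1})\times\mathrm{Aut}(\mathcal E_{p_{n+1}})$ would be a graph of dimension at most $\dim B^+=n(n+1)/2$, whereas $q^{-1}(\Delta)$ has dimension $n^2$. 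In fact each $\phi_{p_i}$ leaves an $(n-1)$-dimensional family of admissible $\phi_{p_{i+1}}$, and it is exactly this freedom, accumulated along the chain and read off at the two ends, that fills out $U^+\times U^-$ over the diagonal torus.

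The paper avoids the chain-propagation bookkeeping entirely by working globally: by \cite[Lemma 5.5]{VPD} one has $End(\mathcal E)=\oplus_{i,j}Hom(\mathcal L_i,\mathcal L_j)$ with every summand one-dimensional; the generator of $Hom(\mathcal L_i,\mathcal L_j)$ for $i\neq j$ vanishes at exactly one of the two extremal points and is nonzero at the other, while the diagonal summands are global scalars. This gives injectivity of the two-point evaluation, the containment of the image in $q^{-1}(\Delta)$ (the two torus parts are the same global scalars), and the surjectivity onto $q^{-1}(\Delta)$ (each off-diagonal entry of $b^+$ or $b^-$ determines exactly one summand) in one stroke, with Corollary \ref{3.10} supplying the final point that the endomorphism so constructed is invertible --- a step your sketch also omits. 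If you want to salvage the component-by-component route, you must replace your local characterization by the correct one above and then run an explicit induction on the chain using Lemma \ref{Int} and Remark \ref{Inj}, keeping track of the $(n-1)$ free parameters introduced at each node; but at that point you are essentially re-deriving the direct sum decomposition of $End(\mathcal E)$.
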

\begin{proof}
We know from \cite[Lemma 5.5]{VPD}, $Hom(\mathcal{E},\mathcal E)=\oplus_{i,j} Hom(\mathcal{L}_{_i}, \mathcal{L}_{_j})$.
Therefore an element $\sigma\in Aut(\mathcal{E})$ is a matrix $(\sigma_{_{i,j}})$,
where $\sigma_{_{i,j}}\in Hom(\mathcal{L}_{_i}, \mathcal{L}_{_j})$. Since $Hom(\mathcal{L}_{_i}, \mathcal{L}_{_i})=\mathbb{C}$,
we know that $\sigma_{_{ii}}$'s are scalars. Since the evaluations $\sigma_{_{p_1}}$ and
$\sigma_{_{p_{n+1}}}$ fix the flags $F_{\bullet}$ and $G_{\bullet}$ respectively,
$\sigma_{_{p_1}}\in B^+$ and $\sigma_{_{p_{n+1}}}\in B^-$.
Since $\sigma_{_{ii}}$'s are scalars we conclude that the image lies in $q^{-1}(\Delta)$,
where $\Delta$ is the diagonal of the maximal torus $T\times T$ of $B^+\times B^-$ ($T$ is described in the statement of the lemma).
Using the description of the $End(\mathcal{E})$,
we see that $Aut(\mathcal{E})\rightarrow Aut(\mathcal{E}_{p_1})\times Aut(\mathcal{E}_{_{p_{n+1}}})$ is injective.

Now let $u^+\in U^+$ and $u^-\in U^-$ and $t\in T$. We write $u^+$ as $(u^+_{ij})$ and $u^-$ as $(u^-_{ij})$ and $t$ as $(t_{ij})$.
We want to find a $\sigma:=(\sigma_{ij})\in Aut(\mathcal{E})$ such that the two restrictions at the points $p_1$ and $p_{n+1}$ are
$b^+:=t\cdot u^+$ and $b^-:=t\cdot u^-$. We write $b^+$ as $(b^+_{ij})$ and $b^-$ as $(b^-_{ij})$.
Since $Hom(\mathcal{L}_{_i}, \mathcal{L}_{_j})=\mathbb{C}$, $\sigma_{ij}$ is determined by $b^{+}_{ij}$ if $i<j$ and
$\sigma_{ij}$ is determined by $b^{-}_{ij}$ if $i>j$. Now the proof follows from the Corollary \ref{3.10}.
\end{proof}

\begin{lema}
Let $C$ be any connected projective curve and $p\in C$ be any point. Let $\mathcal{E}$ be a
vector bundle on $C$ and $\sigma\in End (\mathcal{E})$.
Suppose $\sigma(p)\in Aut(\mathcal{E}_p)$. Then $\sigma\in Aut(\mathcal{E})$.
\end{lema}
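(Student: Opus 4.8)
The plan is to show that $\sigma$ is an automorphism by proving it is an isomorphism of vector bundles, which (since $C$ is projective) reduces to showing $\sigma$ is fibrewise injective at every point, equivalently that $\det\sigma$, viewed as a section of $\mathcal{H}om(\wedge^n\mathcal{E},\wedge^n\mathcal{E})=\mathcal{O}_C$, vanishes nowhere. First I would consider the scheme-theoretic degeneracy locus $Z\subset C$ where $\sigma$ drops rank, i.e. the zero locus of $\det\sigma\in H^0(C,\mathcal{O}_C)$; by hypothesis $p\notin Z$, so in particular $Z\neq C$ on the connected component containing $p$, and I must rule out $Z$ being a nonempty proper closed subset.

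The key observation is that $\det\sigma$ is a global regular function on the connected projective curve $C$, hence constant. So I would argue: the component of $C$ containing $p$ is projective and connected, so $H^0$ of its structure sheaf (modulo nilpotents) is just $\mathbb{C}$, forcing $\det\sigma$ to be a constant scalar there; since it is nonzero at $p$, it is a nonzero constant on that whole component, so $\sigma$ is an isomorphism there. The only subtlety is passing from one component to all of $C$: if $C$ is reducible, I would use connectedness to propagate the conclusion across the intersection points — at a point $c$ lying on two components $C_1$ (containing $p$, where $\sigma$ is already known invertible) and $C_2$, the fibre $\mathcal{E}_c$ sees the restriction $\sigma_c\in\mathrm{Aut}(\mathcal{E}_c)$, so $\sigma|_{C_2}$ is an endomorphism invertible at the point $c\in C_2$; repeating the component-wise argument on $C_2$ and walking along the dual graph of $C$ (which is connected) gives invertibility on every component, hence on $C$.

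Concretely, the cleanest writeup: restrict attention first to reduced $C$ (nilpotents in $\mathcal{O}_C$ do not affect whether $\sigma$ is an automorphism, as invertibility can be checked on the associated reduced scheme by Nakayama). For reduced connected projective $C$, on each irreducible component $C_i$ the function $\det(\sigma|_{C_i})\in H^0(C_i,\mathcal{O}_{C_i})=\mathbb{C}$ is a constant; it is nonzero on the component through $p$, and constancy together with the matching condition $\sigma_c\in\mathrm{Aut}(\mathcal{E}_c)$ at each node $c$ forces the constant to be nonzero on every component by induction on the distance in the dual graph from the component containing $p$. Therefore $\det\sigma$ is nowhere zero, $\sigma$ is fibrewise an isomorphism, and a fibrewise-isomorphic morphism of vector bundles of the same rank on a projective scheme is an isomorphism, so $\sigma\in\mathrm{Aut}(\mathcal{E})$.

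The main obstacle is purely bookkeeping for reducible and possibly non-reduced $C$: one has to be careful that "invertible at a point of one component" genuinely transfers to "endomorphism-with-an-invertible-value" on an adjacent component through the shared node, and that the dual-graph connectedness argument is stated cleanly. There is no hard analysis here — the single substantive input is that a regular function on a connected projective variety is constant — so I expect the proof to be short.
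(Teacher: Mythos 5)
Your proposal is correct and follows essentially the same route as the paper: view $\det\sigma$ as a global section of $\mathcal{O}_C$, use $H^0(C,\mathcal{O}_C)=\mathbb{C}$ for a connected projective curve to conclude it is constant, and deduce nowhere-vanishing from non-vanishing at $p$. Your extra dual-graph bookkeeping for reducible curves is harmless but not needed, since constancy of $\det\sigma$ already holds globally on the connected curve.
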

\begin{proof}
Given $\sigma:\mathcal{E}\rightarrow \mathcal{E}$,
we consider $\text{det} ~\sigma: \text{det}~\mathcal{E}\rightarrow \text{det}~\mathcal{E}$.
We note that $\text{det}~\sigma\in H^0(C, \mathcal{O}_{_C})=\mathbb{C}$.
Therefore if $\text{det} ~\sigma(p)\neq 0$, $\text{det} ~\sigma(p)\neq 0$ $\forall p\in C$.
Therefore $\sigma\in Aut(\mathcal{E})$.
\end{proof}

\begin{coro}\label{3.10}
Let $\mathcal E$ be a Gieseker vector bundle and $\sigma\in End(\mathcal{E})$ such that
$\sigma_{_{p_1}}\in Aut(\mathcal{E}_{p_1})$ and $\sigma_{_{p_{n+1}}}\in Aut(\mathcal{E}_{p_{n+1}})$.
Then $\sigma\in Aut(\mathcal{E})$.
\end{coro}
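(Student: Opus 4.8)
The plan is to reduce the statement to the lemma immediately preceding it, which says that an endomorphism of a vector bundle on a connected projective curve that is invertible at a single point is globally invertible. The obstacle is that a Gieseker curve $Y_k$ (or the chain $R$) is reducible, so $H^0(Y_k,\mathcal O_{Y_k})$ need not be a field and the determinant argument does not apply directly to the whole curve at once. Instead I would run the argument one irreducible component at a time and propagate invertibility across the nodes.

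First I would set up the bookkeeping. Write $R=\bigcup_{i=1}^k R_i$ with $R_i\cong\mathbb P^1$, and denote by $p_i$ the node $R_{i-1}\cap R_i$ (so $p_1$ and $p_{n+1}$ are the two extremal marked points on $R_1$ and $R_k$ respectively, in the rank $n=k$ case of interest; the same argument works verbatim if $X$ is one of the components). Given $\sigma\in \mathrm{End}(\mathcal E)$ with $\sigma_{p_1}\in \mathrm{Aut}(\mathcal E_{p_1})$, restrict $\sigma$ to $\sigma|_{R_1}\in\mathrm{End}(\mathcal E|_{R_1})$. Since $R_1$ is connected and projective and $\sigma$ is invertible at the point $p_1\in R_1$, the preceding lemma gives $\sigma|_{R_1}\in\mathrm{Aut}(\mathcal E|_{R_1})$; in particular $\sigma$ is invertible at every point of $R_1$, including the node $p_2=R_1\cap R_2$.

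Now I would induct on the components. Suppose $\sigma$ is invertible at the node $p_{i}=R_{i-1}\cap R_i$. Restrict $\sigma$ to $R_i$: it is an endomorphism of $\mathcal E|_{R_i}$ that is invertible at $p_i\in R_i$, so by the same lemma $\sigma|_{R_i}\in\mathrm{Aut}(\mathcal E|_{R_i})$, hence $\sigma$ is invertible at every point of $R_i$, in particular at the next node $p_{i+1}=R_i\cap R_{i+1}$. Since the chain $R$ is connected and the components are linearly ordered, after finitely many steps $\sigma$ is invertible at every point of every component, i.e. $\sigma(x)\in\mathrm{Aut}(\mathcal E_x)$ for all $x\in R$. (The hypothesis $\sigma_{p_{n+1}}\in\mathrm{Aut}(\mathcal E_{p_{n+1}})$ is then automatic and serves only as a consistency check; in the general case where $\mathcal E$ lives on $Y_k$ rather than on a chain, one treats the component $X$ exactly like any $R_i$, propagating from whichever node $X$ meets.)

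Finally, invertibility of $\sigma$ at every point is equivalent to $\sigma$ being an isomorphism of sheaves: the cokernel of $\sigma:\mathcal E\to\mathcal E$ is a coherent sheaf whose fibre at each point vanishes, hence $\sigma$ is surjective, and a surjective endomorphism of a coherent sheaf on a Noetherian scheme is an isomorphism (or, more concretely, $\sigma$ is an isomorphism on each component and the gluing data at the nodes is matched because $\sigma$ is a global morphism). Therefore $\sigma\in\mathrm{Aut}(\mathcal E)$. The only mildly delicate point is the very first reduction — ensuring that the determinant-style argument of the preceding lemma is legitimately applied componentwise and that invertibility genuinely transfers through the node points — but since each $R_i$ and $X$ is itself connected and projective, there is no real difficulty.
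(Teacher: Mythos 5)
Your argument is correct, but the detour it takes is based on a false premise, and the paper's (implicit) proof is shorter. The corollary is stated immediately after a lemma that explicitly allows $C$ to be \emph{any connected projective curve}, and the paper intends you to apply that lemma directly to the whole Gieseker curve (or to the chain $R$) with $p=p_1$: since $Y_n$ and $R$ are connected and \emph{reduced} projective curves over $\mathbb C$, one still has $H^0(\mathcal O)=\mathbb C$ (a finite-dimensional reduced $\mathbb C$-algebra with no nontrivial idempotents is $\mathbb C$), so $\det\sigma$ is a global constant and invertibility at the single point $p_1$ already forces $\sigma\in Aut(\mathcal E)$; the second hypothesis at $p_{n+1}$ is redundant in both treatments. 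Your stated obstacle --- that reducibility prevents the determinant argument from applying to the whole curve at once --- therefore does not actually arise. That said, your componentwise propagation is a legitimate alternative: you apply the lemma only to the irreducible components $R_i\cong\mathbb P^1$ (and $X$), push invertibility across each node, and then upgrade pointwise invertibility to a sheaf isomorphism via the cokernel/surjectivity argument. What this buys is independence from the fact that $H^0(C,\mathcal O_C)=\mathbb C$ for the reducible curve, at the cost of an induction and a final gluing step that the one-line global determinant argument renders unnecessary. Either way the conclusion stands, so there is no gap --- only an avoidable lengthening.
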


\subsection{Equivariant $\mathbb C^*$ action on the line bundle $\mathcal O(1)$ over $\mathbb P^1$: }

We know that $\mathbb P^1=\frac{\mathbb C^2\setminus(0,0)}{\mathbb C^*}$. Let us denote by $N$ the point $[1:0]$ and by $S$ the point $[0:1]$. Let us call $N$ the north pole and $S$ the south pole. The automorphism group of $\mathbb P^1$ fixing the two poles is
\[
\Bigg\{ \begin{pmatrix}
\frac{1}{\lambda} & 0 \\
0 & \lambda
\end{pmatrix}| \lambda\in \mathbb C^*\Bigg\}=\mathbb G_m.
\]
So each element \[
\begin{pmatrix}
\frac{1}{\lambda} & 0 \\
0 & \lambda
\end{pmatrix}
\] induces an automorphism $\phi_{\lambda}$ of $\mathbb P^1$. Moreover given any line bundle $L$ we see that $\phi_{\lambda}^*L:=L\times_{\mathbb{P}^1} \mathbb P^1 \cong L$. This induces two isomorphisms $L_{N}\rightarrow L_N$ and $L_S\rightarrow L_S$, where $L_N$ and $L_S$ denotes the fibres of $L$ at the points $N$ and $S$ respectively. If $L$ is the trivial bundle then these isomorphisms are, in fact, identity morphisms.

\begin{equation}
\begin{tikzcd}
\mathbb{P}^1\times \mathbb C \arrow{r}\arrow{d} & \mathbb{P}^1\times \mathbb C \arrow{d} && ([a:b], v)\mapsto ([\frac{1}{\lambda} a:\lambda b], v)\\
\mathbb P^1\arrow{r} & \mathbb{P}^1 && {[a:b]}\mapsto [\frac{1}{\lambda} a:\lambda b]
\end{tikzcd}
\end{equation}

If $L$ is the total space of $\mathcal O(-1)$ then we see that $L_N\rightarrow L_N$ is multiplication by $\frac{1}{\lambda}$ and $L_S\rightarrow L_S$ is multiplication by $\lambda$.

\begin{equation}
\begin{tikzcd}
L \arrow{r}\arrow{d} & L \arrow{d} && ([a:b], (\gamma a,\gamma b))\mapsto ([\frac{1}{\lambda} a:\lambda b], (\frac{1}{\lambda} \gamma a,\lambda \gamma b))\\
\mathbb P^1\arrow{r} & \mathbb{P}^1 && {[a:b]}\mapsto [\frac{1}{\lambda} a:\lambda b]
\end{tikzcd}
\end{equation}

Similarly, If $L$ is the total space of $\mathcal O(1)$ then we see that $L_N\rightarrow L_N$ is multiplication by
$\lambda$ and $L_S\rightarrow L_S$ is multiplication by $\frac{1}{\lambda}$.

\textbf{Convention: We fix a convention that for $\lambda\in \mathbb G_m$
the two induced isomorphisms $\mathcal O(1)_N\rightarrow \mathcal O(1)_N$, $\mathcal O(1)_S\rightarrow \mathcal O(1)_S$
are multiplications by $\lambda$ and $\frac{1}{\lambda}$ respectively.}
\subsection{Equivariant torus action on a Gieseker vector bundle:}\label{Act}

Let $Y_n$ be a Gieseker curve (Definition \ref{gis}),
which is union of $X$ and $R$, $R$ is a chain of $\mathbb{P}^1$s of length $n$.
Let us denote the nodes by $p_{_1},\dots,p_{_{n+1}}$. The extremal nodes on the chain $R$ are $p_1$ and $p_{_{n+1}}$.

The group $Aut(R, p_1, p_{n+1})$ of automorphisms of $R$ fixing the two points $\{p_1, p_{n+1}\}$ is isomorphic to $\times_{i=1}^n \mathbb{G}_m$.
The $i$-th copy of $\mathbb G_m$ is the automorphism of $R_i$ fixing the points $\{p_i, p_{i+1}\}$.
Let us consider an element $(\lambda_1,\dots,\lambda_n)\in \times_{i=1}^n \mathbb{G}_m$.
We declare, without loss of generality, that $p_i$ is north-pole (denote by N) of $R_{i-1}$ and
$R_i$ if $i$ is odd else we declare it south-pole (denote by S). It looks like the following picture:
\\ \\ \\ \\
\begin{tikzpicture}[overlay, scale=1, xshift= 1cm]
\draw node[yshift=-2ex]{$p_1$} node[yshift=2ex]{$\tiny{N}$} (0,0) -- (2,.5) node[yshift=2ex]{$S$}node[yshift=-2ex]{$p_2$}; \draw (1.5,.5) -- (3.5,0)node[yshift=-2ex]{$p_3$} node[yshift=2ex]{$\tiny{N}$}; \draw (3,0) -- (5,.5); \draw[dotted] (5,.25) -- (6,.25);\draw (6,0) -- (8,.5)node[yshift=-2ex]{$p_{_{2k-1}}$}node[yshift=2ex]{$\tiny{N}$}; \draw (7.5,.5) -- (9.5,0)node[yshift=-2ex]{$p_{_{2k}}$}node[yshift=2ex]{$\tiny{S}$}; \draw (9,0) -- (11,.5) node[yshift=-2ex]{$p_{_{2k+1}}$}node[yshift=2ex]{$\tiny{N}$}; \draw (10.5,.5) -- (12.5,0) ; \draw[dotted] (12.5,.25) -- (13.5,.25);
\end{tikzpicture}
\\ \\ \\ \\

Let us consider a Gieseker vector bundle $\mathcal E$ of rank $n$ over $Y_n$.
We can express it as a unique tuple $(V, W_1, \dots, W_n, \phi_1,\dots ,\phi_{n+1})$,
where $V$ is a vector bundle on $X, W_i$ is the vector bundle $\mathcal O^{n-1}\oplus \mathcal O(1)$
and $\phi_1: V_{p_1}\xrightarrow{\cong} W_{_{1,p_1}}$, $\phi_{n+1}:V_{p_{n+1}}\xrightarrow{\cong} W_{_{n,p_{n+1}}}$ and
$\phi_i: W_{i-1,p_{i}}\xrightarrow{\cong} W_{i,p_{i}}$ for $2\leq i\leq n$.

Let $Aut(Y_n/Y)$ denote the group of automorphisms of $Y_n$, which commute with the projection morphism to $Y$. Notice that $Aut(Y_n/Y)$ is also the subgroup of $Aut(Y_n)$, which consists of all the automorphisms which are identity on the sub curve $X$. Let us consider an element $\sigma\in Aut(Y_n/Y)$. We want to describe the Gieseker vector bundle $\sigma^*\mathcal E$ as a tuple.

Let us denote by $D_{_{i_1,\dots, i_k}}(\lambda_1,\dots,\lambda_k)$ the diagonal matrix whose $i_j$-th
diagonal element is $\lambda_j$ and $1$ everywhere else.

Then $\sigma^*(V, W_1, \dots, W_n, \phi_1,\dots ,\phi_{n+1})=(V, W_1, \dots, W_n, D_{1}(\lambda_1)\cdot \phi_1, D_{2}(\lambda_2^{-1})\cdot \phi_2\cdot D_{1}(\lambda_1), D_{3}(\lambda_3)\cdot \phi_3\cdot D_{2}(\lambda_2^{-1}),\dots, D_{2k}(\lambda_{2k}^{-1})\cdot \phi_{2k}\cdot D_{2k-1}(\lambda_{2k-1}),D_{2k+1}(\lambda_{2k+1})\cdot \phi_{2k+1}\cdot D_{2k}(\lambda_{2k}^{-1}), \dots, D_{n-1}(\lambda^{(-1)^{n}}_{n-1})\cdot \phi_{n-1}\cdot D_{n-2}(\lambda^{(-1)^{n-1}}_{n-2}), D_n(\lambda^{(-1)^{n+1}}_n)\cdot \phi_{n+1} )$,
where $\sigma|_{_R}=(\lambda_1,\cdots, \lambda_n)\in Aut(R, p_{_1}, p_{_{n+1}})$, the group of automorphisms of $R$ which fix the two points $p_{_1}$ and $p_{_{n+1}}$.

\begin{lema}\label{Action}
Let $\mathcal E$ be a rank $n$ Gieseker vector bundle on $Y_n$. Let $(V, W_1, \dots, W_n, \phi_1,\dots ,\phi_{n+1})$ be
the unique tuple representing $\mathcal E$, where $V$ is a vector bundle
on $X, W_i$ is the vector bundle $\mathcal O^{\oplus n+1}\oplus \mathcal O(1)$ and
$\phi_1: V_{p_1}\xrightarrow{\cong} W_{_{1,p_1}}$,
$\phi_{n+1}:V_{p_{n+1}}\xrightarrow{\cong} W_{_{n,p_{n+1}}}$ and
$\phi_i: W_{i-1,p_{i}}\xrightarrow{\cong} W_{i,p_{i}}$ for $2\leq i\leq n$.
Let us consider an element $\sigma \in Aut(Y_n/Y)$. Then
$\sigma^*\mathcal E\cong (V, W_1, \dots, W_n, \phi_1, \phi_2, \phi_3,\dots, \phi_{2k}, \dots, \phi_{n}, D_{1,2,\dots,n}(\lambda_1^2, \lambda_2^{-2},\dots, \lambda_n^{(-1)^{(n+1)}2})\phi_{n+1})$,
where $(\lambda_1,\cdots, \lambda_n)\in Aut(R, p_{_1}, p_{_{n+1}})$.
\end{lema}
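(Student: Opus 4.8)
The statement is a computational consequence of the tuple description of $\sigma^*\mathcal E$ given immediately before the lemma, combined with the freedom to change the connecting isomorphisms by the equivalence relation of Definition~\ref{EqGie} (equivalently, by acting with the tori $\prod_{i} Isom$ as in Remark~\ref{Iden}). First I would write out $\sigma^*\mathcal E$ as the tuple
\[
(V, W_1,\dots,W_n,\; D_1(\lambda_1)\phi_1,\; D_2(\lambda_2^{-1})\phi_2 D_1(\lambda_1),\; D_3(\lambda_3)\phi_3 D_2(\lambda_2^{-1}),\;\dots,\; D_n(\lambda_n^{(-1)^{n+1}})\phi_{n+1}),
\]
exactly as displayed in Subsection~\ref{Act}. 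The goal is to exhibit an isomorphism of this tuple with the tuple in which only $\phi_{n+1}$ is modified, by the diagonal matrix $D_{1,2,\dots,n}(\lambda_1^2,\lambda_2^{-2},\dots,\lambda_n^{(-1)^{n+1}2})$.

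\textbf{Key steps.} The isomorphism will be the identity on $V$ and a suitable automorphism $\psi_i$ of each $W_i$. Since $W_i \cong \mathcal O^{n-1}\oplus\mathcal O(1)$ and (by Lemma~\ref{Uni} and the convention in Remark~\ref{Iden}) we may take $W_i$ to be $\mathcal L_1\oplus\dots\oplus\mathcal L_n$ with $\mathcal L_i=\mathcal O(1)$ on $R_i$, a diagonal automorphism of $W_i$ acts on the two fibres $W_{i,p_i}$ and $W_{i,p_{i+1}}$ by \emph{related} scalars: by the $\mathbb G_m$-convention for $\mathcal O(1)$ fixed in the previous subsection, the $i$-th diagonal entry (the $\mathcal O(1)$-summand on $R_i$) acts at the two poles $p_i$, $p_{i+1}$ by $\mu$ and $\mu^{-1}$ (or $\mu^{-1}$ and $\mu$, according to the north/south labelling of $R_i$), while every other diagonal entry acts by the same scalar at both poles. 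The plan is to choose $\psi_1,\dots,\psi_n$ inductively so that at each intermediate node $p_i$ ($2\le i\le n$) the factors $D_i(\lambda_i^{\pm1})$ introduced by $\sigma^*$ are cancelled by the pole-values of $\psi_{i-1}$ and $\psi_i$, so that each middle gluing $\phi_i$ returns to its original form; because the $\mathcal O(1)$-summand $\mathcal L_i$ contributes a factor $\mu$ at $p_i$ and $\mu^{-1}$ at $p_{i+1}$, clearing $\lambda_i$ at $p_i$ forces a residual $\lambda_i^{\pm2}$-type contribution that propagates to the far end; concretely $\psi_i$ should be the diagonal automorphism of $W_i$ with $i$-th entry a scalar chosen to kill $\lambda_i$ at $p_i$ and all other entries $1$, after which one tracks what is left at $p_1$ and $p_{n+1}$. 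At the left end, $\phi_1$ is composed with $D_1(\lambda_1)$ and with $\psi_1|_{p_1}$; one checks these compose to the identity twist on $\phi_1$ (the factor $\lambda_1$ at $p_1$ coming from $\mathcal L_1$ cancels the $D_1(\lambda_1)$), so $\phi_1$ is unchanged. At the right end everything accumulated along the chain lands on $\phi_{n+1}$, and a bookkeeping of signs — each $\mathcal L_j$ contributes $\lambda_j$ at one pole of $R_j$ and $\lambda_j^{-1}$ at the other, with the alternating north/south pattern giving the signs $(-1)^{j+1}$, and the square arising because the residual scalar from $\psi_{j}$ at $p_{j+1}$ equals the pole-ratio $\lambda_j^{\mp2}$ rather than $\lambda_j^{\mp1}$ — yields exactly $D_{1,\dots,n}(\lambda_1^2,\lambda_2^{-2},\dots,\lambda_n^{(-1)^{n+1}2})\,\phi_{n+1}$.

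\textbf{Main obstacle.} The genuinely delicate part is not the existence of the $\psi_i$ (they are forced) but the correct tracking of the exponents and signs: one must carefully use the fixed $\mathbb G_m$-convention on $\mathcal O(1)$ from the previous subsection together with the declared north/south labelling of the $p_i$ (odd $i$ = north, even $i$ = south), and verify that the scalar one is obliged to introduce on the $\mathcal L_i$-summand to normalize the node $p_i$ produces a factor at $p_{i+1}$ that is the \emph{square} of the naive guess, so that the final twist on $\phi_{n+1}$ is $\lambda_i^{(-1)^{i+1}\cdot 2}$ and not $\lambda_i^{(-1)^{i+1}}$. I would do this by an explicit induction on the length of the chain: assuming the claim for the sub-chain $R_1\cup\dots\cup R_{n-1}$, peel off $R_n$, apply $\psi_n$ on $W_n$ to fix the node $p_n$, and observe that the residual scalar on the $\mathcal L_n$-summand at $p_{n+1}$ contributes the final $\lambda_n^{(-1)^{n+1}2}$ entry, while the inductive hypothesis supplies the first $n-1$ entries. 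The base case $n=1$ is a direct check: $\sigma^*\mathcal E=(V,W_1,D_1(\lambda_1)\phi_1, D_1(\lambda_1^{-1})\phi_2)$ is isomorphic via $\psi_1=D_1(\lambda_1^{-1})$ on $W_1$ to $(V,W_1,\phi_1, D_1(\lambda_1^{-2})\phi_2)$, matching $D_1(\lambda_1^{(-1)^{2}2})=D_1(\lambda_1^{2})$ up to the chosen orientation convention.
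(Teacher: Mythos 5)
Your overall strategy -- start from the tuple formula for $\sigma^*\mathcal E$ given in Subsection \ref{Act} and then normalize the gluings node by node with automorphisms $\psi_i$ of the $W_i$, pushing the residual twist to $p_{n+1}$ -- is the same as the paper's. But the key computational claim on which your bookkeeping rests is wrong, and it is exactly the point you yourself flag as the main obstacle. You assert that a diagonal automorphism of $W_i\cong\mathcal L_1\oplus\dots\oplus\mathcal L_n$ acts on the $\mathcal O(1)$-summand by $\mu$ at one pole and $\mu^{-1}$ at the other, invoking the $\mathbb G_m$-convention of the previous subsection. That convention describes the \emph{equivariant lift of a nontrivial automorphism} $\phi_\lambda$ of $\mathbb P^1$; a bundle automorphism of $\mathcal O(1)$ covering the \emph{identity} of $R_i$ is a global section of $\mathcal End(\mathcal O(1))\cong\mathcal O$, i.e.\ a single constant scalar acting identically at both poles. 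Since the lemma asserts an isomorphism of bundles on the fixed curve $Y_n$, the $\psi_i$ must cover the identity of $R_i$; if instead you take them to cover nontrivial automorphisms of $R_i$ (so as to get the $\mu,\mu^{-1}$ behaviour), you are replacing $\sigma$ by a different element of $Aut(R,p_1,p_{n+1})$ and the argument becomes circular. The paper's proof uses the torus $\prod_{i=1}^{n-1}Aut(\mathcal O)\times Aut(\mathcal O(1))\subset Aut(W_i)$ of \emph{constant} diagonal automorphisms, and the exponent $2$ arises for a different reason: in the displayed formula for $\sigma^*\mathcal E$ the twist $D_j(\lambda_j^{\pm1})$ already appears at \emph{both} nodes adjacent to $R_j$ (once on the target of the gluing at $p_j$ and once on the source of the gluing at $p_{j+1}$), so clearing it at $p_j$ by the constant $D_j(\lambda_j^{\mp1})$ contributes a second factor $D_j(\lambda_j^{\pm1})$ at $p_{j+1}$, whence $\lambda_j^{\pm2}$; iterating sweeps everything into $\phi_{n+1}$.

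The symptom of the error shows up in your base case: you start from $(V,W_1,D_1(\lambda_1)\phi_1,D_1(\lambda_1^{-1})\phi_2)$, which is not the specialization of the paper's formula (whose last entry for $n=1$ is $D_1(\lambda_1^{(-1)^{2}})\phi_2=D_1(\lambda_1)\phi_2$), and you land on $D_1(\lambda_1^{-2})\phi_2$ rather than $D_1(\lambda_1^{2})\phi_2$, dismissing the discrepancy as ``up to the chosen orientation convention.'' Since the entire content of this lemma is the precise exponents (they are what make the torus action computable in Proposition \ref{Immersion}, where one must solve $t'^+=t'^-$), an answer that is off by an inversion cannot be waved away. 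To repair the proof: take the paper's formula for $\sigma^*\mathcal E$ at face value, rewrite each middle gluing $D_{j+1}(\cdot)\phi_{j+1}D_j(\cdot)$ as $D_{j,j+1}(\cdot,\cdot)\phi_{j+1}$ using that the $\phi_{j+1}$ lie in the torus (Lemma \ref{Uni}, Remark \ref{Iden}), and then apply constant diagonal automorphisms of $W_1,\dots,W_n$ successively from left to right, tracking that each clearing step doubles the entry just cleared at the next node.
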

\begin{proof}
We can assume that the identifications $\phi_2,\dots,\phi_{n}$ are diagonal matrices with respect to a choice of basis
whose elements are elements of the corresponding fibres of $\mathcal{L}_{_{i,p_j}}$ (Lemma \ref{Uni} and Remark \ref{Iden}).
Since $\phi_i$'s are all elements of the torus they commute with $D_{i}$'s and therefore

$(\lambda_1,\dots,\lambda_n)^*(V, W_1, \dots, W_n, \phi_1,\dots ,\phi_{n+1})\hspace{-1cm}=\hspace{-1cm}(V, W_1, \dots, W_n, D_{1}(\lambda_1)\hspace{-1cm}\cdot \phi_1, D_{1,2}(\lambda_1,\lambda_2^{-1})\cdot \phi_2, D_{2,3}(\lambda_2^{-1},\lambda_3)\cdot \phi_3,\dots, D_{2k-1,2k}(\lambda_{2k-1},\lambda_{2k}^{-1})\cdot \phi_{2k},D_{2k,2k+1}(\lambda_{2k}^{-1},\lambda_{2k+1})\cdot \phi_{2k+1},\dots, D_{n-2,n-1}(\lambda^{(-1)^{n-1}}_{n-2},\lambda^{(-1)^n}_{n-1})\phi_{n}, D_n(\lambda^{(-1)^{n+1}}_n)\phi_{n+1})$.

The automorphism group of the vector bundle $\mathcal{O}^{n-1}\oplus \mathcal{O}(1)$ also contains the obvious torus $\prod_{i=1}^{n-1} Aut(\mathcal O)\times Aut(\mathcal O(1))\cong \times_{i=1}^n \mathbb{G}_m$. Therefore all $D_{_{i_1,\dots, i_k}}(\lambda_1,\dots,\lambda_k)$'s are in this automorphism group of $\mathcal{O}^{n-1}\oplus \mathcal{O}(1)$. Therefore

$
(\lambda_1,\dots,\lambda_n)^*(V, W_1, \dots, W_n, \phi_1,\dots ,\phi_{n+1})\hspace{-1cm}\cong \hspace{-1cm}(V, W_1, \dots, W_n, \phi_1, D_{1,2}(\lambda_1^2,\lambda_2^{-1})\cdot \phi_2, D_{2,3}(\lambda_2^{-1},\lambda_3)
\cdot \phi_3,\dots, D_{2k-1,2k}(\lambda_{2k-1},\lambda_{2k}^{-1})\cdot \phi_{2k},D_{2k,2k+1}(\lambda_{2k}^{-1},\lambda_{2k+1})\cdot \phi_{2k+1},\dots,D_{n-2,n-1}(\lambda^{(-1)^{n-1}}_{n-2},\lambda^{(-1)^n}_{n-1})\phi_{n},D_n(\lambda^{(-1)^{n+1}}_n)\phi_{n+1})
$

$\cong (V, W_1, \dots, W_n, \phi_1, \phi_2, D_{1,2,3}(\lambda_1^{2}, \lambda_2^{-2},\lambda_3)\cdot \phi_3,\dots, D_{2k-1,2k}(\lambda_{2k-1},\lambda_{2k}^{-1})\cdot \phi_{2k},D_{2k,2k+1}(\lambda_{2k}^{-1},\lambda_{2k+1})\cdot \phi_{2k+1},\dots,D_{n-2,n-1}(\lambda^{(-1)^{n-1}}_{n-2},\lambda^{(-1)^n}_{n-1})\phi_{n}, D_n(\lambda^{(-1)^{n+1}}_n)\phi_{n+1}))$

$\dots$

$\cong (V, W_1, \dots, W_n, \phi_1, \phi_2, \phi_3,\dots, \phi_{2k}, \dots, \phi_{n}, D_{1,2,\dots,n}(\lambda_1^2, \lambda_2^{-2},\dots, \lambda_n^{(-1)^{(n+1)}2})\phi_{n+1})$.
\end{proof}

\section{Description of the most singular loci}

\begin{lema}\label{Deg} Let $Y$ be an irreducible nodal curve with only one node and $\pi:X\rightarrow Y$ be the normalization. Let us denote the node by $y$ and its preimages by $\{y_1,y_2\}$.
\begin{enumerate}
\item Let $L$ be a vector bundle of rank $r$ on $X$. Then deg $\pi_*L=$ deg $L+r$.
\item Let $\mathcal L$ be a torsion free sheaf of rank $r$ on $Y$ of local type $\widehat{\mathcal L}_y\cong \widehat{\mathcal O}^{\oplus a}_y\oplus \widehat{m}^{\oplus b}_y$, where $m_y$ denotes the maximal ideal of $y$. Then deg $\frac{\pi^*\mathcal L}{\tt{torsion}}=$deg $\mathcal L-b$.
\end{enumerate}
\end{lema}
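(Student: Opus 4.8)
The plan is to reduce everything to Euler characteristics together with the conductor exact sequence $0\to\mathcal O_Y\to\pi_*\mathcal O_X\to\mathbb C_y\to 0$. Throughout I use the standard convention that for a coherent sheaf $\mathcal F$ of generic rank $r$ on an integral projective curve $C$ one sets $\deg\mathcal F:=\chi(C,\mathcal F)-r\,\chi(C,\mathcal O_C)$, which specialises to the usual degree when $C$ is smooth and $\mathcal F$ is locally free. The conductor sequence immediately gives $\chi(\mathcal O_X)=\chi(\mathcal O_Y)+1$ (equivalently, $X$ has genus one less than $Y$), a fact I will use twice.

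For part (1): since $\pi$ is finite it is affine, so $R^i\pi_*L=0$ for $i>0$ and hence $\chi(X,L)=\chi(Y,\pi_*L)$. Substituting into the definition of degree on $Y$ and rearranging, $\deg\pi_*L=\chi(X,L)-r\,\chi(\mathcal O_Y)=\deg L+r\,(\chi(\mathcal O_X)-\chi(\mathcal O_Y))=\deg L+r$. This part is routine once the conventions are in place.

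For part (2): I would set $L:=\pi^*\mathcal L/(\text{torsion})$ and first check that $L$ is a vector bundle of rank $r$ on $X$. Away from $y$ this is clear; near $y$, in the local model $\widehat{\mathcal O}_{Y,y}=\mathbb C[[u,v]]/(uv)$ with $\widehat{\mathcal O}_{X,\{y_1,y_2\}}=\mathbb C[[s]]\times\mathbb C[[t]]$, one computes that both $\pi^*\widehat{\mathcal O}_y$ and $\pi^*\widehat{m}_y$ have torsion-free quotient free of rank one on each branch, so the local type $\widehat{\mathcal O}_y^{\oplus a}\oplus\widehat{m}_y^{\oplus b}$ of $\mathcal L$ pulls back (modulo torsion) to $\widehat{\mathcal O}_X^{\oplus r}$. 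Next I would use the adjunction map $\beta\colon\mathcal L\to\pi_*L$ corresponding to the quotient $\pi^*\mathcal L\twoheadrightarrow L$: it is an isomorphism over $Y\setminus\{y\}$, so $\ker\beta$ is a torsion subsheaf of the torsion-free sheaf $\mathcal L$ and hence vanishes, while $Q:=\operatorname{coker}\beta$ is a skyscraper supported at $y$. Then $\deg\pi_*L=\deg\mathcal L+\operatorname{length}Q$, whereas part (1) gives $\deg\pi_*L=\deg L+r$; so once $\operatorname{length}Q=a$ is established we get $\deg L=\deg\mathcal L+a-r=\deg\mathcal L-b$, as wanted.

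The main obstacle is precisely this length count, $\operatorname{length}Q=a$. It is a local statement at $y$ and additive over the two summand types (both $\pi^*$ and ``$/(\text{torsion})$'' commute with finite direct sums, and $(\pi_*L)^{\wedge}_y\cong\widehat{\mathcal O}_{X,y_1}^{\oplus r}\oplus\widehat{\mathcal O}_{X,y_2}^{\oplus r}$), so it reduces to: (i) on a summand $\widehat{\mathcal O}_y$, the map $\beta$ is the conductor inclusion $\mathbb C[[u,v]]/(uv)\hookrightarrow\mathbb C[[s]]\times\mathbb C[[t]]$, which has colength $1$; and (ii) on a summand $\widehat{m}_y$, the map $\beta$ is already surjective, of colength $0$. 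Statement (i) is standard. For (ii) I would make $\beta$ explicit: writing $\widehat{m}_y=u\,\widehat{\mathcal O}_{Y,y}\oplus v\,\widehat{\mathcal O}_{Y,y}$ and identifying $\pi^*\widehat{m}_y/(\text{torsion})$ with $\widehat{\mathcal O}_X$ on each branch, one checks that under the adjunction the $u$-summand surjects onto the $\mathbb C[[s]]$-branch (with $u^{k+1}\mapsto s^{k}$) and the $v$-summand onto the $\mathbb C[[t]]$-branch, so $\beta$ hits all of $\mathbb C[[s]]\times\mathbb C[[t]]$. Granting (i) and (ii), $\operatorname{length}Q=a\cdot 1+b\cdot 0=a$, which completes the proof.
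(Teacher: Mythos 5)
Your proposal is correct and follows essentially the same route as the paper: part (1) via $\chi(\pi_*L)=\chi(L)$ for the affine map $\pi$ and $\chi(\mathcal O_X)=\chi(\mathcal O_Y)+1$, and part (2) via the exact sequence $0\to\mathcal L\to\pi_*\bigl(\pi^*\mathcal L/\mathrm{torsion}\bigr)\to Q\to 0$ with the length of $Q$ computed summand-by-summand from the two local facts that $\widehat{m}_y\to\pi_*(\pi^*\widehat{m}_y/\mathrm{torsion})$ is an isomorphism while $\widehat{\mathcal O}_y\to\pi_*(\pi^*\widehat{\mathcal O}_y/\mathrm{torsion})$ has colength $1$. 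You simply supply more detail (injectivity of $\beta$, the explicit local formulas) than the paper does.
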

\begin{proof}
Since the normalization map is affine, we have deg $\pi_*L=\chi(\pi_*L)-r\chi(\mathcal O_Y)=\chi(L)-r(\chi(\mathcal O_X)-1)=(\chi(L)-r\chi(\mathcal O_X))+r=$deg $L+r$.

For the second assertion, consider the following short exact sequence:
\begin{equation}
0\rightarrow \mathcal L\rightarrow \pi_*\big(\frac{\pi^*\mathcal L}{\tt{torsion}}\big)\rightarrow Q\rightarrow 0,
\end{equation}
where $Q$ is a torsion supported only at the node $y$.
It is enough to compute the dimension of $Q$ and for that purpose, it is enough to consider the local-analytic picture of the above short-exact sequence. Notice that the natural map

\begin{equation}
\widehat{m}_y\xrightarrow{\cong} \pi_*\big(\frac{\pi^*\widehat{m}_y}{\tt{torsion}}\big)
\end{equation}
is an isomorphism and that we have a short exact sequence
\begin{equation}
0\rightarrow \widehat{\mathcal O}_y\rightarrow \pi_*\big(\frac{ \pi^*\widehat{\mathcal O}_y}{\tt{torsion}}\big)\rightarrow k(y)\rightarrow 0.
\end{equation}
From these two local calculations it follows that dim $Q=a$. Now using $(1)$ we conclude that
$$
\text{deg}~~ \frac{\pi^*\mathcal L}{\tt{torsion}}=\text{deg}~~ \mathcal L-b.
$$
\end{proof}
\begin{lema}\label{Stab}
A rank $n$ Gieseker vector bundle $(Y_{_n}, \mathcal E)$ is stable if and only if $\widetilde{\mathcal E}:=\mathcal E|_{X}$ is
stable vector bundle of rank $n$ and degree $d-n$.
\end{lema}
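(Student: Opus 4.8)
The plan is to reduce the statement to two facts: first, that the torsion-free sheaf $(\pi_n)_*\mathcal E$ on $Y$ that governs stability (Definition \ref{stabilityDEF}) is canonically isomorphic to $q_*\widetilde{\mathcal E}$; and second, that for a vector bundle $V$ on the normalization $X$ the push-forward $q_*V$ is stable on $Y$ if and only if $V$ is stable. Granting these, stability of $(Y_n,\mathcal E)$ is by definition stability of $(\pi_n)_*\mathcal E\cong q_*\widetilde{\mathcal E}$, hence equivalent to stability of $\widetilde{\mathcal E}$; and the numerical part follows from Lemma \ref{Deg}(1), $d=\deg (\pi_n)_*\mathcal E=\deg q_*\widetilde{\mathcal E}=\deg\widetilde{\mathcal E}+n$, so $\deg\widetilde{\mathcal E}=d-n$ (one can also see this directly: $\mathcal E|_{R_i}\cong\mathcal O^{n-1}\oplus\mathcal O(1)$ by Remark \ref{type}, so $\deg(\mathcal E|_R)=n$, and degree is additive over the two components $X$ and $R$ of $Y_n$). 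Note that the hypotheses ``rank $n$'' and ``degree $d-n$'' on $\widetilde{\mathcal E}$ are automatic, so the real content is the stability equivalence.

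For the first fact I would argue locally at the node $p$. Put $W:=\mathcal E|_R$; restriction of $\mathcal E$ to the two extremal points of the chain gives surjections $q_1\colon H^0(R,W)\to\mathcal E_{p_1}$ and $q_{n+1}\colon H^0(R,W)\to\mathcal E_{p_{n+1}}$ (strict standardness). The kernel of $(q_1,q_{n+1})$ is $H^0\big(R,W(-p_1-p_{n+1})\big)$, which vanishes precisely because $(\pi_n)_*\mathcal E$ is torsion-free (Remark \ref{type}). On the other hand $W$ has degree $n$ on $R$ (one $\mathcal O(1)$ per component) and $\chi(\mathcal O_R)=1$, so Riemann--Roch gives $\chi(W)=2n$, hence $\dim H^0(R,W)\ge 2n$; combined with injectivity of $(q_1,q_{n+1})$ this forces $\dim H^0(R,W)=2n=\dim(\mathcal E_{p_1}\oplus\mathcal E_{p_{n+1}})$, so $(q_1,q_{n+1})$ is an isomorphism. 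Since $\pi_n$ collapses $R$ to $p$ and restricts to $q$ on $X$, the stalk of $(\pi_n)_*\mathcal E$ at $p$ consists of triples (a germ of $\widetilde{\mathcal E}$ along each of the two branches of $X$ through $p$, together with a section of $W$ on $R$) that match at $p_1$ and $p_{n+1}$ via the gluing data $\phi_1,\phi_{n+1}$; the isomorphism just established says that every pair of branch-germs extends uniquely over $R$, so this stalk is $\widetilde{\mathcal E}_{p_1}\oplus\widetilde{\mathcal E}_{p_{n+1}}=(q_*\widetilde{\mathcal E})_p$. Away from $p$ both sheaves equal $\widetilde{\mathcal E}$ (pushed forward), and therefore $(\pi_n)_*\mathcal E\cong q_*\widetilde{\mathcal E}$.

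For the second fact, recall $q$ is affine, so $q_*$ is exact; in particular for a subbundle $W\subset V$ on $X$ the quotient $q_*V/q_*W\cong q_*(V/W)$ is torsion-free, i.e. $q_*W$ is a \emph{saturated} subsheaf of $q_*V$. Conversely, any saturated $\mathcal G\subset q_*V$ of rank $r$ agrees, on $Y\setminus\{p\}\cong X\setminus\{y_1,y_2\}$, with a rank-$r$ subsheaf of $V$ whose saturation $W\subset V$ is a subbundle; then $q_*W$ and $\mathcal G$ are two saturated subsheaves of $q_*V$ of the same rank agreeing at the generic point, hence are equal. Thus $W\mapsto q_*W$ is a bijection between rank-$r$ subbundles of $V$ and rank-$r$ saturated subsheaves of $q_*V$ for every $0<r<n$. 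By Lemma \ref{Deg}(1), $\mu(q_*W)=(\deg W+r)/r=\mu(W)+1$ and likewise $\mu(q_*V)=\mu(V)+1$, so $\mu(q_*W)<\mu(q_*V)\iff\mu(W)<\mu(V)$. Since stability of a pure-rank torsion-free sheaf can be tested on saturated subsheaves (saturating can only increase the slope), $q_*V$ is stable iff $V$ is.

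The only point of real substance, and where I expect the main work to sit, is the isomorphism $(q_1,q_{n+1})$ at the node: this is exactly where the hypothesis that $\mathcal E$ is a Gieseker bundle on $Y_n$ of rank $n$ — i.e. that the length of the chain equals the rank — is used (for $k<n$ the analogous map is not surjective and $(\pi_k)_*\mathcal E$ acquires the mixed local type $\widehat{\mathcal O}^{\,n-k}\oplus\widehat m^{\,k}$, so the clean statement only holds at the extreme length). Everything else — exactness of $q_*$, the saturation bookkeeping on $X$ and $Y$, and the slope shift by $1$ — is formal, and the unique-extension assertion is essentially the admissibility of \cite[Proposition 5]{DC}.
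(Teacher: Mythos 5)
Your proof is correct, and its skeleton coincides with the paper's: both reduce stability of the Gieseker bundle to stability of the torsion-free sheaf $(\pi_n)_*\mathcal E$ (Definition \ref{stabilityDEF}), identify that sheaf with $q_*\widetilde{\mathcal E}$, and then transport stability across the normalization using the degree formulas of Lemma \ref{Deg}. The executions differ in both halves. For the identification $(\pi_n)_*\mathcal E\cong q_*\widetilde{\mathcal E}$, the paper pushes forward the restriction sequence $0\to\mathcal E|_R(-p_1-p_{n+1})\to\mathcal E\to\widetilde{\mathcal E}\to 0$ and checks that $R^0\pi_{n*}$ and $R^1\pi_{n*}$ of the first term vanish; your stalk computation, with the Euler-characteristic count $h^0(R,W)=2n$ and the resulting isomorphism $(q_1,q_{n+1})$, is an equivalent repackaging of the same vanishing $H^0(R,W(-p_1-p_{n+1}))=0$ from Remark \ref{type}. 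The more substantive divergence is in the stability transfer. You prove that $W\mapsto q_*W$ is a bijection between subbundles of $\widetilde{\mathcal E}$ and saturated subsheaves of $q_*\widetilde{\mathcal E}$, so every destabilizing candidate downstairs is a pushforward and the slope shifts uniformly by $+1$; this makes the two directions symmetric and uses only part (1) of Lemma \ref{Deg}. The paper instead treats the directions asymmetrically: a subbundle $L$ upstairs goes down via $\pi_*L$, while an arbitrary saturated subsheaf $\mathcal L$ downstairs, of local type $\mathcal O^{\oplus a}\oplus m^{\oplus b}$, is pulled up to $L':=\pi^*\mathcal L/\mathrm{torsion}$ with $\deg L'=\deg\mathcal L-b$ (part (2) of Lemma \ref{Deg}), and the slope inequality is closed using $b\le\mathrm{rk}\,\mathcal L$. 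Your route buys a cleaner, case-free correspondence of subobjects at the cost of the uniqueness-of-saturation argument showing $q_*W=\mathcal G$; the paper's route avoids that argument but needs the local-type bookkeeping. Both are complete proofs, and your identification of the role of the hypothesis (chain length equal to the rank, forcing $\deg(\mathcal E|_R)=n$ and the isomorphism at the node) matches where the paper uses it.
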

\begin{proof}

Consider the following exact sequence of $\mathcal{O}_{_{Y_n}}$-modules:

\begin{equation}
0\rightarrow \mathcal{E}|_{R}(-p_{_1}-p_{_{n+1}})\rightarrow \mathcal{E}\rightarrow \widetilde{\mathcal E}\rightarrow 0
\end{equation}

Let us denote the morphism $Y_{_n}\rightarrow Y$ by $\pi_{_n}$. Notice that $\pi_n|_{_X}=\pi$.

We have the following
exact sequence of $\mathcal{O}_{_{Y}}$-modules:

\begin{equation}
0\rightarrow \pi_{_{n*}}\mathcal{E}|_{R}(-p_{_1}-p_{_{n+1}})\rightarrow \pi_{_{n*}}\mathcal{E}\rightarrow \pi_{_{n*}}\widetilde{\mathcal E}\rightarrow R^{^1} \pi_{_{n*}} \mathcal{E}|_{R}(-p_{_1}-p_{_{n+1}})\dots
\end{equation}

Now it is straightforward to check that $\pi_{_{n*}}\mathcal{E}|_{R}(-p_{_1}-p_{_{n+1}})=0$ and
$R^{^1} \pi_{_{n*}} \mathcal{E}|_{R}(-p_{_1}-p_{_{n+1}})=0$.
Therefore we have $\pi_{_{n*}} \mathcal{E}\cong \pi_* \widetilde{\mathcal E}$.
Let us denote this torsion free sheaf by $\mathcal{F}$.

By definition, a Gieseker vector bundle is stable if and only if the corresponding torsion free sheaf is stable ( Definition \ref{stabilityDEF}).
So it is enough to prove that $\mathcal{F}$ is stable if and only if $\widetilde{\mathcal E}$ is stable. Notice degree of $\widetilde{\mathcal E}$ is $d-n$ and $(n,d-n)=1$.

Also note that we have an injective morphism $\frac{\pi^*\mathcal{F}}{Torsion}\hookrightarrow \widetilde{\mathcal E}$.
By degree calculation, we see that their degrees are same, therefore $\frac{\pi^*\mathcal{F}}{Torsion}\cong \widetilde{\mathcal E}$.

For any subbundle $L$ of $\widetilde{\mathcal E}$, we get a saturated torsion free sub-sheaf $\pi_*L$ of $\mathcal{F}$.
Given a saturated torsion-free subsheaf $\mathcal{L}$, we have a bundle $L':=\frac{\pi^*\mathcal{L}}{Torsion}$
which is a subsheaf of $\widetilde{\mathcal E}$. If the torsion-free subsheaf $\mathcal{L}$ is of local-type $\mathcal{O}^{\oplus a}\oplus m^{\oplus b}$,
then $\text{deg}~ L'=\text{deg}~\mathcal{L}-b$ (statement (2) of Lemma \ref{Deg}).

Suppose now that $\mathcal{F}$ is stable. Let $L$ be a subbundle of $\widetilde{\mathcal E}$. Then $\pi_* L$ is a saturated subsheaf of $\mathcal{F}$.
So we have
$\frac{deg \pi_{*} L}{rk (L)}< \frac{deg \mathcal{F}}{rk (\mathcal{F})}\implies \frac{deg L+rk (L)}{rk (L)}< \frac{deg \widetilde{\mathcal E}+n}{n}\implies \frac{deg L}{rk(L)}< \frac{deg \widetilde{\mathcal E}}{n}$.

Now suppose $\widetilde{\mathcal E}$ is stable. Let $\mathcal{L}$ be a saturated subsheaf of $\mathcal F$ of local-type $\mathcal{O}^{\oplus a}\oplus m^{\oplus b}$,
where $a+b=rk(\mathcal{L})$.
Then we have
$\frac{deg L'}{rk (L')}< \frac{deg \widetilde{\mathcal E}}{n}\implies \frac{deg \mathcal{L}-b}{rk (\mathcal{L})}< \frac{deg \widetilde{\mathcal E}}{n}\implies \frac{deg \mathcal{L}}{rk(\mathcal{L})}< \frac{deg \widetilde{\mathcal E}}{n}+\frac{b}{rk(\mathcal{L})}= \frac{deg \widetilde{\mathcal E}\cdot rk(\mathcal{L})+bn}{n\cdot rk(\mathcal{L})}\leq \frac{rk(\mathcal{L})\cdot (deg \widetilde{\mathcal E}+n)}{n\cdot rk(\mathcal{L})}=\frac{deg \mathcal{F}}{n}$.
Therefore $\mathcal{F}$ is stable.
\end{proof}

\begin{rema}\label{parabolic weights}
From Lemma \ref{Stab}, it follows that if the Gieseker bundle $\mathcal E$ is stable then $\tilde{E}$ is also stable. Thus there exists a pair of sets of small rational weights $0<\alpha_{i}^{p_1}<1$ and $0<\beta_{i}^{p_{n+1}}<1$ $(\text{for}~~i=1,\dots, n)$ such that $(\tilde{E}, F_{\bullet}, G_{\bullet})$ is a stable parabolic bundle for any quasiparabolic structures $F_{\bullet}$ and $G_{\bullet}$ with respect to the weights $\{\alpha_{i}^{p_1}\}$ and $\{\beta_{i}^{p_{n+1}}\}$. So, in particular, $\tilde{E}$ with the parabolic structure induced by the Gieseker bundle $\mathcal E$ (remark \ref{IP}) is also stable( see \cite[Proposition 2.6]{DT I}).
\end{rema}

Let $\mathcal{M}_{_{\tt{VB}_{n,d-n}}}$ be the moduli space of stable vector bundles of rank $n$ and
degree $d-n$ on $X$.
Let $\mathcal{U}$ be a universal bundle over $X\times \mathcal{M}_{_{\tt{VB}_{n,d-n}}}$ and
$\mathcal{U}_{_{p_i}}:=\mathcal{U}|_{p_i\times \mathcal{M}_{_{\tt{VB}_{n,d-n}}}}$, $i=1,n+1$,
where $p_1:=X\cap R_1$ and $p_{n+1}:=X\cap R_{n}$. Let $FL_n(\mathcal{U}_{_{p_i}})$ be the
flag bundles whose fibres are flag varieties of complete flags. By Remark \ref{parabolic weights}, there exists small sets of weights $\{\alpha_{i}^{p_1}\}$
and $\{\beta_{i}^{p_{n+1}}\}$
such that the variety
$FL_n(\mathcal{U}_{_{p_1}})\times_{_{\mathcal{M}_{_{\tt{VB}_{n,d-n}}}}} FL_n(\mathcal{U}_{_{p_{n+1}}})$ is isomorphic to the moduli space
of stable parabolic bundles over $X$ with parabolic structures at $p_1$ and $p_{n+1}$ with complete flag types and parabolic weights $\{\alpha_{i}^{p_1}\}$
and $\{\beta_{i}^{p_{n+1}}\}$.

\begin{thm}\label{main1}
There is a natural isomorphism $f:\mathcal{M}^n\to FL_n(\mathcal{U}_{_{p_1}})\times_{_{\mathcal{M}_{_{\tt{VB}_{n,d-n}}}}} FL_n(\mathcal{U}_{_{p_2}})$.
\begin{proof}

Let $\mathcal{W}\to \mathcal{Y}^{^{\tt{st}}}$ be the universal Gieseker curve and $\mathcal{E}\to \mathcal{W}$ a universal Gieseker bundle. Let us denote by $\mathcal W^n$ the restriction of $\mathcal W$ to the closed subscheme $\mathcal Y^{n,\tt{st}}$. We denote the restriction of $\mathcal{E}$ on $\mathcal{W}^n$, the schematic pre-image of
$\mathcal{Y}^{n,\tt{st}}$ also by $\mathcal{E}$. Then by the proof of Proposition \ref{Strat}, $\mathcal{Y}^{n,\tt{st}}$ is a smooth variety.
Thus, by using Proposition \ref{flag1}, we get
$(\widetilde{\mathcal{E}}:=\mathcal{E}|_{_{X\times \mathcal{Y}^{n,\tt{st}}}},F_{\bullet},G_{\bullet})$, a
family of parabolic bundles parametrised by $\mathcal{Y}^{n,\tt{st}}$. From Lemma \ref{Stab} and Remark \ref{parabolic weights}, it follows that it is, in fact, a family of stable parabolic bundles. Therefore, by the universal property of
the moduli space of stable parabolic bundles we get a morphism:
\[\mathcal{Y}^{n,\tt{st}}\to FL_n(\mathcal{U}_{_{p_1}})\times_{_{\mathcal{M}_{_{VB_{n,d-n}}}}} FL_n(\mathcal{U}_{_{p_2}}).\]
Using Lemma \ref{mor}, we see that this morphism descends to a morphism-
\[f:\mathcal{M}^n\to FL_n(\mathcal{U}_{_{p_1}})\times_{_{\mathcal{M}_{_{VB_{n,d-n}}}}} FL_n(\mathcal{U}_{_{p_2}}).\]

Now note that the dimension of $FL_n(\mathcal{U}_{_{p_1}})\times_{_{\mathcal{M}_{_{VB_{n,d-n}}}}} FL_n(\mathcal{U}_{_{p_2}})$ is
$n^2(g-2)+1+n(n-1)=n^2(g-1)+1-n=\mbox{dim}(\mathcal{M}^{n})$. Since both the varieties are smooth and projective and have
same dimension if we can show that $f$ is injective then by Zariski's main theorem it follows that $f$ is an
isomorphism. The injectivity of $f$ will follow from Proposition \ref{Immersion}.
\end{proof}
\end{thm}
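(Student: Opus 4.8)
The plan is to produce $f$ first as a morphism and then upgrade it to an isomorphism by a dimension count together with injectivity on closed points. To construct $f$, I would work over the smooth parameter variety $\mathcal{Y}^{n,\tt{st}}$, which is smooth by Proposition \ref{Strat} and carries the restriction $\mathcal{W}^n$ of the universal Gieseker curve together with a universal Gieseker bundle $\mathcal{E}$, every fibre of $\mathcal{W}^n\to\mathcal{Y}^{n,\tt{st}}$ being of type $Y_n$. Restricting $\mathcal{E}$ to the component $X\times\mathcal{Y}^{n,\tt{st}}$ gives $\widetilde{\mathcal{E}}$, and Proposition \ref{flag1} canonically equips $\widetilde{\mathcal{E}}$ with two full flags $F_\bullet,G_\bullet$ along the sections $p_1,p_{n+1}$, hence with the structure of a family of quasi-parabolic bundles over $\mathcal{Y}^{n,\tt{st}}$. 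By Lemma \ref{Stab} each $\widetilde{\mathcal{E}}_t$ is a stable bundle of rank $n$ and degree $d-n$, and by Remark \ref{parabolic weights} there are small weights $\{\alpha_i^{p_1}\},\{\beta_i^{p_{n+1}}\}$ for which every member of this family is a stable parabolic bundle; the universal property of the moduli of stable parabolic bundles then yields a morphism $\mathcal{Y}^{n,\tt{st}}\to FL_n(\mathcal{U}_{p_1})\times_{\mathcal{M}_{VB_{n,d-n}}}FL_n(\mathcal{U}_{p_{n+1}})$. Finally Lemma \ref{mor}, which says equivalent Gieseker bundles on $Y_n$ induce isomorphic parabolic triples, shows this morphism is $PGL(m)$-invariant, so it descends to the required $f$ on the GIT quotient $\mathcal{M}^n$.

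To conclude, I would compare dimensions. On the target, $\mathcal{M}_{VB_{n,d-n}}$ is the moduli of stable bundles on the genus $g-1$ curve $X$, of dimension $n^2(g-2)+1$, and each complete-flag bundle adds $\dim FL_n(\mathbb{C}^n)=n(n-1)/2$, so the fibre product has dimension $n^2(g-2)+1+n(n-1)=n^2(g-1)+1-n$, which is exactly $\dim\mathcal{M}^n$ by Proposition \ref{Strat}. Both sides are smooth, projective and irreducible (the target being a tower of flag bundles over the irreducible $\mathcal{M}_{VB_{n,d-n}}$). Hence, once $f$ is known to be injective on closed points, it is automatically surjective (an injective morphism of irreducible projective varieties of equal dimension is dominant, hence surjective), and Zariski's main theorem upgrades a bijective morphism of smooth projective varieties of the same dimension to an isomorphism. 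This reduces the theorem to injectivity of $f$.

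The main obstacle is exactly this injectivity, which is the converse of Lemma \ref{mor}: if the parabolic triples $(\widetilde{E},F_\bullet,G_\bullet)$ and $(\widetilde{E}',F'_\bullet,G'_\bullet)$ attached to $(Y_n,\mathcal{E})$ and $(Y_n,\mathcal{E}')$ are isomorphic, then $\mathcal{E}$ and $\mathcal{E}'$ are equivalent in the sense of Definition \ref{EqGie}. The idea is to reconstruct the Gieseker bundle from its triple: take $\widetilde{E}$ on $X$, take the unique strictly standard rank $n$ bundle $W$ on the chain $R$ (Lemma \ref{Uni}), which carries canonical full flags at its extremal points coming from the $\mathcal{O}(1)$-summands, and glue $\widetilde{E}$ to $W$ by isomorphisms $\phi_1,\phi_{n+1}$ carrying $F_\bullet,G_\bullet$ to those canonical flags. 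Such gluings exist but are unique only up to the Borel subgroups $B^+,B^-$ stabilizing the flags, and the crux is that this ambiguity is precisely absorbed by $Aut(\mathcal{E})$ and by the torus $Aut(Y_n/Y)$: Lemma \ref{3.8} identifies the image of $Aut(\mathcal{E})\to Aut(W_{p_1})\times Aut(W_{p_{n+1}})$ with $q^{-1}(\Delta)$, while Lemma \ref{Action} computes the action of $Aut(Y_n/Y)$ on the gluing datum, so that any two flag-compatible gluings differ by an automorphism of $Y_n$ over $Y$ composed with an automorphism of $\mathcal{E}$. Consequently the reconstruction is well defined up to equivalence, and an isomorphism of parabolic triples, lifted by the identity on $R$ and corrected by a suitable torus element, yields an equivalence of Gieseker bundles. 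I expect this reconstruction-and-torus bookkeeping to be the delicate point, and it is exactly the content isolated as Proposition \ref{Immersion} and invoked above.
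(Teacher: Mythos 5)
Your proposal is correct and follows essentially the same route as the paper: construct the family of stable parabolic bundles over the smooth variety $\mathcal{Y}^{n,\tt{st}}$ via Proposition \ref{flag1}, Lemma \ref{Stab} and Remark \ref{parabolic weights}, descend through Lemma \ref{mor}, match dimensions, and reduce to the injectivity statement of Proposition \ref{Immersion}. Your extra sketch of how that injectivity is proved (reconstruction of the Gieseker bundle plus the Borel/torus bookkeeping of Lemmas \ref{3.8} and \ref{Action}) accurately reflects the paper's own proof of Proposition \ref{Immersion}.
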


\begin{prop}\label{Immersion}
Let $\mathcal{E}$ and $\mathcal{E}'$ be two Gieseker vector bundles of rank $n$ on $Y_n$ such that the
corresponding parabolic triples $(V, F_{\bullet}, G_{\bullet})$ and $(V', F'_{\bullet}, G'_{\bullet})$, constructed in Proposition \ref{flag1}
are isomorphic.
Then the Gieseker bundles are equivalent.
\end{prop}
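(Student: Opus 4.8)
The plan is to reconstruct the Gieseker bundle $\mathcal{E}$ on $Y_n$ from the parabolic triple $(V, F_\bullet, G_\bullet)$, thereby showing the construction of Proposition \ref{flag1} is essentially injective on equivalence classes. First I would observe that by Lemma \ref{Uni} and Remark \ref{Iden} there is, up to isomorphism, only one strictly standard rank $n$ bundle $W$ on the chain $R$ of length $n$, so the data of a Gieseker bundle on $Y_n$ amounts to the tuple $(V, W, \phi_1, \phi_{n+1})$ where $V = \mathcal{E}|_X$, $W = \mathcal{E}|_R$ (rigidified as in Remark \ref{Iden}), and $\phi_1\colon V_{p_1} \xrightarrow{\cong} W_{p_1}$, $\phi_{n+1}\colon V_{p_{n+1}} \xrightarrow{\cong} W_{p_{n+1}}$ are the gluing isomorphisms. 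Since $(V,F_\bullet,G_\bullet) \cong (V',F'_\bullet,G'_\bullet)$, we may as well assume $V=V'$ and $F_\bullet=F'_\bullet$, $G_\bullet=G'_\bullet$; and $W\cong W'$ by Lemma \ref{Uni}. So everything is pinned down except the gluing data, and the task is to produce an isomorphism $\mathcal{E}\xrightarrow{\cong}\mathcal{E}'$ commuting with the projection to $Y$, i.e.\ an automorphism of $Y_n$ fixing $X$ together with a compatible bundle isomorphism, possibly after twisting by a line bundle pulled back from a point (Definition \ref{EqGie}).

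Next I would analyze precisely how much freedom the gluing data carries. On the $X$ side we may compose $\phi_1$ with any automorphism of $W_{p_1}$ that preserves the flag $F_\bullet$ — that is, with any element of the Borel $B^+$ of Lemma \ref{3.8} — and similarly $\phi_{n+1}$ with any element of $B^-$, and these changes can be realized by automorphisms of $W$ precisely when the torus parts match on the diagonal, by Lemma \ref{3.8}: the image of $\mathrm{Aut}(W)\to \mathrm{Aut}(W_{p_1})\times \mathrm{Aut}(W_{p_{n+1}})$ is $q^{-1}(\Delta)$. On top of this, the automorphisms $\sigma\in \mathrm{Aut}(Y_n/Y) \cong \times_{i=1}^n \mathbb{G}_m$ act on the tuple as computed in Lemma \ref{Action}: the net effect on the rigidified data is to replace $\phi_{n+1}$ by $D(\lambda_1^2,\lambda_2^{-2},\dots,\lambda_n^{(-1)^{n+1}2})\phi_{n+1}$ while leaving $\phi_1$ (and the interior gluings) unchanged. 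The key point is that the flags $F_\bullet$ and $G_\bullet$ were built out of $W$ together with $\phi_1$ and $\phi_{n+1}$ via the span/intersection recipe of Proposition \ref{flag1}; unwinding that recipe, the flag $F_\bullet$ depends only on $W$ and the interior gluing isomorphisms (which are fixed by the rigidification), hence is the same for $\mathcal{E}$ and $\mathcal{E}'$ automatically, while $G_\bullet$ records exactly the ``interesting'' part of $\phi_{n+1}$.

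The heart of the argument, then, is the following linear-algebra statement: two gluing isomorphisms $\phi_{n+1}, \phi'_{n+1}\colon V_{p_{n+1}}\xrightarrow{\cong} W_{p_{n+1}}$ produce the same flag $G_\bullet$ on $V_{p_{n+1}}$ (equivalently, the same flag on $W_{p_{n+1}}$ after transport) if and only if $\phi'_{n+1} = b^-\cdot t\cdot \phi_{n+1}$ for some $b^-\in B^-$ and some diagonal torus element $t$ that can be absorbed either into an automorphism of $W$ (via $q^{-1}(\Delta)$, after a matching adjustment at $p_1$ that is invisible to $F_\bullet$ since $B^+$ stabilizes $F_\bullet$) or into the $\mathrm{Aut}(Y_n/Y)$-action of Lemma \ref{Action}, or into a scalar twist by a line bundle from a point. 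Concretely: the recipe of Proposition \ref{flag1} sends $\phi_{n+1}$ to the flag whose $i$-th step is $q_{n+1}q_n^{-1}(\cdots)$ pulled back through $\phi_{n+1}$, and one checks that this flag determines $\phi_{n+1}$ up to left multiplication by the stabilizer of the flag, which is exactly $B^-$, composed with the torus ambiguity coming from the rigidification choices. I would then assemble: given that $G_\bullet = G'_\bullet$, choose $b^-\in B^-$ with $\phi'_{n+1} = b^-\phi_{n+1}$; decompose $b^- = t\cdot u^-$ with $t\in T$, $u^-\in U^-$; realize $u^-$ and the ``non-diagonal'' part by an honest automorphism of $W$ using Lemma \ref{3.8} (note $F_\bullet$ is preserved since $U^-\subset \mathrm{Aut}(W_{p_{n+1}})$ and we only need to match the torus on the $p_1$ side, where $B^+$ already stabilizes $F_\bullet$); and realize the remaining diagonal discrepancy $t$ by a suitable $\sigma\in \mathrm{Aut}(Y_n/Y)$ via Lemma \ref{Action}, up to a global scalar which is a twist by a line bundle pulled back from a point. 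This exhibits the required equivalence $\mathcal{E}\sim \mathcal{E}'$ in the sense of Definition \ref{EqGie}.

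The main obstacle I anticipate is the bookkeeping in the last paragraph: making sure the torus element produced on the $p_{n+1}$ side can be split into a piece absorbed by $\mathrm{Aut}(W)$ (which forces a matching torus element on the $p_1$ side, harmless because $B^+$ stabilizes $F_\bullet$) and a piece realized by $\mathrm{Aut}(Y_n/Y)$ — and verifying, via Lemma \ref{Action}, that the squares $\lambda_i^{\pm 2}$ appearing there together with an overall scalar sweep out enough of the torus to close the gap. One must check the parity signs $(-1)^{i+1}$ do not obstruct surjectivity of $(\lambda_1,\dots,\lambda_n)\mapsto (\lambda_1^2,\lambda_2^{-2},\dots)$ onto the relevant quotient torus (it does, since squaring is surjective on $\mathbb{G}_m$ over $\mathbb{C}$), and that the interior gluings $\phi_2,\dots,\phi_n$ remain unaffected, which is exactly the content of the simplification in Lemma \ref{Action}. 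Once this is in place, the equivalence follows and hence $f$ in Theorem \ref{main1} is injective.
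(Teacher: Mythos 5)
Your proposal is correct and follows essentially the same route as the paper: rigidify $W$ on the chain via Lemma \ref{Uni}, observe that the discrepancies $\phi'_1\psi_{p_1}\phi_1^{-1}$ and $\phi'_{n+1}\psi_{p_{n+1}}\phi_{n+1}^{-1}$ lie in $B^+$ and $B^-$, kill the mismatch of their torus parts using the $\mathrm{Aut}(Y_n/Y)$-action of Lemma \ref{Action} (where surjectivity of $\lambda\mapsto\lambda^{\pm 2}$ is indeed the point the paper leaves implicit), and realize the resulting pair by an automorphism of $W$ via the $q^{-1}(\Delta)$ description in Lemma \ref{3.8}. The only cosmetic difference is that the paper never needs the line-bundle twist from Definition \ref{EqGie} that you hold in reserve.
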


\begin{proof}
Let us write the Gieseker bundle $\mathcal{E}$ as a tuple $(V, W, \phi_1,\phi_{n+1})$
where $V$ is a vector bundle on the normalization $X$ and
$W=(W_1, \dots, W_n, \phi_2,\dots,\phi_{n})$ is a vector bundle on $R$.
From the remark \ref{type} it follows that $W_{_i}$ is the bundle $\mathcal{O}^{n-1}\oplus\mathcal{O}(1)$ on $R_i$ and
$\phi_1:V_{p_1}\xrightarrow{\cong} W_{_{1,p_1}}$, $\phi_{i+1}:W_{i,p_{i+1}}\xrightarrow{\cong} W_{_{i+1,p_{i+1}}}$ and
$\phi_{n+1}:V_{p_{n+1}}\xrightarrow{\cong} W_{_{n,p_{n+1}}}$.
Similarly we write the Gieseker bundle $\mathcal{E}'$ as a tuple $(V', W, \phi'_1, \phi'_{n+1})$.
Notice we have kept the bundle $(W_1, \dots, W_n, \phi_2,\dots,\phi_{n})$ fixed on the chain $R$.
This we can do because of lemma \ref{Uni}. We write $W=\oplus_{i=1}^n \mathcal{L}_{_i}$ as in lemma \ref{Uni}. Moreover, we can assume that the identifications $\phi_2,\dots,\phi_{n}$ are diagonal matrices with respect to a choice of basis
whose elements are elements of the corresponding fibres of $\mathcal{L}_{_{i,p_j}}$.

Now suppose that the corresponding parabolic triples are isomorphic
i.e., $(V, F_{\bullet}, G_{\bullet})\xrightarrow{\psi} (V', F'_{\bullet}, G'_{\bullet})$.
By composition we get isomorphisms $\phi'_{1}\circ \psi_{p_1}\circ \phi_1^{-1}: W_{p_1}\rightarrow W_{p_1}$ and
$\phi'_{n+1}\circ \psi_{p_{n+1}}\circ \phi_{n+1}^{-1}: W_{p_{n+1}}\rightarrow W_{p_{n+1}}$.
We denote them by $b^+$ and $b^-$ respectively. It is easy to see that $b^+$ and $b^-$ fixes the corresponding full flags.
Therefore $b^+\in B^+$ and $b^-\in B^-$. We write $b^+=t^+\cdot u^+$ and $b^-=t^-\cdot u^-$,
where $t^+, t^-\in T$ and $u^+\in U^+$, $u^-\in U^-$.
A priori $t^+$ and $t^-$ may not be the same. We want to change $\mathcal{E}$ with-in its equivalence class by the action of this automorphism of the chain so that the new $t^+$ and $t^-$ coincide.
In Lemma \ref{mor}, we have shown that two equivalent Gieseker bundles produce the same parabolic tuple.

Now choose any $(\lambda_1,\dots,\lambda_n)\in \times_{i=1}^n \mathbb{G}_m=:Aut(R;p_1,p_{n+1})$. Then

$(\lambda_1,\dots,\lambda_n)^* \mathcal E=(\lambda_1,\dots,\lambda_n)^* (V, W_1, \dots, W_n, \phi_1, \phi_2, \phi_3,\dots, \phi_{n}, \phi_{n+1})$

$\cong (V, W_1, \dots, W_n, \phi_1, \phi_2, \phi_3,\dots, \phi_{2k}, \dots, \phi_{n}, D_{1,2,\dots,n}(\lambda_1^2, \lambda_2^{-2},\dots, \lambda_n^{(-1)^{(n+1)}2})\phi_{n+1})$ (Lemma \ref{Action}).

By definition, it lies in the equivalence class of $\mathcal{E}$.
By composition we get isomorphisms $\phi'_{1}\circ \psi_{p_1}\circ \phi_1^{-1} : W_{p_1}\rightarrow W_{p_1}$ and
$\phi'_{n+1}\circ \psi_{p_{n+1}}\circ D_{1,2,\dots,n}(\lambda_1^2, \lambda_2^{-2},\dots, \lambda_n^{(-1)^{(n+1)}2})^{-1}\cdot \phi_{n+1}^{-1}: W_{p_{n+1}}\rightarrow W_{p_{n+1}}$. We set $b'^+:=b^+$ and $b'^-:=D_{1,2,\dots,n}(\lambda_1^2, \lambda_2^{-2},\dots, \lambda_n^{(-1)^{(n+1)}2})^{-1}\cdot b^-$.
Then $b'^+\in B^+$ and $b'^-\in B^-$. Now $b'^+=t'^+\cdot u'^+$ and $b'^-=t'^-\cdot u'^-$,
where $t'^+=t^+$, $u'^+=u^+$, $t'^-:=D_{1,2,\dots,n}(\lambda_1^2, \lambda_2^{-2},\dots, \lambda_n^{(-1)^{(n+1)}2})^{-1}\cdot t^-$
and $u'^-\in U^-$. We can choose $(\lambda_1,\dots,\lambda_n)$ in such a way that $t'^+= t'^-$.
Therefore by Lemma \ref{3.8}, we achieve an automorphism $\sigma'=(\lambda_1,\cdots,\lambda_n)$ of $W$
whose evaluation at points $p_1$
and $p_{_{n+1}}$ are $b'^+$ and $b'^-$. Let $\sigma$ be the automorphism of $X_n$ obtained by gluing the identity morphism on $X$
with the automorphism $\sigma'$ on $R$.
Then we have $\sigma^* \mathcal E\cong \mathcal E'$.
Therefore $\mathcal E$ is equivalent to $\mathcal E'$.
\end{proof}
%\begin{coro}\label{Des}
%We have the following isomorphism of varieties:
%\begin{equation}
%\mathcal{M}^n \xrightarrow{\cong} \frac{GL_n}{B}\times_{\mathcal{M}_{_{VB_{_{n,d-n}}}}}\frac{GL_n}{B}
%\end{equation}
%\end{coro}
%\begin{proof}

%Using the Lemma \ref{Immersion}, we conclude that the morphism \ref{Mor} is injective. Also notice that
%both the varieties are proper, smooth and are of same dimension (Theorem \ref{Strat}).
%The dimension of
%$\frac{GL_n}{B}\times_{\mathcal{M}_{_{VB_{_{n,d-n}}}}}\frac{GL_n}{B}=n^2(g-2)+1+n(n-1)=n^2(g-1)+1-n=\text{dim}~\mathcal{M}_{_{\tt{Gie}_n}}$.

%Therefore using Zariski's Main theorem, we conclude that the morphism is an isomorphism.
%\end{proof}

\section{Torelli type theorem}
\begin{thm}\label{main}
Let $Y$ and $Y'$ are two irreducible projective nodal curves of genus $g\geq 2$ with single nodes $p$ and $p'$.
Let $\mathcal{M}_Y$ and $\mathcal{M}_{Y'}$ be the moduli space of stable Gieseker vector bundles of rank $2$ and
degree $d$ and $(2,d)=1$. If $\mathcal{M}_Y\cong \mathcal{M}_{Y'}$, then $Y\cong Y'$.

Suppose $Y$ and $Y'$ are curves of genus strictly greater than $3$. Let $\mathcal{M}_{_{Y,n,d}}$ and $\mathcal{M}_{_{Y',n,d}}$ be the moduli space of stable Gieseker vector bundles of rank $n(\geq 2)$ and
degree $d$ and $(n,d)=1$. If $\mathcal{M}_{_{Y,n,d}}\cong \mathcal{M}_{_{Y',n,d}}$, then $Y\cong Y'$.
\end{thm}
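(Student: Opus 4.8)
The plan is to strip away intrinsic structure from the two moduli spaces, step by step, until one is reduced to a statement covered by the parabolic Torelli theorems of \cite{VIS} and \cite{DT II}. First, the property of carrying normal crossing singularities, and hence the natural stratification $\mathcal M^0\supset\mathcal M^1\supset\dots\supset\mathcal M^n$ by successive singular loci described in Lemma \ref{NormalCrossings} and Proposition \ref{Strat}, is purely intrinsic to the variety $\mathcal M_Y=G(n,d)$; therefore any isomorphism $F\colon\mathcal M_Y\xrightarrow{\sim}\mathcal M_{Y'}$ carries $\mathcal M^i_Y$ onto $\mathcal M^i_{Y'}$ for every $i$, and since in both cases the last non-empty stratum is the $n$-th one (Remark \ref{type}, Proposition \ref{Strat}), $F$ restricts to an isomorphism $\mathcal M^n_Y\cong\mathcal M^n_{Y'}$ of the most singular loci. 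By Theorem \ref{main1}, $\mathcal M^n_Y$ is naturally isomorphic to the moduli space $\mathcal P(X;p_1,p_2)$ of stable parabolic bundles of rank $n$ on the normalization $q\colon X\to Y$ with complete flags along $q^{-1}(p)=\{p_1,p_2\}$ and with the small generic weights furnished by Remark \ref{parabolic weights}, and similarly for $Y'$; hence $F$ produces an isomorphism $\mathcal P(X;p_1,p_2)\cong\mathcal P(X';p_1',p_2')$.

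Next I would pass to fixed determinant. Tensoring by degree-zero line bundles defines an action of $\mathrm{Pic}^0(X)$ on $\mathcal P(X;p_1,p_2)$ lying over the $n$-th power isogeny of $\mathrm{Pic}^0(X)$, so that the determinant morphism $\det\colon\mathcal P(X;p_1,p_2)\to\mathrm{Pic}^{d-n}(X)$ is an isotrivial fibration whose fibre over $[L]$ is the moduli space $\mathcal P^{L}(X;p_1,p_2)$ of stable parabolic bundles with fixed determinant $L$. Because $(n,d-n)=(n,d)=1$, the description of $\mathcal P^{L}(X;p_1,p_2)$ as a tower of flag bundles over the (simply connected) fixed-determinant bundle moduli space $\mathcal M^{L}_{\mathrm{VB}}$ exhibits these fibres as simply connected, so $\det$ is, up to a translation of the target, the Albanese morphism of $\mathcal P(X;p_1,p_2)$. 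Since the Albanese is functorial, the abstract isomorphism $\mathcal P(X;p_1,p_2)\cong\mathcal P(X';p_1',p_2')$ matches the fibres of $\det$ with those of $\det'$, and restricting it to a single fibre gives an isomorphism $\mathcal P^{L}(X;p_1,p_2)\cong\mathcal P^{L'}(X';p_1',p_2')$ of moduli spaces of stable parabolic bundles with fixed determinant and complete flags at two points.

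The parabolic Torelli theorem now applies — \cite{VIS} in the rank $2$ case and \cite{DT II} in the general rank case, under the genus hypotheses of the theorem — yielding an isomorphism $\phi\colon X\xrightarrow{\sim}X'$ of curves which carries the support of the parabolic structure onto that of $X'$, i.e. $\phi(\{p_1,p_2\})=\{p_1',p_2'\}$. Finally, since $Y$ is obtained from $X$ by gluing the two points $p_1$ and $p_2$, and $Y'$ from $X'$ by gluing $p_1'$ and $p_2'$, the isomorphism $\phi$ is compatible with these identifications and descends to an isomorphism $Y\cong Y'$, which is the assertion.

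I expect the crux of the argument to be the passage to fixed determinant: one must recognise $\det$ as an intrinsic morphism (the Albanese map), and this rests precisely on the coprimality hypothesis, which forces the fixed-determinant parabolic moduli spaces to be simply connected and thereby forces every isomorphism of the full parabolic moduli spaces to respect the determinant fibration. A secondary technical point is to verify that the small generic weights supplied by Remark \ref{parabolic weights} lie in the range for which the Torelli statements of \cite{VIS} and \cite{DT II} are proved; for full flags at two points and sufficiently small generic weights this should present no difficulty.
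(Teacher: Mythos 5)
Your proposal is correct and follows essentially the same route as the paper: stratify by successive singular loci, identify the most singular stratum with the full-flag parabolic moduli space via Theorem \ref{main1}, cut down to fixed determinant using simple connectedness of the fibres of $Det$, and invoke \cite{VIS} and \cite{DT II}. Your only packaging difference is phrasing the determinant step via functoriality of the Albanese morphism, whereas the paper argues directly that $Det'\circ\Phi$ restricted to a fibre $Det^{-1}(\xi)$ is constant because a morphism from a simply connected projective variety to an abelian variety is constant --- the same underlying fact.
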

\begin{proof}
Using Proposition \ref{Strat}, we have stratifications (by closed sub-varieties) of $\mathcal{M}_{_{Y,n,d}}$ and $\mathcal{M}_{_{Y',n,d}}$:

\begin{equation}
\mathcal{M}_{_{Y,n,d}}^0:=\mathcal{M}_{_{Y,n,d}}\supset \mathcal{M}_{_{Y,n,d}}^1 \supset \dots \supset \mathcal{M}_{_{Y,n,d}}^n\supset \mathcal{M}_{_{Y,n,d}}^{n+1}:=\emptyset,
\end{equation}
where singular locus of $\mathcal{M}_{_{Y,n,d}}^k$ is $\mathcal{M}_{_{Y,n,d}}^{k+1}$ for $k=0,\dots, n$,
and
\begin{equation}
\mathcal{M}_{_{Y',n,d}}^0:=\mathcal{M}_{_{Y',n,d}}\supset \mathcal{M}_{_{Y',n,d}}^1 \supset \dots\supset \mathcal{M}_{_{Y',n,d}}^n\supset \mathcal{M}_{_{Y',n,d}}^{n+1}:=\emptyset,
\end{equation}
where singular locus of $\mathcal{M}_{_{Y',n,d}}^k$ is $\mathcal{M}_{_{Y',n,d}}^{k+1}$ for $k=0,\dots, n$.

Then the isomorphism $\mathcal{M}_{_{Y,n,d}}\cong \mathcal{M}_{_{Y',n,d}}$ induces isomorphisms $\mathcal{M}_{_{Y,n,d}}^k\cong \mathcal{M}_{_{Y',n,d}}^k$ for all $k$. In particular, $\mathcal{M}_{_{Y,n,d}}^n\cong \mathcal{M}_{_{Y',n,d}}^n$ i.e., the "most singular loci" of these two varieties are isomorphic.

Using Theorem \ref{main1}, we have
\[FL_n(\mathcal{U}_{p_1})\times_{\mathcal{M}_{_{VB_{_{X, n, d-n}}}}}FL_n(\mathcal{U}_{p_2})\cong FL_n(\mathcal{U}'_{{p_1}'})\times_{\mathcal{M}_{_{VB_{_{X', n, d-n}}}}}FL_n(\mathcal{U}'_{{p_2}'}),\]

where $X$ and $X'$ denote the normalizations of $Y$ and $Y'$ respectively and $\{p_1, p_2\}$ and $\{p'_1, p'_2\}$ are the pre-images of the nodes $p$ and $p'$ under the normalization maps.
Let us denote this isomorphism by $\Phi$.
Let us denote the determinant morphisms $\mathcal{M}_{_{VB_{_{X, n, d-n}}}}\rightarrow Pic_{_{X, d-n}}$ and
$\mathcal{M}_{_{VB_{_{X', n, d-n}}}}\rightarrow Pic_{_{X', d-n}}$ by $Det$ and $Det'$. Thus we have morphisms
$FL_n(\mathcal{U}_{p_1})\times_{\mathcal{M}_{_{VB_{_{X, n, d-n}}}}}FL_n(\mathcal{U}_{p_2})\to \mathcal{M}_{_{VB_{_{X, n, d-n}}}}\stackrel{Det}{\rightarrow }Pic_{_{X, d-n}}$
and $FL_n(\mathcal{U}'_{{p'_1}})\times_{\mathcal{M}_{_{VB_{_{X', n, d-n}}}}}FL_n(\mathcal{U}'_{p'_2})\to \mathcal{M}_{_{VB_{_{X', n, d-n}}}}\stackrel{Det'}{\rightarrow }Pic_{_{X', d-n}}$.
By abuse of notation, we again denote these morphisms by $Det$ and $Det'$.
Let $\xi\in Pic_{_{X, d-n}}$.
Note that $Det^{-1}(\xi)\simeq FL_n(\mathcal{U}_{p_1})\times_{\mathcal{M}_{_{VB_{_{X, n, d-n,\xi}}}}}FL_n(\mathcal{U}_{p_2})$.
Since $\mathcal{M}_{_{VB_{_{X, n, d-n,\xi}}}}$ is a simply connected variety and
$FL_n(\mathcal{U}_{p_1})\times_{\mathcal{M}_{_{VB_{_{X, n, d-n,\xi}}}}}FL_n(\mathcal{U}_{p_2})$
is a $FL_n\times FL_n$- bundle over it therefore it is also a simply connected variety, where $FL_n$ denotes the variety of full-flags of a $n$-dimensional vector space. Now consider
the morphism $Det'\circ\Phi: Det^{-1}(\xi)\rightarrow Pic_{_{X', d-n}}$.
Since $Pic_{_{X', d-n}}$ is an abelian variety and $Det^{-1}(\xi)$ is a simply connected projective variety
the map $Det'\circ\Phi$ is actually a constant map. Let us denote the image by $\xi'$.
Therefore we have
$FL_n(\mathcal{U}_{_{p_1}})\times_{_{\mathcal{M}_{_{VB_{n,d-n, \xi}}}}} FL_n(\mathcal{U}_{_{p_2}})=Det^{-1}(\xi)\cong Det'^{-1}(\xi')=FL_n(\mathcal{U}'_{p_1})\times_{\mathcal{M}_{_{VB_{_{X', n, d-n,\xi'}}}}}FL_n(\mathcal{U}'_{p_2})$.

Using \cite[Theorem 3.2]{VIS} and \cite[Theorem 4.6 (Torelli Theorem)]{DT II}, we conclude that there is an isomorphism $X\rightarrow X' $ such that the image of $\{p_1,p_2\}$ is $\{p_1',p_2'\}$. Therefore we have $Y\cong Y'$.
\end{proof}

\end{document}